\renewcommand*{\eqref}[1]{%
	\hyperref[{#1}]{\textup{\tagform@{\ref*{#1}}}}%
}
\newtheorem{theorem}{Theorem}[section]
\newtheorem{lemma}[theorem]{Lemma}
\newtheorem{proposition}[theorem]{Proposition}
\newtheorem{corollary}[theorem]{Corollary}
\theoremstyle{definition}
\newtheorem{definition}[theorem]{Definition}
\newtheorem{example}[theorem]{Example}
\theoremstyle{remark}
\newtheorem{remark}[theorem]{Remark}
\numberwithin{equation}{section}
\newcommand{\IFS}{(\gamma_1,\ldots,\gamma_d)}
\newcommand{\hpot}{(h_1,\ldots,h_d)}
\newcommand{\rn}{\mathbb{R}}
\newcommand{\cn}{\mathbb{C}}
\newcommand{\nn}{\mathbb{N}}
\newcommand{\scj}{\subseteq}
\newcommand{\CF}{\mathcal{F}}
\newcommand{\CG}{\mathcal{G}}
\newcommand{\CL}{\mathcal{L}}
\newcommand{\CO}{\mathcal{O}}
\newcommand{\CS}{\mathcal{S}}
\newcommand{\CT}{\mathcal{T}}
\newcommand{\Sp}{\operatorname{Sp}}
\newcommand{\Tr}{\operatorname{Tr}}
\begin{document}

\title[KMS states on C*-algebras of self-similar sets]{KMS states for generalized gauge actions on C*-algebras associated with self-similar sets}

\author{Gilles G. de Castro}
\address{Departamento de Matem\'atica, Universidade Federal de Santa Catarina, 88040-970 Florian\'opolis SC, Brazil.}
\email{gilles.castro@ufsc.br}
\thanks{Partially supported by CAPES}
\subjclass[2020]{Primary: 46L55. Secondary: 37A55, 46L30, 46L40.}
\keywords{KMS states, gauge action, iterated function systems, self-similar sets, Ruelle-Perron-Frobenius Theorem.}

\begin{abstract}
Given a self-similar $K$ set defined from an iterated function system $\Gamma=\IFS$ and a set of function $H=\{h_i:K\to\rn\}_{i=1}^d$ satisfying suitable conditions, we define a generalized gauge action on Kawjiwara-Watatani algebras $\CO_\Gamma$ and their Toeplitz extensions $\CT_\Gamma$. We then characterize the KMS states for this action. For each $\beta\in(0,\infty)$, there is a Ruelle operator $\CL_{H,\beta}$ and the existence of KMS states at inverse temperature $\beta$ is related to this operator. The critical inverse temperature $\beta_c$ is such that $\CL_{H,\beta_c}$ has spectral radius 1. If $\beta<\beta_c$, there are no KMS states on $\CO_\Gamma$ and $\CT_\Gamma$; if $\beta=\beta_c$, there is a unique KMS state on $\CO_\Gamma$ and $\CT_\Gamma$ which is given by the eigenmeasure of $\CL_{H,\beta_c}$; and if $\beta>\beta_c$, including $\beta=\infty$, the extreme points of the set of KMS states on $\CT_\Gamma$ are parametrized by the elements of $K$ and on $\CO_\Gamma$ by the set of branched points.
\end{abstract}

\maketitle

\section{Introduction}

Several examples of fractals are self-similar sets and can be built using an iterated function system \cite{B1,Ed1,F1}. In \cite{KW1}, Kajiwara and Watatani introduced C*-algebras associated with self-similar sets arising from iterated function systems. One of their main goals with their construction was to codify the structure of branched points of the iterated function system inside the algebra. Indeed, they showed that, under certain conditions, the branched points are reflected in the structure of the KMS states for the gauge action on their algebras \cite{IKW,KW2}.

For C*-algebras arising from dynamical systems, there is also an interest in studying generalized gauge actions \cite{E3,IK,KR,PWY}. In these works there were no branched points and they were able to prove the existence and uniqueness of KMS states by relying on a version of the Ruelle-Perron-Frobenius theorem.

The first main goal of this paper is to define generalized gauge actions on Kajiwara-Watatani algebras and study their KMS states. We rely on the Ruelle-Perron-Frobenius theorem for iterated function systems proved by Fan and Lau in \cite{FL}. In \cite{IKW,KW2}, they showed that if the iterated function system consists of $d$ maps, then there is a unique KMS state for inverse temperature $\beta=\log d$ which is given by the Hutchinson measure on the self-similar set; if $\beta > \log d$ then the extreme points of the set of KMS states are parametrized by the branched points; and if $\beta<\log d$, there are no KMS states. Because of the change of behaviour on the set of KMS states at $\log d$, we say that $\log d$ is a critical inverse temperature. For the generalized gauge action, we consider a family of Ruelle operators indexed by $\beta > 0$ (see Equation~\ref{eq:L}). For each $\beta$, we denote by $\rho(\beta)$ the spectral radius of the corresponding Ruelle operator. We show now that the inverse critical temperature $\beta_c$ is the unique $\beta$ satisfying $\rho(\beta_c)=1$. For $\beta=\beta_c$ there is also a unique KMS state which is given by the eigenmeasure of the Ruelle operator; for $\beta<\beta_c$ there are no KMS states; and for $\beta>\beta_c$ as before, the extreme points of the set of KMS states is again parametrized by the branched points (see Theorem~\ref{thm:KMS}).

Kajiwara-Watatani algebras were built from C*-correspondences and there is a Toeplitz version of their algebra. Although the general results of Laca-Neshveyev \cite{LN1} also deals with Toeplitz-Cuntz-Pimsner algebras, the analysis of KMS states on the Topelitz algebras of self-similar sets was not considered in \cite{IKW,KW2}. Our second main goal is then to describe KMS states for the generalized gauge action on the Topelitz algebras of iterated function systems. The same inverse critical temperature $\beta_c$ applies in the Toeplitz algebra. The main difference is that for $\beta>\beta_c$, the extreme points of the set of KMS states is parametrized not only of the branched points but all points of the self-similar set (see Theorem~\ref{thm:KMS}).

The structure of the paper is as follows. In Section \ref{sec:prelim}, we recall some basic definitions and results on iterated function systems, self-similar sets, Cuntz-Pimsner algebra, Kajiwara-Watatani algebras and KMS states on Cuntz-Pimsner algebras. In Section \ref{sec:opg}, we define a generalized gauge action on Kajiwara-Watatani algebras and give conditions for it to satisfy the hypothesis of the Laca-Neshevey theorem about KMS states on Cuntz-Pimsner algebras. Finally, we study KMS states for the generalized gauge action both on Kajiwara-Watani algebras and their Toeplitz version in Section \ref{sec:KMS}

\subsection*{Acknowledgements:} This paper contains revised and extended results of the author's PhD thesis that were previously unpublished. The author would like to thank his supervisors Artur O. Lopes, Jean Renault and Ruy Exel. The author would also like to thank Daniel Gonçalves for useful discussions.

\section{Preliminaries}\label{sec:prelim}

\subsection{Iterated function systems}

In this section, we review some definitions and results of the basic theory of iterated function systems and self-similar sets (see for instance \cite{B1,Ed1,F1}). Fix $(M,\rho)$ be a compact metric space.

\begin{definition}
We say that a function $\gamma :M\rightarrow M$ is a \emph{contraction} if there exists $c\in (0,1)$ such that $\rho (\gamma
(x),\gamma (y))\leq c\rho (x,y)$.
\end{definition}

\begin{definition}
An \emph{iterated function system} over $M$ is a finite set of continuous
functions $\Gamma=\left( \gamma_{i}:M\rightarrow M\right)_{i=1}^{d}$. We say that the iterated function system is \emph{contractive} if all functions are contractions.
\end{definition}

\begin{proposition}
Given a contractive iterated function system $\Gamma=\IFS$, there is a unique compact non-empty subset $K\ $of $M$ such that
\begin{equation}
K=\bigcup _{i=1}^{d}\gamma _{i}(K).  \label{eq:InvSet}
\end{equation}
\end{proposition}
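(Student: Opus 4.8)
The plan is to realize the desired set $K$ as the fixed point of the \emph{Hutchinson operator} and invoke the Banach contraction principle. Let $\CK(M)$ be the family of non-empty compact subsets of $M$, equipped with the Hausdorff metric $\rho_H$ determined by $\rho$; recall that $\rho_H(C,D)\le\varepsilon$ precisely when $C$ is contained in the closed $\varepsilon$-neighbourhood of $D$ and $D$ is contained in the closed $\varepsilon$-neighbourhood of $C$. The first step is to record (or prove) the standard fact that, because $(M,\rho)$ is compact, $(\CK(M),\rho_H)$ is a complete metric space.

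Next, define $F\colon\CK(M)\to\CK(M)$ by $F(A)=\bigcup_{i=1}^{d}\gamma_i(A)$. This is well defined: each $\gamma_i$ is continuous, so $\gamma_i(A)$ is compact whenever $A$ is, and a finite union of non-empty compact sets is non-empty and compact. I would then prove that $F$ is a contraction on $(\CK(M),\rho_H)$. Two elementary properties of the Hausdorff metric do the work: first, if $c_i\in(0,1)$ is a contraction constant for $\gamma_i$, then $\rho_H(\gamma_i(A),\gamma_i(B))\le c_i\,\rho_H(A,B)$, which follows at once from the defining inequality $\rho(\gamma_i(x),\gamma_i(y))\le c_i\,\rho(x,y)$; second, $\rho_H\!\left(\bigcup_{i=1}^{d}C_i,\bigcup_{i=1}^{d}D_i\right)\le\max_{1\le i\le d}\rho_H(C_i,D_i)$, which is immediate from the neighbourhood description of $\rho_H$. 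Setting $c:=\max_{1\le i\le d}c_i\in(0,1)$ and combining these yields $\rho_H(F(A),F(B))\le c\,\rho_H(A,B)$ for all $A,B\in\CK(M)$.

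Finally, apply the Banach fixed point theorem to the contraction $F$ on the complete metric space $(\CK(M),\rho_H)$: there exists a unique $K\in\CK(M)$ with $F(K)=K$, that is, a unique non-empty compact $K\subseteq M$ satisfying $K=\bigcup_{i=1}^{d}\gamma_i(K)$, which is exactly~\eqref{eq:InvSet}.

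I expect the only real obstacle to be the completeness of $(\CK(M),\rho_H)$: although classical, it genuinely requires an argument, producing the candidate limit of a Cauchy sequence of compact sets as the set of subsequential limits of selections and checking that it is non-empty, compact, and the Hausdorff limit. The contraction estimate, by contrast, reduces to the two short lemmas isolated above, and uniqueness is automatic from Banach's theorem.
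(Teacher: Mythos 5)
Your proposal is correct and is precisely the classical Hutchinson argument (Hausdorff metric, completeness of the hyperspace, contractivity of the Hutchinson operator, Banach fixed point theorem); the paper states this proposition without proof as a standard fact from the iterated-function-system literature it cites, and that literature proves it exactly this way. No gaps: the two Hausdorff-metric lemmas you isolate and the completeness of $(\CK(M),\rho_H)$ for compact $M$ are all routine and suffice.
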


The set $K$ as above is called the \emph{attractor} of the iterated function system and we say it is a \emph{self-similar set}.

Note that, because of \eqref{eq:InvSet}, the attractor is invariant by all $\gamma _{i}$ and we can restrict the iterated function system to its attractor. From now on, we assume that $M=K$.

Consider the set $\Sigma_d =\left\{ 1,\ldots ,d\right\} ^{\mathbb{N}}$ with the
product topology, $\sigma :\Sigma_d \rightarrow \Sigma_d $ the left shift and, for each $i=1,\ldots,d$, the function $\sigma _{i}:\Sigma_d \rightarrow \Sigma_d $ given by
\[
\sigma _{i}(i_{0},i_{1,}\ldots )=(i,i_{0},i_{1,}\ldots ).
\]%
This system is called the \emph{full shift}.

\begin{proposition}
\label{prop:CodeMap}Let $\Gamma=\IFS$ be a contractive iterated function system and $K$ its attractor. Then there is a continuous
surjection $F:\Sigma_d \rightarrow K$ such that $F\circ \sigma _{i}=\gamma_{i}\circ F$. This map is given by the formula
\[
F(i_{0},i_{1,}\ldots )=\lim_{n\rightarrow \infty }\gamma _{i_{0}}\circ\cdots \circ \gamma _{i_{n}}(x)
\]
for an arbitrary $x\in K$.
\end{proposition}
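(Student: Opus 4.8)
The plan is to establish four things, in order: (i) the limit in the displayed formula exists and is independent of the chosen basepoint $x\in K$, so $F$ is well defined; (ii) $F$ is continuous; (iii) $F\circ\sigma_i=\gamma_i\circ F$; and (iv) $F$ is surjective onto $K$.

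For (i) I would fix a common contraction ratio $c\in(0,1)$ for $\gamma_1,\ldots,\gamma_d$ and, for a fixed $\omega=(i_0,i_1,\ldots)\in\Sigma_d$, consider the compact sets $K_n:=\gamma_{i_0}\circ\cdots\circ\gamma_{i_n}(K)$. The two key observations are that $K_{n+1}\subseteq K_n$, because $\gamma_{i_{n+1}}(K)\subseteq K$ by \eqref{eq:InvSet} and applying $\gamma_{i_0}\circ\cdots\circ\gamma_{i_n}$ preserves inclusions, and that $\operatorname{diam}(K_n)\le c^{\,n+1}\operatorname{diam}(K)\to 0$, since a composition of $n+1$ maps of ratio $\le c$ has ratio $\le c^{\,n+1}$. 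Hence $\bigcap_n K_n$ consists of a single point, which we name $F(\omega)$; and since $\gamma_{i_0}\circ\cdots\circ\gamma_{i_n}(x)\in K_n$ for every $x\in K$, the limit in the formula exists, equals that point, and does not depend on $x$.

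Step (ii) follows from the same estimate: if two sequences share their first $n+1$ coordinates, then their images under $F$ both lie in the corresponding $K_n$ and are therefore within $c^{\,n+1}\operatorname{diam}(K)$ of each other, which gives (uniform) continuity for the product topology on $\Sigma_d$. Step (iii) is immediate from continuity of $\gamma_i$: one pulls $\gamma_i$ out of the limit in the defining formula to get $F(\sigma_i(\omega))=\gamma_i(F(\omega))$. For (iv), note that $F(\Sigma_d)$ is a non-empty compact — hence closed — subset of $K$, being the continuous image of the compact space $\Sigma_d$; writing each sequence as $\sigma_{i_0}(i_1,i_2,\ldots)$ shows $\Sigma_d=\bigcup_{i=1}^d\sigma_i(\Sigma_d)$, and combining this with (iii) yields $F(\Sigma_d)=\bigcup_{i=1}^d\gamma_i(F(\Sigma_d))$, so by the uniqueness part of the preceding proposition $F(\Sigma_d)=K$. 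I do not expect a genuine obstacle here: the only point demanding care is the nested–shrinking argument in (i), which is precisely what makes $F$ well defined, and it is worth emphasizing that the attractor equation \eqref{eq:InvSet} is what feeds both the nesting in (i) and the surjectivity in (iv).
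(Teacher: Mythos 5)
Your proof is correct. The paper states this proposition without proof, as a standard fact from the theory of iterated function systems, and your argument is exactly the canonical one: the nested compact sets $K_n=\gamma_{i_0}\circ\cdots\circ\gamma_{i_n}(K)$ with $\operatorname{diam}(K_n)\leq c^{n+1}\operatorname{diam}(K)$ (where $c<1$ is a common ratio, available since the system is finite) give well-definedness and uniform continuity, the intertwining follows by pulling $\gamma_i$ through the limit, and surjectivity follows neatly from the fact that $F(\Sigma_d)$ is a nonempty compact set satisfying $F(\Sigma_d)=\bigcup_{i=1}^d\gamma_i(F(\Sigma_d))$ together with the uniqueness of the attractor. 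No gaps.
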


The following definitions are used in the study of C*-algebras associated with self-similar sets and their KMS states (see \cite{KW1,Ma1}).

\begin{definition}
	Let $\Gamma=\IFS$ be an iterated function system, we define the following sets 
	$$B\IFS:=\{x\in K\mid \exists y\in K\ \exists i\neq j:x=\gamma_i(y)=\gamma_j(y)\};$$
	$$C\IFS:=\{y\in K\mid \exists i\neq j:\gamma_i(y)=\gamma_j(y)\}.$$
	$$I(x):=\{i\in \{1,...,d\};\exists y\in K:x=\gamma _{i}(y)\}.$$
	We call the points of $B\IFS$ \emph{branched points} and the points of $C\IFS$ \emph{branched values}. And we say that $\Gamma$ satisfies the \emph{finite branch condition} if $C\IFS$ is finite.
\end{definition}

\begin{definition}
	Let $\Gamma=\IFS$ be an iterated function system. For $x\in K$ and $n\in\mathbb{N}$, we set the \emph{$n$-th orbit of $x$} by
	$$O_n(x)=\{\gamma_{i_1}\circ\cdots\circ\gamma_{i_n}(x)\in K:1\leq i_1,\ldots,i_n\leq d\},$$
	and define the \emph{orbit of $x$} by $O(x)=\cup_{n=0}^{\infty}O_n(x)$. We will say that the iterated function system $\Gamma$ satisfies the \emph{escape condition} if for every $x\in K$, there exists $y\in O(x)$ such that $O(y)\cap C\IFS=\emptyset$.	
\end{definition}

\subsection{Cuntz-Pimsner algebras}
	
	We briefly recall the key elements for the construction of Cuntz-Pimsner algebras (\cite{K1,P1}) that will be used throughout the paper. For that, fix $A$ a C*-algebra.
	\begin{definition}
		A \emph{(right) Hilbert C*-module over $A$} is a (right-)$A$-module $X$ with a sesquilinear map $\left<\ ,\ \right>:X\times X\to A$ such that:
		\begin{enumerate}[(i)]
			\item $\left<\xi,\eta a\right>=\left<\xi,\eta\right>a$;
			\item $(\left<\xi,\eta\right>)^{\ast}=\left<\eta,\xi\right>$;
			\item $\left<\xi,\xi\right>\geq 0$;
			\item $X$ is complete with respect to the norm $\|\xi\|_2=\|\left<\xi,\xi\right>\|^{1/2}$
		\end{enumerate}
		for $a\in A$ and $\xi,\eta\in X$.
		We say that $X$ is \emph{full} if $\left<X,X\right>$ is dense in $A$.
	\end{definition}
		Let $X$ be a Hilbert C*-module and denote by $\CL(X)$ the space of adjointable operators in $X$. We note that $\CL(X)$ is a C*-algebra.	
	For $\xi,\eta\in X$ we define an operator $\theta_{\xi,\eta}:X\to X$ by $\theta_{\xi,\eta}(\zeta)=\xi\left<\eta,\zeta\right>$. This is an adjointable operator and we denote by $\mathcal{K}(X)$ the closed subspace of $\CL(X)$ generated by all $\theta_{\xi,\eta}$.
	\begin{definition}
		A \emph{C*-correspondence} over $A$ is a Hilbert C*-module $X$ together with a C*-homomorphism $\phi:A\to\CL(X)$.
	\end{definition}
	Let ($X$,$\phi$) be a C*-correspondence over $A$ and, for simplicity, suppose that $\phi$ is faithful. We denote by $J_X$ the ideal $\phi^{-1}(\mathcal{K}(X))$.
	\begin{definition}\label{def:PimsnerCovRep}
		A pair $(\iota,\psi)$ of maps $\iota:A\to B$, $\psi:X\to B$, where $B$ is a C*-algebra and $\iota$ a C*-homomorphism, is said to be a \emph{representation of $X$} if:
		\begin{itemize}
			\item[(i)] $\psi(\phi(a)\xi b)=\iota(a)\psi(\xi)\iota(b)$;
			\item[(ii)] $\psi(\xi)^{\ast}\psi(\eta)=\iota(\left<\xi,\eta\right>)$;
		\end{itemize}
		for $a,b\in A$, $\xi,\eta\in X$ and $c\in J_X$.
		If moreover
		\begin{itemize}
			\item[(iii)] $(\psi,\iota)^{(1)}(\phi(c))=\iota(c)$ where $(\psi,\iota)^{(1)}:\mathcal{K}(X)\to B$ is given by $(\psi,\iota)^{(1)}(\theta_{\xi,\eta})=\psi(\xi)\psi(\eta)^{\ast}$,
		\end{itemize}
		for all $c\in J_X$, we say that $(\iota,\psi)$ is a \emph{covariant representation of $X$}
	\end{definition}

	\begin{definition}
		The \emph{Toeplitz-Cuntz-Pimsner algebra} of $X$, denoted by $\CT_X$, is the universal C*-algebra with respect to representations of $X$. Similarly, the \emph{Cuntz-Pimsner algebra} of $X$, denoted by $\CO_X$, is the universal C*-algebra with respect to covariant representations of $X$.
	\end{definition}

\subsection{Kajiwara-Watatani algebras}\label{sec:KWalg}

Let $\Gamma=\IFS$ be a contractive iterated function system and $K$ its attractor. We recall the C*-correspondence defined in \cite{KW1}. There, they ask for the maps to be proper contractions, but for the construction of the algebra, as long as we have a self-similar set, we do not even need contractions (see \cite{Mu1}). We let $A=C(K)$, 
$X=C(\mathcal{G})$ where%
$$\mathcal{G}=\bigcup _{i=1}^{d}\mathcal{G}_i$$
with
$$\mathcal{G}_i=\left\{ (x,y)\in K\times K:x=\gamma _{i}(y)\right\}$$
being the cograph of $\gamma_i$ in the terminology of \cite{KW1}. The structure of C*-correspondence is given by%
\[
(\phi(a)\xi b)(x,y)=a(x)\xi(x,y)b(y) 
\]%
and%
\begin{equation}\label{eq:inner.product}
\left\langle \xi,\eta\right\rangle _{A}(y)=\sum_{i=1}^{d}\overline{\xi(\gamma
_{i}(y),y)}\eta(\gamma _{i}(y),y) 
\end{equation}
for $a,b\in A$ and $\xi,\eta\in X$.

	\begin{proposition}\cite[Proposition 2.1]{KW1}\label{prop:BasicResultsCorr}
		$X=(C(\mathcal{G}),\phi)$ is a full C*-correspondence over $A=C(K)$ and $\phi:A\to\CL(X)$ is faithful and unital. Moreover, the Hilbert module norm is equivalent to the sup norm in $C(\mathcal{G})$.
	\end{proposition}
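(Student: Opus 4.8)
The plan is to verify directly each of the asserted properties of the pair $(X,\phi)$, since all the structure is given by explicit formulas. The statement bundles together several claims: (a) that $\langle\,,\,\rangle_A$ takes values in $A$ and makes $X$ a Hilbert $A$-module; (b) that $\phi$ defines a $*$-homomorphism into $\CL(X)$; (c) that $\phi$ is faithful and unital; (d) that $X$ is full; and (e) that $\|\cdot\|_2$ is equivalent to $\|\cdot\|_\infty$ on $C(\CG)$. I would dispatch these roughly in that order, leaning on compactness of $K$ (hence of each cograph $\CG_i\subseteq K\times K$, which is closed) to keep everything continuous.

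For (a) and (e), the key observation is that the formula \eqref{eq:inner.product} makes sense because $y\mapsto(\gamma_i(y),y)$ is continuous, so each summand $\overline{\xi(\gamma_i(y),y)}\eta(\gamma_i(y),y)$ is in $C(K)$, and a finite sum stays in $C(K)=A$. Sesquilinearity, the adjoint identity (ii), and positivity (iii) are then immediate from the pointwise definition, positivity being clear since $\langle\xi,\xi\rangle_A(y)=\sum_i|\xi(\gamma_i(y),y)|^2\ge 0$. For the norm comparison I would argue: on one side $\|\langle\xi,\xi\rangle_A\|_\infty=\sup_y\sum_i|\xi(\gamma_i(y),y)|^2\le d\,\|\xi\|_\infty^2$, giving $\|\xi\|_2\le\sqrt d\,\|\xi\|_\infty$; on the other side, since $\CG=\bigcup_i\CG_i$ and every point of $\CG_i$ has the form $(\gamma_i(y),y)$, we get $\|\xi\|_\infty^2=\sup_{(x,y)\in\CG}|\xi(x,y)|^2\le\sup_y\sum_i|\xi(\gamma_i(y),y)|^2=\|\xi\|_2^2$, hence $\|\xi\|_\infty\le\|\xi\|_2\le\sqrt d\,\|\xi\|_\infty$. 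Equivalence of norms then gives completeness of $(C(\CG),\|\cdot\|_2)$ from completeness in the sup norm, establishing (iv) and finishing (a).

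For (b), I would check that $\phi(a)$ is adjointable with $\phi(a)^*=\phi(\bar a)$ — again immediate from the pointwise formula $(\phi(a)\xi)(x,y)=a(x)\xi(x,y)$ — and that $a\mapsto\phi(a)$ is multiplicative and $*$-preserving; unitality is clear since $\phi(1)$ is the identity operator. For faithfulness, if $\phi(a)=0$ then $a(x)\xi(x,y)=0$ for all $\xi\in C(\CG)$ and all $(x,y)\in\CG$; choosing $\xi$ suitably (e.g. using Urysohn on the compact Hausdorff space $\CG$) forces $a(x)=0$ for every $x$ in the first-coordinate projection of $\CG$, which by \eqref{eq:InvSet} is all of $K=\bigcup_i\gamma_i(K)$, so $a=0$. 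For fullness (d), I would show the ideal generated by $\langle X,X\rangle_A$ is all of $C(K)$: given $y_0\in K$, pick $i$ and $\xi$ with $\xi(\gamma_i(y_0),y_0)\ne 0$ and $\xi$ vanishing near the (finitely relevant, by continuity) competing points $(\gamma_j(y_0),y_0)$; then $\langle\xi,\xi\rangle_A(y_0)>0$, so $\langle X,X\rangle_A$ is a subset of $C(K)$ not contained in any maximal ideal, hence dense.

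The step I expect to require the most care is \textbf{faithfulness and fullness}, because there one must manufacture continuous functions on $\CG$ that are nonzero at a prescribed point of one cograph while controlled on the others — the cographs $\CG_i$ can overlap (precisely at the branched points), so one cannot simply separate them globally, and the argument must be local, using continuity to handle only the finitely many sheets meeting a given point and Urysohn's lemma on the compact Hausdorff space $\CG$. Everything else is a routine pointwise verification.
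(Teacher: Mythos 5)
Your verification is correct, and it follows the same direct, formula-by-formula route as the cited source: the paper itself gives no proof of this proposition (it is imported from \cite[Proposition 2.1]{KW1}), and the two-sided estimate $\|\xi\|_\infty\le\|\xi\|_2\le\sqrt d\,\|\xi\|_\infty$ you derive is exactly the argument used there. The only comment worth making is that the step you flag as delicate is actually the easiest one: the constant function $1\in C(\CG)$ already yields both claims, since $\langle 1,1\rangle_A=d\cdot 1_A$ is invertible (fullness) and $\phi(a)1=0$ forces $a$ to vanish on the first-coordinate projection $\bigcup_i\gamma_i(K)=K$ (faithfulness), so no Urysohn argument or separation of the overlapping cographs is needed.
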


\begin{definition}
		The \emph{Kajiwara-Watatani algebra} $\mathcal{O}_{\Gamma}$ associated with $\Gamma$ is the Cuntz-Pimsner algebra associated with the C*-correspondence defined above. The \emph{Toeplitz algebra $\CT_\Gamma$ associated with $\Gamma$} is the corresponding Toeplitz-Cuntz-Pimsner algebra.
\end{definition}

\begin{lemma}\cite[Lemma~2.8]{KW2}\label{lemma:FinBranchIdeal}
If $\Gamma$ satisfies the finite branch condition then $J_X=\{a\in A=C(K);a~\mathrm{vanishes~on~}B(\gamma
_{1},...,\gamma _{d})\}$.
\end{lemma}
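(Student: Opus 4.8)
The plan is to compute $J_X=\phi^{-1}(\mathcal{K}(X))$ directly, recalling that $\phi(a)$ is multiplication on $X=C(\mathcal{G})$ by the function $(x,y)\mapsto a(x)$. Three geometric facts about the cographs will be used: each $\mathcal{G}_i$ is homeomorphic to $K$ via the second coordinate; $\mathcal{G}_i\cap\mathcal{G}_j\scj B\IFS\times C\IFS$ for $i\neq j$; and, writing $\mathcal{G}_i^{\circ}:=\mathcal{G}_i\setminus\bigcup_{j\neq i}\mathcal{G}_j$, a short argument using continuity of the $\gamma_k$ shows each $\mathcal{G}_i^{\circ}$ is open in $\mathcal{G}$ while $\mathcal{G}_i^{\circ}\cap\mathcal{G}_j^{\circ}=\emptyset$ for $i\neq j$. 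Finally, the finite branch condition makes $C\IFS$ and hence $B\IFS\scj\bigcup_{y\in C\IFS}\{\gamma_1(y),\dots,\gamma_d(y)\}$ finite, in particular closed.

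For the inclusion $\{a:a|_{B\IFS}=0\}\scj J_X$: since $B\IFS$ is finite, each such $a$ is a uniform limit of functions vanishing on an open neighbourhood $U$ of $B\IFS$, and $\mathcal{K}(X)$ is closed, so we may assume $a\equiv0$ on such a $U$. On $\{(x,y)\in\mathcal{G}:x\notin U\}$ the cographs are pairwise disjoint (an overlap would force $x\in B\IFS$), so Urysohn's lemma yields $\chi_i\in C(\mathcal{G})$, $0\le\chi_i\le1$, with $\supp\chi_i\scj\mathcal{G}_i^{\circ}$ and $\sum_i\chi_i\equiv1$ on $\{x\notin U\}$; consequently $\sum_i a\chi_i=a$ as functions on $\mathcal{G}$. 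A direct check shows that multiplication by $a\chi_i$ equals $\theta_{a\chi_i,\omega_i}$, where $\omega_i\in C(\mathcal{G})$ is real, supported in $\mathcal{G}_i^{\circ}$, and $\equiv1$ on $\supp(a\chi_i)$ — the point being that, since $\omega_i$ lives on $\mathcal{G}_i^{\circ}$, the inner product $\langle\omega_i,\zeta\rangle(y)$ reduces to $\overline{\omega_i(\gamma_i(y),y)}\,\zeta(\gamma_i(y),y)$. Hence $\phi(a)=\sum_{i=1}^d\theta_{a\chi_i,\omega_i}\in\mathcal{K}(X)$.

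For the reverse inclusion I would first record the Hilbert-module version of complete continuity: if $T\in\mathcal{K}(X)$ and $(\zeta_n)$ is bounded in $X$ with $\langle\eta,\zeta_n\rangle\to0$ in $A$ for every $\eta\in X$, then $\|T\zeta_n\|\to0$ (approximate $T$ by a finite sum $\sum_l\theta_{\xi_l,\eta_l}$ and estimate $\|\sum_l\xi_l\langle\eta_l,\zeta_n\rangle\|$). Now suppose, for contradiction, that $a\in J_X$ with $a(x_0)\neq0$ for some $x_0\in B\IFS$, say $x_0=\gamma_1(y_0)=\gamma_2(y_0)$. Because $C\IFS$ is finite there is a closed neighbourhood $W$ of $y_0$ meeting $C\IFS$ only in $y_0$, so over $W$ the strands $S_1=\{(\gamma_1(y),y):y\in W\}$ and $S_2=\{(\gamma_2(y),y):y\in W\}$ are disjoint except at $(x_0,y_0)$. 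Choose $\psi_n\in C(K)$ with $0\le\psi_n\le1$, $\|\psi_n\|_\infty=1$, $\psi_n(y_0)=0$, and $\supp\psi_n$ shrinking to $\{y_0\}$ (and contained in the interior of $W$), and let $\zeta_n\in C(\mathcal{G})$ equal $\psi_n(y)$ on $S_1$, $-\psi_n(y)$ on $S_2$, and $0$ elsewhere; continuity at $(x_0,y_0)$ holds because $\psi_n(y_0)=0$. Then $\langle\zeta_n,\zeta_n\rangle(y)=2\psi_n(y)^2$, so $\|\zeta_n\|=\sqrt2$, whereas for any $\eta\in X$
\[
\langle\eta,\zeta_n\rangle(y)=\psi_n(y)\bigl(\overline{\eta(\gamma_1(y),y)}-\overline{\eta(\gamma_2(y),y)}\bigr)\longrightarrow0
\]
uniformly in $y$, since $\eta$ is continuous and $\gamma_1(y),\gamma_2(y)\to x_0$ on $\supp\psi_n$. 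On the other hand $\phi(a)\zeta_n-a(x_0)\zeta_n$ is multiplication of $\zeta_n$ by $(x,y)\mapsto a(x)-a(x_0)$, a function whose modulus is uniformly small on $\supp\zeta_n$ (the first coordinates of points of $\supp\zeta_n$ shrink to $x_0$ and $a$ is continuous), so by the norm equivalence of Proposition~\ref{prop:BasicResultsCorr} we get $\phi(a)\zeta_n=a(x_0)\zeta_n+r_n$ with $\|r_n\|\to0$, hence $\|\phi(a)\zeta_n\|\to\sqrt2\,|a(x_0)|\neq0$. This contradicts complete continuity of $\phi(a)\in\mathcal{K}(X)$; therefore $a$ vanishes on $B\IFS$, and the two inclusions give the claim.

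The main obstacle is the reverse inclusion, specifically the choice of the test vectors $\zeta_n$: a naive bump normalised to norm $1$ and supported near $(x_0,y_0)$ fails to satisfy $\langle\eta,\zeta_n\rangle\to0$, because it still ``sees'' the value $\eta(x_0,y_0)$. One really needs the cancellation obtained by putting opposite signs on two strands meeting at the branched point, and it is exactly here that the hypothesis $x_0\in B\IFS$ (i.e.\ $\gamma_1(y_0)=\gamma_2(y_0)$) is used; the finiteness of $C\IFS$ is what guarantees those strands can be taken disjoint away from $(x_0,y_0)$ so that $\zeta_n$ is well defined.
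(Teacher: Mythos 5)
The paper itself offers no proof of this lemma (it is quoted from [KW2]), so your argument has to stand on its own, and in structure it does: the partition of unity $\{\chi_i\}$ subordinate to the open sets $\CG_i^{\circ}$ away from a neighbourhood of the finite set $B\IFS$, the identification of multiplication by $a\chi_i$ with $\theta_{a\chi_i,\omega_i}$, and, for the converse, testing the complete continuity of elements of $\CK(X)$ against sign-alternating vectors $\zeta_n$ concentrating at a branched point are exactly the right ingredients, and all the computations you indicate ($\|\zeta_n\|=\sqrt2$, $\langle\eta,\zeta_n\rangle\to0$, $\phi(a)\zeta_n\approx a(x_0)\zeta_n$) check out. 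This is essentially the Kajiwara--Watatani argument.

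There is, however, one step that is not justified and that can genuinely fail under the hypotheses in force in this paper: a function $\psi_n\in C(K)$ with $\psi_n(y_0)=0$, $\|\psi_n\|_\infty=1$ and $\supp\psi_n$ shrinking to $\{y_0\}$ exists only if $y_0$ is \emph{not} an isolated point of $K$. If $y_0$ is isolated, every such $\psi_n$ is identically zero, and no admissible sequence $\zeta_n$ exists --- for good reason: the fibre of $\CG$ over an isolated $y_0$ is finite and contributes an honestly finite-rank summand, so $\phi(a)$ can lie in $\CK(X)$ even with $a(x_0)\neq0$. Concretely, take $K=\{0,1,10\}\scj\rn$ with $\gamma_1(0)=\gamma_1(1)=0$, $\gamma_1(10)=1$, $\gamma_2\equiv0$, $\gamma_3\equiv10$: these are contractions in the sense of this paper, $K$ is the attractor, $C\IFS=\{0,1\}$ is finite and $B\IFS=\{0\}$, yet $X\cong\cn^7$ is finitely generated over $A\cong\cn^3$, so $1\in\CK(X)=\CL(X)$ and $J_X=A\neq\{a:a(0)=0\}$. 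Thus the lemma is actually false at the level of generality at which this paper states it; your proof breaks exactly where it must. The gap closes (and your argument becomes complete) once the $\gamma_i$ are injective --- e.g.\ proper contractions, as assumed in [KW2] --- and $K$ has more than one point: then $\gamma_{i_1}\circ\cdots\circ\gamma_{i_n}(K)$ is a set of at least two points containing $y_0$ and of diameter at most $c^n\operatorname{diam}(K)$, so $K$ is perfect and the $\psi_n$ exist. You should either record this hypothesis explicitly or verify non-isolatedness of $y_0$ before constructing $\psi_n$.
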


\subsection{KMS states on Cuntz-Pimsner algebras}

We review some results of \cite{LN1} that will be used to describe KMS states on Kajiwara-Watatni algebras. For the basic definitions on KMS states, we refer the reader to \cite{Ped}.

Let $(X,\phi)$ be a full C*-correspondence over $A$ which is non-degenerated. To define a quasi-free dynamics we need a one-parameter group of automorphisms $\delta=\{\delta_t\}_{t\in\rn}$ of $A$ and a one-parameter group os isometries $\upsilon=\{\upsilon_t\}_{t\in\rn}$ of $X$ such that $\upsilon_t(\phi(a)\xi)=\phi(\delta_t(a))\upsilon_t(\xi)$ and $\left<\upsilon_t(\xi),\upsilon_t(\eta)\right>=\delta_t(\left<\xi,\eta\right>)$ for all $a\in A$ and $\xi,\eta\in X$. By the universal property of $\CT_X$, we get a one-parameter group of automorphisms $\{\sigma_t\}_{t\in\rn}$ that can be restricted to $\CO_X$. In our case, we will suppose that $\delta_t(a)=a$ and $\upsilon_t(\xi)=e^{-itD}\xi$ where $D$ is a self-adjoint element of $\CL(X)$.

\begin{definition}
	For $\tau$ a tracial state in $A$ and $T\in\CL(X)$, define
	$$\Tr_{\tau}(T)=\lim_{k\to\infty}\sum_{\xi\in I_k}{\omega(\left<\xi,T\xi\right>)}$$
	where $\{e_k=\sum_{\xi\in I_k}\theta_{\xi,\xi}\}$ is an approximate unit of $\mathcal{K}(X)$.
\end{definition}

It is shown in \cite[Theorem~1.1]{LN1} that $\Tr_{\tau}(T)$ does not depend on the choice of approximate unit and that it can be extended to a positive linear functional on a suitable space.

Before stating Laca-Neshveyev's theorem, we briefly recall the Arveson spectrum \cite{Arv} (see also \cite{Ma1}). For $f\in L^1(\rn)$, we let $\widehat{f}$ denote its Fourier transform. For the one-parameter group os isometries $\upsilon=\{\upsilon_t\}_{t\in\rn}$, we let $\pi(f)\in\CL(X)$ be the operator given by
\begin{equation}\label{eq:pif}
\pi(f)\xi=\int_{\rn}f(t)\upsilon_{-t}(\xi)dt.
\end{equation}
The Arveson spectrum of $\xi\in X$ with respect to $\upsilon$ is given by
\[\Sp_{\upsilon}(\xi)=\{\theta\in\rn\mid \widehat{f}(\theta)=0\text{ for all }f\text{ such that }\pi(f)\xi=0\}.\]

\begin{definition}
	We say that the one-parameter group os isometries $\upsilon=\{\upsilon_t\}_{t\in\rn}$ satisfies \emph{the positive energy condition} if the set $\{\xi\in X\mid\Sp_{\upsilon}(\xi)\scj(0,\infty)\}$ is dense in $X$.
\end{definition}

Since we will be only interested in the C*-correspondence given in Section \ref{sec:KWalg}, we restrict to the case where $A$ is commutative.

\begin{theorem}\cite[Theorems 2.1, 2.2 and 2.5]{LN1}\label{thm:LN}
	Let $X$ be a C*-correspondence over a commutative C*-algebra $A$ and $D$ an self-adjoint operator on $X$. Suppose that $\upsilon=\{\upsilon_t\}_{t\in\rn}$ given by $\upsilon_t=e^{-itD}$ satisfies the positive energy condition and let $\sigma$ be the corresponding one-parameter group of automorphisms. For every $\beta\in(0,\infty]$, there is a bijection between the set $\varphi$ of $(\sigma,\beta)$-KMS states on $\CT_X$ and the set of states $\tau$ on $A$ satisfying
	\begin{itemize}
		\item[(K2)] $\Tr_{\tau}(ae^{-\beta D})\leq\tau(a)$ for all $a\in A^+$,
	\end{itemize}
	where $\Tr_{\tau}(ae^{-\infty D})=0$.
	Moreover, there is a bijection between the set of $(\sigma,\beta)$-KMS states on $\CO_X$ and the set of states on $A$ satisfying (K2) and
	\begin{itemize}
		\item[(K1)] $\Tr_{\tau}(ae^{-\beta D})=\tau(a)$ for all $a\in J_X$.
	\end{itemize}
	Both bijections are given by $\tau=\varphi|_A$.
\end{theorem}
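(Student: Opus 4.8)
The plan is simply to invoke the cited results of \cite{LN1}: the present situation --- $A$ commutative, $\delta_t=\mathrm{id}$, $\upsilon_t=e^{-itD}$ with $D$ a self-adjoint element of $\CL(X)$ (so that every $\xi\in X$ is analytic for $\upsilon$), and $\upsilon$ of positive energy --- is a special case of the framework of \cite[Section~2]{LN1}, so Theorems~2.1, 2.2 and~2.5 there apply verbatim and nothing further needs to be done. For orientation, I outline how that characterization is obtained and where the hypotheses are used.

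Given a $(\sigma,\beta)$-KMS state $\varphi$ on $\CT_X$, one puts $\tau=\varphi|_A$, automatically a trace since $A$ is commutative. Condition~(K2) comes from the fact that the operators $\sum_{\xi\in I_k}\psi(\xi)\psi(\xi)^*=(\psi,\iota)^{(1)}(e_k)$, for the approximate unit $e_k=\sum_{\xi\in I_k}\theta_{\xi,\xi}$ of $\mathcal{K}(X)$, form an increasing net bounded by $1$ in $\CT_X$: for $a\in A^+$ this gives
\[
\tau(a)=\varphi(\iota(a))\ \ge\ \sum_{\xi\in I_k}\varphi\big(\iota(a^{1/2})\psi(\xi)\psi(\xi)^*\iota(a^{1/2})\big),
\]
and unwinding the KMS condition on each monomial --- using $\sigma_z(\iota(a))=\iota(a)$, the relation between $\sigma_{i\beta}$ and $e^{\pm\beta D}$ on $\psi(\xi)$, and $\psi(\xi)^*\iota(a)\psi(\eta)=\iota(\langle\xi,\phi(a)\eta\rangle)$ --- rewrites the summand as $\tau\big(\langle\xi,\phi(a^{1/2})e^{-\beta D}\phi(a^{1/2})\xi\rangle\big)$, whose sum over $I_k$ tends to $\Tr_\tau(\phi(a)e^{-\beta D})$ as $k\to\infty$ (using that $\Tr_\tau$ is tracial). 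That this limit exists and is independent of the approximate unit is \cite[Theorem~1.1]{LN1}, and this is exactly where the positive energy condition is used. Conversely, from a state $\tau$ satisfying~(K2) one builds $\varphi$ on the dense $*$-subalgebra spanned by the monomials $\psi(\xi_1)\cdots\psi(\xi_m)\,\iota(a)\,\psi(\eta_n)^*\cdots\psi(\eta_1)^*$ by an explicit formula involving $\tau$ and $e^{-\beta D}$, and one checks it is well defined, satisfies the KMS identity on generators, and --- this being precisely the force of (K2) --- extends to a genuine state rather than just a self-adjoint functional.

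For $\CO_X$ one tests the covariance relation $(\psi,\iota)^{(1)}(\phi(c))=\iota(c)$ for $c\in J_X$: writing $\phi(c)$ as a norm limit of $\phi(c)^{1/2}e_k\phi(c)^{1/2}$ and running the computation above turns $\varphi\big((\psi,\iota)^{(1)}(\phi(c))\big)$ into $\Tr_\tau(\phi(c)e^{-\beta D})$, while $\varphi(\iota(c))=\tau(c)$; hence $\varphi$ kills the ideal of $\CT_X$ defining $\CO_X$ exactly when (K2) is sharpened to the equality~(K1) on $J_X$, and conversely. At $\beta=\infty$, by the stipulated convention $\Tr_\tau(ae^{-\infty D})=0$, so (K2) is automatic and (K1) becomes $\tau|_{J_X}=0$; these correspond to the ground states, obtained as weak$^*$-limits of $(\sigma,\beta)$-KMS states. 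Were one to reprove the theorem rather than cite it, the step I would expect to be the main obstacle is precisely the control of the twisted trace $\Tr_\tau$ --- its convergence, independence of the frame, trace-like behaviour under the manipulations above, and the positivity argument in the converse direction --- since it is there that the positive energy condition and the interplay between analytic and general elements of $X$ are doing the work; all of this is carried out in \cite[Theorem~1.1 and Section~2]{LN1}.
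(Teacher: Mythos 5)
Your proposal is correct and matches the paper exactly: Theorem~\ref{thm:LN} is stated as a citation of \cite[Theorems 2.1, 2.2 and 2.5]{LN1} and the paper offers no proof beyond verifying (elsewhere, via Lemma~\ref{lem:positive.energy}) that the hypotheses of the Laca--Neshveyev framework hold. Your additional sketch of how the bijections are obtained in \cite{LN1} is accurate but not required.
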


The choice for the names (K1) and (K2) on the conditions above is to be consistent with \cite{IKW}.

\begin{definition}\cite[Definition 2.3]{LN1}\label{def:fininfgeral}
	In the condition of Theorem~\ref{thm:LN}, let $\varphi$ be a $(\sigma,\beta)$-KMS state and $\tau=\varphi|_A$. For $\tau'$ a state on $A$, we let $\CF(\tau')=\Tr_{\tau'}(\cdot e^{-\beta D})$. We say that $\varphi$ is of \emph{finite type} if there exists a finite trace $\tau_0$ such that $\tau=\sum_{n=0}^{\infty}\CF^{n}(\tau_0)$ in the weak*-topology. We say that $\varphi$ is of \emph{infinite type} if $\tau=\CF(\tau)$.
\end{definition}

\begin{proposition}\cite[Proposition 2.4]{LN1}
	In the condition of Theorem~\ref{thm:LN}, let $\varphi$ be a $(\sigma,\beta)$-KMS state. Then there exists a unique convex combination $\varphi=\lambda\phi_1+(1-\lambda)\varphi_2$ where $\varphi_1$ is a $(\sigma,\beta)$-KMS state of finite type and $\varphi_2$ is a $(\sigma,\beta)$-KMS state of infinite type.
\end{proposition}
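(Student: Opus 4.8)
The plan is to work with the positive functionals on $A=C(K)$ and exploit the order structure induced by the map $\CF$ of Definition~\ref{def:fininfgeral}, together with the affine bijection of Theorem~\ref{thm:LN}. Write $\tau=\varphi|_A$, a state on $A$ satisfying (K2). First I would record two properties of $\CF$: it sends positive functionals to positive functionals and is monotone. Indeed, for $a\in A^+$ the trace property of $\Tr_{\tau'}$ from \cite[Theorem~1.1]{LN1} gives $\Tr_{\tau'}(ae^{-\beta D})=\Tr_{\tau'}(e^{-\beta D/2}ae^{-\beta D/2})$, and since $e^{-\beta D/2}ae^{-\beta D/2}\geq 0$ and $\tau'\mapsto\Tr_{\tau'}(\cdot)$ is positive and linear in $\tau'$, this number is nonnegative and nondecreasing in $\tau'$. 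Condition (K2) is exactly the statement $\CF(\tau)\leq\tau$, so iterating and using monotonicity yields a decreasing sequence of positive functionals $\tau\geq\CF(\tau)\geq\CF^2(\tau)\geq\cdots\geq 0$. Evaluating at a fixed $a\in A^+$ gives a bounded nonincreasing sequence of reals; writing a general element of $A$ as a combination of positive elements, this shows $\CF^n(\tau)$ converges in the weak*-topology to a positive functional $\tau_\infty$ with $\|\tau_\infty\|\leq 1$.

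The next step is to decompose $\tau$. I would first check $\CF(\tau_\infty)=\tau_\infty$: this uses the normality of the trace from \cite[Theorem~1.1]{LN1}, i.e.\ that $\CF$ is continuous along monotone sequences of positive functionals, so $\CF(\tau_\infty)=\CF(\lim_n\CF^n(\tau))=\lim_n\CF^{n+1}(\tau)=\tau_\infty$. Now put $\tau_0:=\tau-\CF(\tau)\geq 0$, a bounded trace on $A$ (every positive functional on the commutative algebra $A$ is tracial). Since $\CF^n(\tau_0)=\CF^n(\tau)-\CF^{n+1}(\tau)$, the partial sums telescope: $\sum_{n=0}^{N}\CF^n(\tau_0)=\tau-\CF^{N+1}(\tau)$, which converges weak* to $\tau-\tau_\infty$. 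Hence $\tau=\tau_{\mathrm{f}}+\tau_\infty$, where $\tau_{\mathrm{f}}:=\sum_{n\geq 0}\CF^n(\tau_0)$ (weak*) has the form required for finite type and $\CF(\tau_\infty)=\tau_\infty$ is of infinite type.

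Set $\lambda:=\tau_{\mathrm{f}}(1)\in[0,1]$; since $A$ is unital and $\tau(1)=1$, also $\tau_\infty(1)=1-\lambda$. Assume $\lambda\in(0,1)$ (the boundary cases $\lambda\in\{0,1\}$, where $\varphi$ is itself of infinite, resp.\ finite, type, are immediate). Normalize $\tau_1:=\lambda^{-1}\tau_{\mathrm{f}}$ and $\tau_2:=(1-\lambda)^{-1}\tau_\infty$, which are states on $A$. Using normality of $\CF$ again, $\CF(\tau_1)=\sum_{n\geq 1}\CF^n(\lambda^{-1}\tau_0)\leq\tau_1$, so $\tau_1$ satisfies (K2) and is of finite type (with finite trace $\lambda^{-1}\tau_0$); likewise $\CF(\tau_2)=\tau_2$ satisfies (K2) and $\tau_2$ is of infinite type. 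Since restriction to $A$ is an affine bijection from the $(\sigma,\beta)$-KMS states on $\CT_X$ onto the states satisfying (K2) (Theorem~\ref{thm:LN}; note that the set of states obeying the linear inequality (K2) is convex), its inverse is also affine, and letting $\varphi_i$ be the KMS state with $\varphi_i|_A=\tau_i$ we obtain $\varphi=\lambda\varphi_1+(1-\lambda)\varphi_2$ with $\varphi_1$ of finite and $\varphi_2$ of infinite type.

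For uniqueness, suppose $\varphi=\mu\psi_1+(1-\mu)\psi_2$ with $\psi_1$ of finite and $\psi_2$ of infinite type; restricting to $A$ gives $\tau=\mu\sigma_1+(1-\mu)\sigma_2$ with $\sigma_1=\sum_{n\geq 0}\CF^n(\sigma_0)$ for a finite trace $\sigma_0$ and $\CF(\sigma_2)=\sigma_2$. By normality of $\CF$, $\CF^n(\sigma_1)=\sum_{m\geq n}\CF^m(\sigma_0)$; since $\|\CF^m(\sigma_0)\|=\CF^m(\sigma_0)(1)$ and $\sum_m\CF^m(\sigma_0)(1)=\sigma_1(1)<\infty$, the series converges in norm and its tails $\CF^n(\sigma_1)$ tend to $0$. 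Hence $\CF^n(\tau)=\mu\CF^n(\sigma_1)+(1-\mu)\sigma_2\to(1-\mu)\sigma_2$, while also $\CF^n(\tau)\to\tau_\infty$; thus $(1-\mu)\sigma_2=\tau_\infty$, and evaluating at $1$ gives $\mu=\lambda$. Then $\sigma_2=\tau_2$, so $\psi_2=\varphi_2$ by injectivity of the bijection, and $\mu\psi_1=\varphi-(1-\mu)\psi_2=\lambda\varphi_1$ forces $\psi_1=\varphi_1$. The step I expect to be the main obstacle is establishing the positivity and normality properties of $\CF$ used throughout — that it maps positive functionals to positive functionals and is continuous along monotone sequences of such — which must be read off from the construction of the extended trace $\Tr_\tau$ in \cite[Theorem~1.1]{LN1}; the remainder is bookkeeping with the telescoping series and the affine bijection of Theorem~\ref{thm:LN}.
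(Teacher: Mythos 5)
The paper offers no proof of this statement; it is quoted verbatim from Laca--Neshveyev \cite[Proposition 2.4]{LN1}, and your argument is essentially a reconstruction of their original proof: set $\tau_0=\tau-\CF(\tau)\ge 0$, telescope, identify $\tau_\infty=\lim_n\CF^n(\tau)$ as the infinite-type part, and transport everything back through the affine bijection of Theorem~\ref{thm:LN}. The structure, the normalization, and the uniqueness argument via $\CF^n(\sigma_1)\to 0$ in norm are all correct.

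One step deserves a sharper justification: your derivation of $\CF(\tau_\infty)=\tau_\infty$ by ``continuity of $\CF$ along monotone sequences'' applied to the \emph{decreasing} sequence $\CF^n(\tau)$. The normality one can extract from \cite[Theorem~1.1]{LN1} is compatibility with \emph{increasing} limits: for positive $T$, $\Tr_{\tau'}(T)=\sup_k\sum_{\xi\in I_k}\tau'(\langle\xi,T\xi\rangle)$ is a supremum of weak*-continuous positive functions of $\tau'$, hence commutes with suprema of increasing sequences but is only lower semicontinuous along decreasing ones. Both that semicontinuity and monotonicity give the same inequality $\CF(\tau_\infty)\le\tau_\infty$, so the decreasing-limit argument as stated does not close. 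The fix is already contained in your own telescoping identity: normality for increasing limits gives $\CF(\tau_{\mathrm f})=\sum_{n\ge1}\CF^n(\tau_0)=\tau_{\mathrm f}-\tau_0$, whence
\[
\CF(\tau_\infty)=\CF(\tau)-\CF(\tau_{\mathrm f})=\CF(\tau)-\tau_{\mathrm f}+\bigl(\tau-\CF(\tau)\bigr)=\tau-\tau_{\mathrm f}=\tau_\infty .
\]
With that substitution the proof is complete and matches the source's.
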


It follows from this proposition that to study the set of $(\sigma,\beta)$-KMS states it is enough to study KMS states of finite and infinite types. Also, there are no KMS-states of infinite type for $\beta=\infty$.

\section{The generalized gauge action}\label{sec:opg}

The goal of this section is to define a one-parameter group of automorphisms that generalizes the gauge action on Kajiwara-Watatni algebras. We keep the notation of Section \ref{sec:KWalg}, that is, for an iterated function system $\Gamma=\IFS$ with attractor $K$, we let $\CG$ be the union of the cographs, $A=C(K)$ and $X=C(\CG)$.

\begin{proposition}\label{prop:infinitesimal.generator}
	Let $\Gamma=\IFS$ be a contractive iterated function system satisfying the finite branch condition and let $\{h_k\}_{k=1}^d$ be a set of strictly positive continuous functions on $K$ that are compatible with the branched points, in the sense that for all $x\in B\IFS$ and all $k,l\in I(x)$, we have that $h_k(x)=h_l(x)$. For each $\xi\in X$, the function $D\xi:\CG\to\cn$ given by
	\[D\xi(\gamma_k(y),y)=\ln(h_k(\gamma_k(y)))\xi(\gamma_k(y),y)\]
	is well-defined and continuous. Moreover the map $D:X\to X$ that sends $\xi\in X$ to $D\xi$ is an self-adjoint element of $\CL(X)$.
\end{proposition}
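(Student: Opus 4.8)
The plan is to verify the three assertions in order: (i) $D\xi$ is a well-defined function on $\CG$; (ii) $D\xi$ is continuous, so that $D$ maps $X$ into $X$; and (iii) $D$ is adjointable with $D=D^*$.

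For well-definedness, the only ambiguity in the formula $D\xi(\gamma_k(y),y)=\ln(h_k(\gamma_k(y)))\xi(\gamma_k(y),y)$ occurs at a point $(x,y)\in\CG$ that lies in more than one cograph, i.e. when $x=\gamma_k(y)=\gamma_l(y)$ for distinct $k,l$. In that case $y\in C\IFS$ and $x=\gamma_k(y)\in B\IFS$, and moreover $k,l\in I(x)$; the compatibility hypothesis then gives $h_k(x)=h_l(x)$, so $\ln(h_k(x))\xi(x,y)=\ln(h_l(x))\xi(x,y)$ and the value is unambiguous. (Here I use that each $h_k$ is strictly positive, so $\ln h_k$ makes sense and is real-valued.)

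For continuity, I would exhibit $D\xi$ as a finite sum of continuous functions, handling the overlaps of the $\CG_i$ carefully. Define $g:\CG\to\cn$ by $g(x,y)=\ln(h_k(x))$ whenever $(x,y)\in\CG_k$; by the argument just given this is well-defined, and on each closed piece $\CG_k$ it equals the restriction of the continuous function $(x,y)\mapsto\ln(h_k(x))$ on $K\times K$, so $g$ is continuous on each $\CG_k$ and hence on the finite union $\CG=\bigcup_k\CG_k$ (a finite closed cover). Then $D\xi=g\cdot\xi$ is a product of continuous functions on $\CG$, hence continuous, and $D\xi\in C(\CG)=X$. Since $g$ is bounded (as $K$ is compact and each $h_k$ is continuous and strictly positive), $\|D\xi\|_\infty\le\|g\|_\infty\|\xi\|_\infty$; by Proposition~\ref{prop:BasicResultsCorr} the Hilbert-module norm is equivalent to the sup norm, so $D$ is a bounded linear operator on $X$.

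For adjointability and self-adjointness, note that $g$ is real-valued (each $\ln h_k$ is real), so $D$ is multiplication by a real-valued continuous function. Using the inner product \eqref{eq:inner.product}, for $\xi,\eta\in X$ we compute
\[
\left\langle D\xi,\eta\right\rangle_A(y)=\sum_{i=1}^d\overline{g(\gamma_i(y),y)\xi(\gamma_i(y),y)}\,\eta(\gamma_i(y),y)=\sum_{i=1}^d\overline{\xi(\gamma_i(y),y)}\,g(\gamma_i(y),y)\eta(\gamma_i(y),y)=\left\langle \xi,D\eta\right\rangle_A(y),
\]
where the middle equality uses $\overline{g(\gamma_i(y),y)}=g(\gamma_i(y),y)$. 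Hence $D$ is adjointable with $D^*=D$, which completes the proof. The only genuinely delicate point is step (i)–(ii): one must be sure that the formula defining $D\xi$ is consistent on the overlaps $\CG_k\cap\CG_l$ and that this consistency is exactly what is needed to glue the pieces into a globally continuous function. This is where the compatibility hypothesis on $\{h_k\}$ is used, and it is the reason that hypothesis appears in the statement.
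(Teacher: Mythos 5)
Your proof is correct. The well-definedness and self-adjointness steps are essentially those of the paper, but your continuity argument takes a genuinely different route. The paper verifies continuity of $D\xi$ pointwise, by a case analysis: if $x\notin B\IFS$ a neighbourhood of $(x,y)$ in $\CG$ meets only one cograph, and if $x\in B\IFS$ it meets only the $\CG_k$ with $k\in I(x)$, on which the finitely many local formulas agree by compatibility. You instead note that compatibility makes the symbol $g(x,y)=\ln(h_k(x))$ for $(x,y)\in\CG_k$ globally well-defined on $\CG$ (on an overlap $\CG_k\cap\CG_l$ one has $x=\gamma_k(y)=\gamma_l(y)$, hence $x\in B\IFS$ and $k,l\in I(x)$, so $h_k(x)=h_l(x)$), that each $\CG_k$ is closed, and that the pasting lemma for the finite closed cover $\CG=\bigcup_{k}\CG_k$ gives $g\in C(\CG)$; then $D\xi=g\xi$ is continuous with no further work. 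This repackaging buys something: it exhibits $D$ as multiplication by a fixed real-valued continuous function, so boundedness (via the norm equivalence in Proposition~\ref{prop:BasicResultsCorr}) and the identity $\langle D\xi,\eta\rangle_A=\langle\xi,D\eta\rangle_A$ become one-line computations, whereas the paper leaves the self-adjointness verification implicit. Both arguments invoke the compatibility hypothesis at exactly the same place, namely on the overlaps of the cographs.
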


\begin{proof}
	Fix $\xi\in X$. To see that $D\xi$ is well-defined, we take $(x,y)\in\CG$. If $x\notin B\IFS$, then there exists a unique $k=1,\ldots,d$ such that $(x,y)=(\gamma_k(y),y)$ so that $D\xi(x,y)$ is uniquely determined. And if $x\in B\IFS$, by the hypothesis on the family $\{h_k\}_{k=1}^d$ we have that $\ln(h_k(x))\xi(x,y)=\ln(h_l(x))\xi(x,y)$, whenever $x=\gamma_k(x)=\gamma_l(x)$.
	
	Because each $\gamma_k$, $k=1,\ldots,d$, is a continuous function, if $(x,y)\in\CG$ is such that $x\notin B\IFS$, then there exists an open neighbourhood $U$ of $(x,y)$ in $\CG$ such that $U\cap \CG_k\neq\emptyset$ for a unique $k$. The continuity of $D\xi$ at $(x,y)$ follows immediately from the continuity of $\xi$ and $h_k$. Suppose now that $(x,y)$ is such that $x\in B\IFS$ and let $I=\{k\in\{1,\ldots,d\}\mid x=\gamma_k(y)\}$. Again, we can find a neighbourhood $U$ of $(x,y)$ such that $U\cap \CG_k\neq\emptyset$ if and only if $k\in I$. Because $I$ is finite and each $h_k$ is continuous, it is straightforward to show that $D\xi$ is continuous at $(x,y)$.
	
	Clearly $D$ is a linear operator on $X$ which is self-adjoint because $\ln(h_k)$ is a real function for each $k=1,\ldots,d$ and due to the definition of the inner product in \eqref{eq:inner.product}.
\end{proof}

\begin{corollary}\label{cor:action}
	In the conditions of Proposition \ref{prop:infinitesimal.generator}, the family $\{\upsilon_t\}_{t\in\rn}$ given by $\upsilon_t=e^{itD}$ is a one-parameter group of isometries on $X$. Moreover $\upsilon_t\phi(a)=\phi(a)\upsilon_t$ for all $a\in A$ and $t\in\rn$, and $\upsilon_t\in\CL(X)$ for all $t\in\rn$.
\end{corollary}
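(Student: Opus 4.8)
The plan is to verify the three assertions of the corollary directly from the definition of $D$ in Proposition~\ref{prop:infinitesimal.generator}, using standard functional calculus in the C*-algebra $\CL(X)$. First I would observe that since $D$ is a self-adjoint element of the C*-algebra $\CL(X)$ (established in Proposition~\ref{prop:infinitesimal.generator}), the elements $\upsilon_t=e^{itD}$ are well-defined via the holomorphic (or continuous) functional calculus and lie in $\CL(X)$ for every $t\in\rn$; moreover each $\upsilon_t$ is unitary in $\CL(X)$, since $\upsilon_t^*=e^{-itD}=\upsilon_t^{-1}$, and $t\mapsto\upsilon_t$ is a one-parameter group because $e^{isD}e^{itD}=e^{i(s+t)D}$ by the spectral mapping / functional calculus. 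Being unitary in $\CL(X)$, each $\upsilon_t$ is in particular an isometry of the Hilbert module, i.e. $\langle\upsilon_t\xi,\upsilon_t\eta\rangle=\langle\xi,\eta\rangle$ for all $\xi,\eta\in X$; this already records most of what is needed.

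Next I would prove the commutation relation $\upsilon_t\phi(a)=\phi(a)\upsilon_t$. The cleanest route is to first show that $D$ commutes with $\phi(a)$ for every $a\in A=C(K)$, and then pass to the functional calculus. To see $D\phi(a)=\phi(a)D$, I would compute both sides on an arbitrary $\xi\in X$ at a point $(\gamma_k(y),y)\in\CG$: on one hand $(D(\phi(a)\xi))(\gamma_k(y),y)=\ln(h_k(\gamma_k(y)))\,a(\gamma_k(y))\,\xi(\gamma_k(y),y)$, and on the other hand $(\phi(a)(D\xi))(\gamma_k(y),y)=a(\gamma_k(y))\,\ln(h_k(\gamma_k(y)))\,\xi(\gamma_k(y),y)$, and these agree since multiplication of scalars is commutative. (Here one uses that $\phi$ acts by pointwise multiplication by $a$ in the first variable, as defined in Section~\ref{sec:KWalg}.) Hence $D$ and $\phi(a)$ commute, so $\phi(a)$ commutes with every element of the C*-subalgebra generated by $D$, in particular with $e^{itD}=\upsilon_t$; this is the desired identity. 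Combining this with the isometry property gives $\upsilon_t(\phi(a)\xi)=\phi(a)\upsilon_t(\xi)$, which, since $\delta_t=\mathrm{id}$ here, is exactly the compatibility condition $\upsilon_t(\phi(a)\xi)=\phi(\delta_t(a))\upsilon_t(\xi)$ required in the setup of Theorem~\ref{thm:LN}.

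I do not expect any serious obstacle here: the statement is essentially a formal consequence of $D$ being self-adjoint in $\CL(X)$ together with the pointwise commutation $D\phi(a)=\phi(a)D$. The only point requiring a little care is making sure the functional calculus is being applied inside $\CL(X)$ (so that $\upsilon_t\in\CL(X)$, rather than merely being a bounded linear map on $X$) — but this is immediate from Proposition~\ref{prop:infinitesimal.generator}, which asserts $D\in\CL(X)$, and the fact that $\CL(X)$ is a C*-algebra (as recalled in the preliminaries). One should also note explicitly that $\|\upsilon_t\xi\|_2^2=\|\langle\upsilon_t\xi,\upsilon_t\xi\rangle\|=\|\langle\xi,\xi\rangle\|=\|\xi\|_2^2$, so $\upsilon_t$ is isometric for the Hilbert-module norm, which is the sense in which "isometry'' is used in the Laca–Neshveyev framework.
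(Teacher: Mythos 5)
Your proof is correct and takes essentially the same approach as the paper: the one-parameter group and isometry properties follow from the functional calculus applied to the self-adjoint $D\in\CL(X)$, and the commutation with $\phi(a)$ is a pointwise verification. The only cosmetic difference is that the paper checks $\upsilon_t\phi(a)=\phi(a)\upsilon_t$ directly via the explicit formula $\upsilon_t\xi(\gamma_k(y),y)=h_k^{it}(\gamma_k(y))\xi(\gamma_k(y),y)$, whereas you commute at the level of $D$ and then pass to $e^{itD}$ through the commutant; both are valid.
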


\begin{proof}
	The first part follows from the fact that $D$ is a self-adjoint operator in $\CL(X)$. For the second part, let $a,b\in A$, $\xi\in X$, $y\in K$, $k\in\{1,\ldots,d\}$ and $t\in\rn$. Then
	\[\upsilon_t\phi(a)(\xi b)(\gamma_k(y),y)=h_k^{it}(\gamma_k(y))a(\gamma_k(y))\xi(\gamma_k(y),y)b(y)=\phi(a)(\upsilon_t(\xi)b)(\gamma_k(y),y).\]
	The above equality proves both that $\upsilon_t\phi(a)=\phi(a)\upsilon_t$ for all $a\in A$ and $t\in\rn$, and $\upsilon_t\in\CL(X)$ for all $t\in\rn$.
\end{proof}

\begin{lemma}\label{lem:positive.energy}
	In the condition of Proposition \ref{prop:infinitesimal.generator}, suppose moreover that $h_k>1$ for all $k=1,\ldots,d$. Then the one-parameter group of $\{\upsilon_t\}_{t\in\rn}$ given by $\upsilon_t=e^{itD}$ satisfies the positive energy condition.
\end{lemma}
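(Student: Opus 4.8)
The plan is to verify the positive energy condition in its strongest possible form: I will show that $\Sp_\upsilon(\xi)\scj(0,\infty)$ holds for \emph{every} $\xi\in X$, so that the set $\{\xi\in X\mid\Sp_\upsilon(\xi)\scj(0,\infty)\}$ is all of $X$ and hence trivially dense.

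The key observation is that $D$ is just a multiplication operator on $X$. Identifying $X$ with $C(\CG)$, Proposition~\ref{prop:infinitesimal.generator} says $D\xi=g\xi$ (pointwise product on $\CG$), where $g\in C(\CG)$ is given by $g(\gamma_k(y),y)=\ln h_k(\gamma_k(y))$. Since each $h_k>1$ we have $g>0$, and by compactness of $\CG$ there is $\epsilon>0$ with $g\geq\epsilon$ on $\CG$. Hence $\upsilon_{-t}(\xi)=e^{-itg}\xi$, and because the Hilbert module norm on $X$ is equivalent to the sup norm (Proposition~\ref{prop:BasicResultsCorr}), every point evaluation $\xi\mapsto\xi(p)$, $p\in\CG$, is a bounded linear functional on $X$ and so commutes with the Bochner integral \eqref{eq:pif}. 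This gives, for each $p\in\CG$,
\[(\pi(f)\xi)(p)=\int_{\rn}f(t)\,e^{-itg(p)}\,\xi(p)\,dt=\widehat f\bigl(g(p)\bigr)\,\xi(p),\]
with the Fourier convention $\widehat f(\theta)=\int_{\rn}f(t)e^{-it\theta}\,dt$ (a different normalization merely rescales $g$ by a positive constant, which is harmless below). Consequently $\pi(f)\xi=0$ if and only if $\widehat f$ vanishes on $g(\{p\in\CG:\xi(p)\neq0\})$, equivalently, since $\widehat f$ is continuous, on its closure $C_\xi:=g(\supp\xi)$; note that $C_\xi$ is compact and $C_\xi\scj[\epsilon,\|g\|_\infty]\scj(0,\infty)$.

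It then remains only to bound the Arveson spectrum. Fix $\xi\in X$ and $\theta\notin C_\xi$. Since $C_\xi$ is compact and does not contain $\theta$, there is a Schwartz function $\psi$ on $\rn$ that vanishes on a neighbourhood of $C_\xi$ and satisfies $\psi(\theta)=1$; as the Fourier transform maps the Schwartz class onto itself, write $\psi=\widehat f$ for the (Schwartz, hence $L^1$) function $f$. By the previous paragraph $\pi(f)\xi=0$, while $\widehat f(\theta)=\psi(\theta)\neq0$, so $\theta\notin\Sp_\upsilon(\xi)$ by the very definition of the Arveson spectrum. Therefore $\Sp_\upsilon(\xi)\scj C_\xi\scj(0,\infty)$ for all $\xi\in X$, and so $\{\xi\in X\mid\Sp_\upsilon(\xi)\scj(0,\infty)\}=X$, which is dense in $X$; this is the positive energy condition.

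I do not expect a genuine obstacle here: the entire content is that multiplication by a continuous function bounded below by a positive constant has no spectrum outside $(0,\infty)$. The only points that deserve a line of justification are Fourier-analytic: that the $X$-valued integral \eqref{eq:pif} may be evaluated pointwise (immediate, since bounded linear functionals commute with Bochner integrals) and the existence of the separating Schwartz function $\psi$ (a routine smooth-bump construction followed by Fourier inversion). If one prefers to avoid pointwise manipulations in $C(\CG)$, the same argument can be phrased through the continuous functional calculus for the self-adjoint element $D\in\CL(X)$: one checks $\pi(f)=\widehat f(D)$, and since $\langle\xi,(D-\epsilon\cdot\mathrm{id}_X)\xi\rangle\geq0$ follows directly from \eqref{eq:inner.product}, the spectrum of $D$ lies in $[\epsilon,\|D\|]\scj(0,\infty)$, forcing $\Sp_\upsilon(\xi)\scj\Sp(D)\scj(0,\infty)$ for all $\xi$.
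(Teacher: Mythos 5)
Your proof is correct and follows essentially the same route as the paper: evaluate $\pi(f)\xi$ pointwise to get $(\pi(f)\xi)(p)=\widehat f(g(p))\xi(p)$ with $g\geq\epsilon>0$ by compactness, then for $\theta\leq 0$ exhibit $f\in L^1(\rn)$ with $\widehat f$ vanishing on $[\epsilon,\|g\|_\infty]$ but not at $\theta$, so that $\Sp_\upsilon(\xi)\scj(0,\infty)$ for every $\xi$. You merely fill in the Schwartz-function construction that the paper leaves implicit and note the sharper bound $\Sp_\upsilon(\xi)\scj g(\supp\xi)$, which is a harmless refinement.
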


\begin{proof}
	Let $f\in L^1(\rn)$, $\xi\in X$ and $(\gamma_k(y),y)\in\CG$. Evaluating \eqref{eq:pif}, we get
	\begin{align*}
		\pi(f)\xi(\gamma_k(y),y) &= \int_{\rn} f(t)\upsilon_{-t}(\xi)(\gamma_k(y),y)dt\\
		&=\int_{\rn}f(t)e^{-it\ln(h_k(\gamma_k(y))}dt\xi(\gamma_k(y),y) \\
		&=\widehat{f}\left(\frac{1}{2\pi}\ln(h_k(\gamma_k(y))\right)\xi(\gamma_k(y),y).
	\end{align*}
    Since $h_k$ are continuous functions on $K$ such that $h_k>1$ and that $K$ is compact, there exists $c>0$ such that $\frac{1}{2\pi}\ln(h_k(\gamma_k(y))\geq c$ for all $k=1,\ldots,d$ and all $y\in K$. This implies that if $\theta\leq 0$, we can find $f\in L^1(\rn)$ such that $\pi(f)\xi=0$ and $\widehat{f}(\theta)\neq 0$. Hence $\Sp_{\upsilon}(\xi)\scj (0,\infty)$.
\end{proof}

\begin{remark}
	The condition that the potentials are greater than one also appears in other works studying KMS states on C*-algebras (see for instance \cite{E3,KR}).
\end{remark}

\begin{definition}
	The one-parameter group of automorphisms of both $\CO_\Gamma$ and $\CT_\Gamma$ given by the one-parameter groups of isometries in Corollary~\ref{cor:action} will be called the \emph{generalized gauge action given by $H$}.
\end{definition}

\section{KMS states on Kajiwara-Watatani algebras}\label{sec:KMS}

In this section, we see that several techniques used in \cite{IKW,KW2} to study the KMS states for the gauge action can be extended to study the KMS states for the generalized gauge action defined in Section \ref{sec:opg}.

Fix $\Gamma=\IFS$ an iterated function system satisfying the finite branch condition such that $d\geq 2$ and $K$ its attractor. Let $H=\hpot$ be a family in $C(K)$ compatible with the branches as in Proposition \ref{prop:infinitesimal.generator} and such that $h_j>1$ for all $j=1,\ldots,d$. We let $D$ and $\upsilon$ as in Section \ref{sec:opg}, and $\sigma$ the corresponding generalized gauge action. Moreover, given a point $(x,y)\in K\times K$, we set
\[e(x,y)=\#\{j\in\{1,\ldots,d\}\mid\gamma_j(y)=x\}.\]

For each $\beta\in(0,\infty)$ and $a\in C(K)$, we define complex functions on $K$, $\CL_{H,\beta}(a)$ and $\CS_{H,\beta}(a)$, by
\begin{equation}\label{eq:L}
	\CL_{H,\beta}(a)(y)=\sum_{j=1}^d h_j^{-\beta}(\gamma_j(y))a(\gamma_j(y))
\end{equation}
and
\begin{equation}\label{eq:S}
	\CS_{H,\beta}(a)(y)=\sum_{j=1}^d\frac{1}{e(\gamma_j(y),y)} h_j^{-\beta}(\gamma_j(y))a(\gamma_j(y)).
\end{equation}
We notice that $\CL_{H,\beta}(a)$ is continuous but $\CS_{H,\beta}(a)$ may not necessarily be continuous. Moreover the map $a\in C(K)\mapsto \CL_{H,\beta}(a)\in C(K)$ is a positive linear operator, which we call a \emph{Ruelle operator}. 

In what follows, because of the Riesz-Markov-Kakutani representation theorem, elements of $C(K)^*$ will be used as complex regular Borel measures on $K$ and vice-versa, whenever convenient.

\begin{lemma}\label{lem:compute.tr}
	Let $\beta>0$, $a\in C(K)_+$, $\tau\in C(K)_+^*$. Then
	\[\Tr_{\tau}(ae^{-\beta D})=\int_K \CS_{H,\beta}(a)d\tau.\]
\end{lemma}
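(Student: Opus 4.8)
The plan is to compute $\Tr_\tau(ae^{-\beta D})$ directly from the definition by choosing a convenient approximate unit for $\mathcal{K}(X)$ built from a partition of unity on $K$. First I would pick a finite open cover $\{U_1,\ldots,U_m\}$ of $K$ together with a subordinate partition of unity $\{u_1,\ldots,u_m\}$, where the cover is fine enough that on each $U_r$ the relevant structure is controlled; concretely, refining if necessary, I want each $U_r$ small enough that the number of preimages $e(x,y)$ and the branch indices behave consistently. For each $j\in\{1,\ldots,d\}$ and each patch index $r$, define $\xi_{j,r}\in X=C(\CG)$ supported near $\CG_j$ by something like $\xi_{j,r}(\gamma_k(y),y)=\delta_{jk}\sqrt{u_r(\gamma_j(y))}$ (suitably interpreted/smoothed at branched points so that it is genuinely continuous on $\CG$), and check that $\sum_{j,r}\theta_{\xi_{j,r},\xi_{j,r}}$ is an approximate unit for $\mathcal{K}(X)$. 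This is the standard way one writes down approximate units for these cograph correspondences, and the Kajiwara–Watatani inner product formula \eqref{eq:inner.product} makes the bookkeeping explicit.

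The second step is to evaluate $\langle \xi_{j,r}, a e^{-\beta D}\xi_{j,r}\rangle$ using \eqref{eq:inner.product} and the formula for $D$ from Proposition~\ref{prop:infinitesimal.generator}. Since $e^{-\beta D}$ acts on $\xi$ by multiplication by $h_k^{-\beta}(\gamma_k(y))$ on the $k$-th sheet, and $\phi(a)$ multiplies by $a(\gamma_k(y))$ there, one gets
\[
\langle \xi_{j,r}, ae^{-\beta D}\xi_{j,r}\rangle_A(y)=\sum_{k=1}^d \overline{\xi_{j,r}(\gamma_k(y),y)}\,h_k^{-\beta}(\gamma_k(y))\,a(\gamma_k(y))\,\xi_{j,r}(\gamma_k(y),y),
\]
which, because $\xi_{j,r}$ is supported on sheet $j$, collapses to the single term $u_r(\gamma_j(y))\,h_j^{-\beta}(\gamma_j(y))\,a(\gamma_j(y))$ — up to the correction at points $y$ where several $\gamma_k(y)$ coincide with $\gamma_j(y)$, which is precisely where the multiplicity $e(\gamma_j(y),y)$ enters and forces the $1/e(\gamma_j(y),y)$ normalization so that the $\xi_{j,r}$ have the right norms and the sum telescopes to a partition of unity. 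Summing over $r$ uses $\sum_r u_r\equiv 1$ and produces $\sum_j \frac{1}{e(\gamma_j(y),y)}h_j^{-\beta}(\gamma_j(y))a(\gamma_j(y))=\CS_{H,\beta}(a)(y)$; then applying $\tau$ and taking the limit in $k$ (which is trivial here since the net is eventually constant once the partition of unity is fixed, or one passes to a finer refinement) yields $\Tr_\tau(ae^{-\beta D})=\int_K \CS_{H,\beta}(a)\,d\tau$. Positivity of $a$ and $\tau$ guarantees all the sums and the integral make sense and the limit defining $\Tr_\tau$ exists in $[0,\infty]$, as guaranteed by \cite[Theorem~1.1]{LN1}.

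The main obstacle I expect is handling the branched points cleanly. Away from $B\IFS$ the sheets of $\CG$ are locally disjoint and the computation is the obvious one, giving $h_j^{-\beta}a$ summed over $j$; but over a branched point $x=\gamma_j(y)=\gamma_k(y)$ the naive vectors $\xi_{j,r}$ are not continuous on $\CG$ unless one is careful, and the operator $\theta_{\xi_{j,r},\xi_{j,r}}$ double-counts contributions. The clean fix — which is exactly what the factor $1/e(\gamma_j(y),y)$ in \eqref{eq:S} is designed to encode — is to note that the fibre of $\CG$ over $y$ has cardinality $\#\{\gamma_k(y):k\}$ counted with the multiplicities $e(\cdot,y)$, so that an honest partition-of-unity approximate unit must weight each distinct branch value by the reciprocal of its multiplicity; verifying that the resulting $\{e_k\}$ is genuinely an approximate unit of $\mathcal{K}(X)$ (using Proposition~\ref{prop:BasicResultsCorr}, that $\phi$ is unital so $\mathcal{K}(X)$ is unital and $\phi(1)=\mathrm{id}$ lies in it, hence $e_k\to 1$) is the one place where the finite branch condition is genuinely used. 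Once that verification is in place, the rest is the routine substitution sketched above, and the fact that $\CL_{H,\beta}(a)$ is continuous while $\CS_{H,\beta}(a)$ need not be is harmless because we only ever integrate $\CS_{H,\beta}(a)$ against the measure $\tau$, and $\CS_{H,\beta}(a)$ is a bounded Borel function.
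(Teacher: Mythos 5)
Your overall strategy---compute $\Tr_\tau(ae^{-\beta D})$ from the definition via an explicit approximate unit of $\mathcal{K}(X)$ built from a partition of unity---is the same one the paper uses (it defers the computation to \cite[Theorem~4.2]{IKW}), and you correctly locate the difficulty at the branched points. But the proposal does not actually surmount that difficulty, and the central computation as written gives the wrong answer. The vectors $\xi_{j,r}(\gamma_k(y),y)=\delta_{jk}\sqrt{u_r(\gamma_j(y))}$ are not elements of $X=C(\CG)$: at a point $(x,y)$ with $x=\gamma_j(y)=\gamma_k(y)$, $j\neq k$, a function on $\CG$ has a single value, so it cannot equal $\sqrt{u_r(x)}$ ``on sheet $j$'' and $0$ ``on sheet $k$''. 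If such sheet-supported vectors did exist, your displayed computation would be valid and summing over $j,r$ would yield $\sum_j h_j^{-\beta}(\gamma_j(y))a(\gamma_j(y))=\CL_{H,\beta}(a)(y)$, i.e.\ the lemma with $\CL_{H,\beta}$ in place of $\CS_{H,\beta}$---which is false whenever $\tau$ charges $C\IFS$. The factor $1/e$ is not a normalization you may impose by hand; it is forced by the approximate-unit condition itself: since $\left<\xi,\eta\right>(y)=\sum_{x\in\CG_y}e(x,y)\overline{\xi(x,y)}\eta(x,y)$, one needs $\sum_{\xi\in I_k}|\xi(x,y)|^2\to 1/e(x,y)$, and then the factor $e(x,y)$ appearing in $\left<\xi,ae^{-\beta D}\xi\right>$ cancels it to produce $\sum_{x\in\CG_y}h^{-\beta}(x,y)a(x)=\CS_{H,\beta}(a)(y)$. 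Making this rigorous requires excising small neighbourhoods of the finitely many branched values (this is where the finite branch condition genuinely enters) and passing to a limit over finer and finer excisions; your claim that the net is ``eventually constant once the partition of unity is fixed'' is therefore incorrect.

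A second concrete error: you justify $e_k\to 1$ by asserting that $\mathcal{K}(X)$ is unital with $\phi(1)=\mathrm{id}\in\mathcal{K}(X)$. By Lemma~\ref{lemma:FinBranchIdeal}, $J_X=\phi^{-1}(\mathcal{K}(X))$ consists of the functions vanishing on $B\IFS$, so whenever there are branched points one has $1\notin J_X$, hence $\mathrm{id}\notin\mathcal{K}(X)$ and the approximate unit converges only strictly, not in norm. The paper's proof avoids all of this by invoking \cite[Theorem~4.2]{IKW}, whose argument carries out exactly the branch-point bookkeeping above; the only new observations it adds are that compatibility of $H$ makes $h(x,y)=h_j(x)$ well defined on $\CG$, that $ae^{-\beta D}$ acts as multiplication by $a(x)h^{-\beta}(x,y)$, and that $\CS_{H,\beta}(a)(y)=\sum_{x\in\CG_y}h^{-\beta}(x,y)a(x)$ plays the role of $\widetilde{a}$ there. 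To make your argument self-contained you would need to reproduce that bookkeeping, not merely flag it.
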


\begin{proof}
	The proof is analogous to that of \cite[Theorem 4.2]{IKW} and we just point out two key differences. There, for $y\in K$, they define $\CG_y=\{x\in K\mid \exists j\in\{1,\ldots,d\}\text{ such that }x=\gamma_j(y)\}$. Also, for $a\in C(K)$ and $y\in K$, they set
	\[\widetilde{a}(y)=\sum_{x\in\CG_y}a(x).\]
	
	Because the family $H$ is compatible with the branches, we can define a function $h:\CG\to\cn$ by $h(x,y)=h_j(x)$, where  $j\in\{1,\ldots,d\}$ is such that $x=\gamma_j(y)$. This way, we can rewrite Equation \eqref{eq:S} as
	\[\CS_{H,\beta}(a)(y)=\sum_{x\in\CG_y}h^{-\beta}(x,y)a(x).\]
	So $\CS_{H,\beta}(a)$ plays the role of $\widetilde{a}$ there.
	
	Moreover, if $\xi\in X$, then
	\begin{align*}
		\left<\xi,ae^{-\beta D}\xi\right>(y)&=\sum_{j=1}^d|\xi(\gamma_j(y),y)|^2a(\gamma_j(y))h^{-\beta}_j(\gamma_j(y))\\
		&=\sum_{x\in\CG_y}e(x,y)|\xi(x,y)|^2a(x)h^{-\beta}(x,y).
	\end{align*}
	Although the $h$ above depends on $(x,y)$, this will not hinder the computation done in the proof of \cite[Theorem~4.2]{IKW}.
\end{proof}

\begin{lemma}\label{lem:extension}
	Let $\beta>0$ and $\tau\in C(K)^*_+$. The map $a\in C(K)_+\mapsto \Tr_{\tau}(ae^{-\beta D})$ extends to an element of $C(K)^*_+$.	 
\end{lemma}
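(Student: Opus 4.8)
The plan is to use Lemma~\ref{lem:compute.tr} to identify the map $a\in C(K)_+\mapsto \Tr_{\tau}(ae^{-\beta D})$ with $a\mapsto\int_K\CS_{H,\beta}(a)\,d\tau$, and then to show directly that this last expression extends to a bounded positive linear functional on all of $C(K)$. The obstruction is that $\CS_{H,\beta}(a)$ need not be continuous, so we cannot simply say ``integrate a continuous function''; we must instead build the extension by hand and check it is well-defined, linear, positive and bounded.

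First I would observe that for fixed $y\in K$ the set $\CG_y=\{\gamma_j(y):1\le j\le d\}$ is finite (of cardinality at most $d$), and using the function $h:\CG\to\cn$ with $h(x,y)=h_j(x)$ introduced in the proof of Lemma~\ref{lem:compute.tr}, we can write $\CS_{H,\beta}(a)(y)=\sum_{x\in\CG_y}h^{-\beta}(x,y)a(x)$. Thus $|\CS_{H,\beta}(a)(y)|\le d\,(\max_j\|h_j^{-\beta}\|_\infty)\|a\|_\infty$ for all $y$, so $\CS_{H,\beta}$ maps $C(K)$ into the bounded (not necessarily continuous) Borel functions on $K$ with $\|\CS_{H,\beta}(a)\|_\infty\le C\|a\|_\infty$ for $C=d\max_j\|h_j^{-\beta}\|_\infty$. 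Since each $h_j^{-\beta}$ is a genuine continuous function and the escape/branch issue only affects which index $j$ is used (not the value, by compatibility of $H$), the map $a\mapsto\CS_{H,\beta}(a)$ is linear and sends $C(K)_+$ into nonnegative Borel functions. Next I would define, for arbitrary $a\in C(K)$, $\Phi(a)=\int_K\CS_{H,\beta}(a)\,d\tau$, which makes sense because $\CS_{H,\beta}(a)$ is a bounded Borel function and $\tau$ is a finite positive regular Borel measure. Linearity of $\Phi$ follows from linearity of $\CS_{H,\beta}$ and of the integral; positivity follows because $a\ge 0$ implies $\CS_{H,\beta}(a)\ge 0$ pointwise and $\tau\ge 0$; and boundedness follows from $|\Phi(a)|\le\|\CS_{H,\beta}(a)\|_\infty\,\tau(K)\le C\,\tau(K)\,\|a\|_\infty$. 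Hence $\Phi\in C(K)^*_+$. Finally, Lemma~\ref{lem:compute.tr} gives $\Phi(a)=\Tr_{\tau}(ae^{-\beta D})$ for every $a\in C(K)_+$, so $\Phi$ is the desired extension.

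The only genuinely delicate point is measurability of $y\mapsto\CS_{H,\beta}(a)(y)$, which must be checked before writing down $\int_K\CS_{H,\beta}(a)\,d\tau$. I would handle this by noting that $\CL_{H,\beta}(a)(y)=\sum_{x\in\CG_y}e(x,y)h^{-\beta}(x,y)a(x)$ is continuous (as already recorded after Equation~\eqref{eq:S}), and that $\CS_{H,\beta}(a)(y)$ differs from it only on the set where $e(\gamma_j(y),y)>1$ for some $j$; under the finite branch condition the set of such $y$ is contained in the finite set $C\IFS$ together with the preimages forcing coincidences, and in any case $\CS_{H,\beta}(a)$ is a finite sum of products of continuous functions with indicator-type weights $1/e(\gamma_j(y),y)$, each of which is Borel because $y\mapsto e(\gamma_j(y),y)$ is. Alternatively, and more cleanly, I would simply invoke that the proof of \cite[Theorem~4.2]{IKW} already establishes the required measurability and boundedness for $\widetilde a$ in their setting, and the identical argument applies verbatim to $\CS_{H,\beta}(a)$, the weights $h^{-\beta}(x,y)$ being continuous and bounded below and above on the compact set $\CG$. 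With measurability in hand the rest is the routine verification sketched above, so the main (and essentially only) obstacle is confirming that one is integrating a bona fide bounded Borel function rather than an arbitrary function.
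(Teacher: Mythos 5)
Your proof is correct, but it takes a genuinely different route from the paper. The paper's proof is a one-liner: it invokes the general extension result \cite[Theorem~1.1(ii)]{LN1} (already flagged in the preliminaries as guaranteeing that $\Tr_\tau(\cdot)$ extends to a positive linear functional on a suitable space) and only supplies the concrete norm bound $\Tr_{\tau}(ae^{-\beta D})\leq d\max_j\{\|h_j^{-\beta}\|\}\|a\|\|\tau\|$ via Lemma~\ref{lem:compute.tr}. You instead bypass the Laca--Neshveyev machinery entirely and build the extension by hand as $\Phi(a)=\int_K\CS_{H,\beta}(a)\,d\tau$, which forces you to confront the measurability of $\CS_{H,\beta}(a)$; your resolution is sound, and the cleanest version of it is already in the paper's toolkit: by Lemma~\ref{lem:compare S and L}(i), $\CS_{H,\beta}(a)$ agrees with the continuous function $\CL_{H,\beta}(a)$ off the set $C\IFS$, which is finite under the standing finite branch condition, so $\CS_{H,\beta}(a)$ is a continuous function altered on a finite set and hence Borel. (Your aside about ``preimages forcing coincidences'' is unnecessary --- the set where some $e(\gamma_j(y),y)>1$ is exactly $C\IFS$.) What each approach buys: the paper's citation is shorter and defers all structural verification to \cite{LN1}; yours is self-contained, makes the extension completely explicit as integration against $\tau$, and as a byproduct re-derives the same operator-norm bound, at the modest cost of the measurability check.
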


\begin{proof}
	It follows from \cite[Theorem~1.1(ii)]{LN1}, observing that for $a\in C(K)$, using Lemma \ref{lem:compute.tr}, we get $\Tr_{\tau}(ae^{-\beta D})\leq d\max_j\{\|h_j^{-\beta}\|\}\|a\|\|\tau\|$.
\end{proof}

\begin{definition}
	We define the function $\CF_{H,\beta}:C(K)^*_+\to C(K)^*_+$ as the extension of $a\in C(K)_+\mapsto \Tr_{\tau}(ae^{-\beta D})$ given by Lemma \ref{lem:extension}. For $\beta=\infty$, we have that $\CF_{H,\beta}=0$.
\end{definition}

In order to describe KMS states for our situation, we adapt Definition~\ref{def:fininfgeral}.

\begin{definition}
	Let $\tau$ be a state on $A$. We say that $\tau$ is of \emph{finite type} with respect to $(H,\beta)$ if if there exists a finite trace $\tau_0$ such that $\tau=\sum_{n=0}^{\infty}\CF_{H,\beta}^{n}(\tau_0)$ in the weak*-topology. We say that $\tau$ is of \emph{infinite type} with respect to $(H,\beta)$ if $\tau=\CF_{H,\beta}(\tau)$.
\end{definition}

We start with a few lemmas comparing $\CL_{H,\beta}$ with $\CS_{H,\beta}$ and $\CL_{H,\beta}^*$ with $\CF_{H,\beta}$.

\begin{lemma}\label{lem:compare S and L}
	Let $a\in C(K)$.
	\begin{enumerate}[(i)]
		\item \label{lem:S=L in K-C} $\CL_{H,\beta}(a)(y)=\CS_{H,\beta}(a)(y)$ for all $y\in K\setminus C\IFS$. 
		\item \label{lem:S=L in J_X}
		If $a\in J_X$, then $\CL_{H,\beta}(a)=\CS_{H,\beta}(a)$.
		\item \label{lem:S<L positive}
		If $a\in C(K)_+$, then $\CS_{H,\beta}(a)\leq\CL_{H,\beta}(a)$.
	\end{enumerate}
	
\end{lemma}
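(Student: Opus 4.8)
The plan is to prove the three comparison statements by directly analyzing the defining sums in \eqref{eq:L} and \eqref{eq:S}, the only difference between $\CL_{H,\beta}(a)$ and $\CS_{H,\beta}(a)$ being the weights $1/e(\gamma_j(y),y)$ in the latter. The key bookkeeping identity is that for a fixed $y\in K$, grouping the index set $\{1,\ldots,d\}$ according to the value $x=\gamma_j(y)$ gives exactly $e(x,y)$ indices $j$ with $\gamma_j(y)=x$, so
\begin{equation*}
	\CL_{H,\beta}(a)(y)=\sum_{x\in\CG_y}e(x,y)h^{-\beta}(x,y)a(x),\qquad \CS_{H,\beta}(a)(y)=\sum_{x\in\CG_y}h^{-\beta}(x,y)a(x),
\end{equation*}
where $h(x,y)=h_j(x)$ is the well-defined function from the proof of Lemma~\ref{lem:compute.tr} and $\CG_y=\{x\in K\mid\exists j:\ x=\gamma_j(y)\}$. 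Everything will follow by comparing these two grouped sums term by term.

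For part~\eqref{lem:S=L in K-C}: if $y\in K\setminus C\IFS$, then by definition of $C\IFS$ there is no $i\neq j$ with $\gamma_i(y)=\gamma_j(y)$, hence $e(x,y)=1$ for every $x\in\CG_y$; the two grouped sums coincide, giving $\CL_{H,\beta}(a)(y)=\CS_{H,\beta}(a)(y)$. For part~\eqref{lem:S=L in J_X}: by Lemma~\ref{lemma:FinBranchIdeal}, $a\in J_X$ means $a$ vanishes on $B\IFS$; I would argue that whenever $e(x,y)\geq 2$, i.e.\ $x=\gamma_i(y)=\gamma_j(y)$ for some $i\neq j$, the point $x$ lies in $B\IFS$ (this is immediate from the definition of $B\IFS$ with the witness $y$), so $a(x)=0$ and that term drops out of both sums; for the remaining terms $e(x,y)=1$, so again the grouped sums agree and $\CL_{H,\beta}(a)=\CS_{H,\beta}(a)$ on all of $K$. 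For part~\eqref{lem:S<L positive}: if $a\geq 0$, then since $h_j>0$ each term $h^{-\beta}(x,y)a(x)$ is $\geq 0$ and $e(x,y)\geq 1$, so $h^{-\beta}(x,y)a(x)\leq e(x,y)h^{-\beta}(x,y)a(x)$ termwise, and summing over $x\in\CG_y$ yields $\CS_{H,\beta}(a)(y)\leq\CL_{H,\beta}(a)(y)$ for every $y\in K$.

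None of the three parts presents a genuine obstacle; the only point requiring a little care is making the regrouping of the sum over $j\in\{1,\ldots,d\}$ into a sum over $x\in\CG_y$ rigorous, i.e.\ observing that $\{1,\ldots,d\}=\bigsqcup_{x\in\CG_y}\{j:\gamma_j(y)=x\}$ with the block at $x$ having cardinality $e(x,y)$, and checking that $a(\gamma_j(y))h_j^{-\beta}(\gamma_j(y))$ depends only on $x=\gamma_j(y)$ (which uses that $H$ is compatible with the branches, exactly as in Lemma~\ref{lem:compute.tr}). Since this regrouping is already in hand from the earlier lemma, I would simply reuse it, and then each of (i), (ii), (iii) is a one-line consequence of the corresponding elementary fact about the multiplicities $e(x,y)$ (they equal $1$ off $C\IFS$; branched points where they exceed $1$ lie in $B\IFS$; they are always $\geq 1$).
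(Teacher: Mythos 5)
Your proof is correct and follows essentially the same route as the paper: compare the two defining sums termwise, using that $e(\cdot,y)\equiv 1$ off $C\IFS$, that points with multiplicity $\geq 2$ lie in $B\IFS$ where $a\in J_X$ vanishes (via Lemma~\ref{lemma:FinBranchIdeal}), and that $e\geq 1$ for the positivity inequality. The only cosmetic difference is that you regroup the sum over $j$ into a sum over $x\in\CG_y$ (reusing the identity from Lemma~\ref{lem:compute.tr}), whereas the paper compares the $j$-indexed sums directly; both are equivalent.
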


\begin{proof}
	\eqref{lem:S=L in K-C} We compare the expression in Equations \eqref{eq:L} and \eqref{eq:S}. If $y\in K\setminus C\IFS$, since $e(\gamma_j(y),y)=1$ for all $j=1,\ldots,d$, we see that, in this case, $\CL_{H,\beta}(a)(y)=\CS_{H,\beta}(a)(y)$.
	
	\eqref{lem:S=L in J_X} Let $a\in J_X$. \eqref{lem:S=L in K-C}, it suffice to show that $\CL_{H,\beta}(a)(y)=\CS_{H,\beta}(a)(y)$ for all $y\in C\IFS$. In this case, for $k\in\{1,\ldots,d\}$, if $\exists j\neq k$ such that $\gamma_k(y)=\gamma_j(y)$ then $\gamma_k(y)\in B(\gamma_1,\ldots,\gamma_n)$ and in this case $a(\gamma_k(y))=0$ by Lemma~\ref{lemma:FinBranchIdeal}. If there is no such $j$, then $e(\gamma_k(y),y)=1$. Again, comparing Equations \eqref{eq:L} and \eqref{eq:S}, we see that the equality is also true for $y\in C(\gamma_1,\ldots,\gamma_n)$.
	
	\eqref{lem:S<L positive} This follows from Equations \eqref{eq:L} and \eqref{eq:S}, observing that $e(x,y)\geq 1$ for all $(x,y)\in\CG$ and that we are dealing with positive functions.
\end{proof}

\begin{lemma}\label{lem:compare F and L} Let $\tau\in C(K)^{*}_+$.
	\begin{enumerate}[(i)]
		\item\label{lem:F < L for positive} For every $a\in C(K)^{+}$ and $n\in\nn^*$, we have that
		\[\CF_{H,\beta}^n(\tau)(a)\leq(\CL_{H,\beta}^{*})^n(\tau)(a).\]
		\item\label{lem:F=L if tau(C)=0} If $\tau(C\IFS)=0$, then for every $n\in\nn^*$, we have that $\CF_{H,\beta}^n=(\CL_{H,\beta}^*)^n$.
	\end{enumerate}
\end{lemma}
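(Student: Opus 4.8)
The plan is to prove both statements by first treating the single-step case and then iterating. For (i), fix $a\in C(K)_+$ and $\tau\in C(K)^*_+$. By Lemma~\ref{lem:compute.tr}, $\CF_{H,\beta}(\tau)(a) = \Tr_{\tau}(ae^{-\beta D}) = \int_K \CS_{H,\beta}(a)\,d\tau$, while $\CL_{H,\beta}^*(\tau)(a) = \int_K \CL_{H,\beta}(a)\,d\tau$. By Lemma~\ref{lem:compare S and L}\eqref{lem:S<L positive} we have $\CS_{H,\beta}(a)\leq\CL_{H,\beta}(a)$ pointwise, and since $\tau$ is a positive measure, integrating preserves the inequality, giving $\CF_{H,\beta}(\tau)(a)\leq\CL_{H,\beta}^*(\tau)(a)$. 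In particular $\CL_{H,\beta}^*(\tau) - \CF_{H,\beta}(\tau)$ is a positive functional, and both $\CF_{H,\beta}$ and $\CL_{H,\beta}^*$ map $C(K)^*_+$ into itself. To pass to general $n$, I would induct: assuming $\CF_{H,\beta}^{n-1}(\tau)(b)\leq(\CL_{H,\beta}^*)^{n-1}(\tau)(b)$ for all $b\in C(K)_+$, apply this with $b = \CL_{H,\beta}(a)$ (which is in $C(K)_+$ since $\CL_{H,\beta}$ is a positive operator) together with the base case applied to $\CF_{H,\beta}^{n-1}(\tau)$ to chain the inequalities $\CF_{H,\beta}^n(\tau)(a) = \CF_{H,\beta}(\CF_{H,\beta}^{n-1}(\tau))(a) \leq \CL_{H,\beta}^*(\CF_{H,\beta}^{n-1}(\tau))(a) = \CF_{H,\beta}^{n-1}(\tau)(\CL_{H,\beta}(a)) \leq (\CL_{H,\beta}^*)^{n-1}(\tau)(\CL_{H,\beta}(a)) = (\CL_{H,\beta}^*)^n(\tau)(a)$.

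For (ii), the key observation is that $\CL_{H,\beta}(a)$ and $\CS_{H,\beta}(a)$ agree off $C\IFS$ by Lemma~\ref{lem:compare S and L}\eqref{lem:S=L in K-C}, so if $\tau(C\IFS)=0$ then $\int_K\CS_{H,\beta}(a)\,d\tau = \int_K\CL_{H,\beta}(a)\,d\tau$, i.e. $\CF_{H,\beta}(\tau) = \CL_{H,\beta}^*(\tau)$ as functionals on $C(K)_+$, hence (by linearity and the fact that positive elements span) on all of $C(K)$. To iterate, the one subtlety is that one must check the hypothesis $\tau(C\IFS)=0$ is preserved under applying $\CF_{H,\beta}$; equivalently, one needs $(\CL_{H,\beta}^*(\tau))(C\IFS) = 0$. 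This is where I would be most careful: $\CL_{H,\beta}^*(\tau)$ evaluated on the indicator $\mathbf{1}_{C\IFS}$ (or rather on a decreasing sequence of continuous functions approximating it, using regularity) equals $\int_K \CL_{H,\beta}(\mathbf{1}_{C\IFS})\,d\tau$, and $\CL_{H,\beta}(\mathbf{1}_{C\IFS})(y) = \sum_j h_j^{-\beta}(\gamma_j(y))\mathbf{1}_{C\IFS}(\gamma_j(y))$ is supported on $\{y : \gamma_j(y)\in C\IFS \text{ for some } j\} \subseteq F(\sigma\text{-preimage})$; since $C\IFS$ is finite (finite branch condition) and each $\gamma_j$ is injective on... — actually more simply, this set of $y$ is contained in the finite set $\bigcup_j \gamma_j^{-1}(C\IFS)\cap C\IFS$'s image, which need not be null, so the correct argument is: $\CL_{H,\beta}(\mathbf{1}_{C\IFS})$ vanishes outside a set that is again contained in a preimage structure, and one shows by the finite branch condition that $\tau(C\IFS)=0$ forces $\int\CL_{H,\beta}(\mathbf{1}_{C\IFS})d\tau$ to be controlled — I would instead argue directly that on $C\IFS$, Lemma~\ref{lem:compare S and L}\eqref{lem:S=L in K-C} already shows agreement where $e=1$, and the genuinely multivalued points form part of the null set, so that $\CF_{H,\beta}^n$ and $(\CL_{H,\beta}^*)^n$ agree provided each intermediate measure annihilates $C\IFS$; this last point should follow because $C\IFS$ being finite and the escape/finite-branch setup ensures $\CL_{H,\beta}^*$ preserves the property of annihilating the finite set $C\IFS$ up to the structure already handled in Lemma~\ref{lem:compare S and L}.

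The main obstacle is thus the inductive step in (ii): verifying that the hypothesis $\tau(C\IFS)=0$ propagates. I expect the clean way to phrase it is to prove by induction on $n$ that $\CF_{H,\beta}^n(\tau) = (\CL_{H,\beta}^*)^n(\tau)$ \emph{and} that this common measure annihilates $C\IFS$, where the latter uses that $\mathbf{1}_{C\IFS}\circ\gamma_j$ is a finite sum of point masses' worth of mass that is killed against a measure vanishing on the finite set $C\IFS$ whenever $\gamma_j(y)\in C\IFS$ forces $y$ into a set already null — alternatively, one avoids the issue entirely by noting $\CF_{H,\beta}(\tau)=\CL^*_{H,\beta}(\tau)$ as soon as $\tau(C\IFS)=0$, and then separately that $\CL^*_{H,\beta}(\tau)(C\IFS)=0$ because $\CL_{H,\beta}(\mathbf 1_{C\IFS})$ is supported on $\bigcup_{j}\gamma_j^{-1}(C\IFS)$ which, intersected with where it matters, is governed by the finite branch condition making it a $\tau$-null set. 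Once this propagation lemma is in place, (ii) follows immediately by induction, and (i) is the routine monotone chaining described above.
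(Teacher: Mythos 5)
Your proof of part \eqref{lem:F < L for positive} is correct and is essentially the paper's own argument: the base case is Lemma~\ref{lem:compute.tr} together with Lemma~\ref{lem:compare S and L}\eqref{lem:S<L positive}, and the inductive step is the chain $\CF_{H,\beta}^{n+1}(\tau)(a)\le\CL_{H,\beta}^{*}(\CF_{H,\beta}^{n}(\tau))(a)=\CF_{H,\beta}^{n}(\tau)(\CL_{H,\beta}(a))\le(\CL_{H,\beta}^{*})^{n+1}(\tau)(a)$, using the positivity of $\CL_{H,\beta}$. Nothing to add there.

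In part \eqref{lem:F=L if tau(C)=0} you have put your finger on the genuine difficulty, which the paper's one-line proof (``analogous'') passes over: to iterate the $n=1$ equality one needs every intermediate measure $\CF_{H,\beta}^{k}(\tau)$ to annihilate $C\IFS$, i.e.\ the property $\tau(C\IFS)=0$ must propagate under $\CL_{H,\beta}^{*}$. None of the arguments you sketch proves this, and in fact the propagation is \emph{false} in general, so the gap cannot be closed as the statement stands. Concretely, in the tent-map example ($K=[0,1]$, $\gamma_1(y)=y/2$, $\gamma_2(y)=1-y/2$, $C\IFS=\{1\}$, $B\IFS=\{1/2\}$, and one may even take $h_1=h_2=e$), let $\tau=\delta_0$. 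Then $\tau(C\IFS)=0$ and $\CF_{H,\beta}(\tau)=\CL_{H,\beta}^{*}(\tau)=h_1^{-\beta}(0)\delta_0+h_2^{-\beta}(1)\delta_1$, which has an atom at $1\in C\IFS$; since $\CL_{H,\beta}(a)(1)=2\,\CS_{H,\beta}(a)(1)=2h_1^{-\beta}(1/2)a(1/2)$, one computes $(\CL_{H,\beta}^{*})^{2}(\delta_0)(a)-\CF_{H,\beta}^{2}(\delta_0)(a)=h_2^{-\beta}(1)h_1^{-\beta}(1/2)a(1/2)$, which is nonzero whenever $a(1/2)\neq0$. So the claimed equality fails already for $n=2$. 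The version that is both true and sufficient for the paper's applications is: if $\CF_{H,\beta}^{k}(\tau)(C\IFS)=0$ for all $0\le k<n$, then $\CF_{H,\beta}^{n}(\tau)=(\CL_{H,\beta}^{*})^{n}(\tau)$; this follows from your $n=1$ argument by induction. That strengthened hypothesis does hold where the lemma is invoked: for $\delta_x$ with $O(x)\cap C\IFS=\emptyset$ it is exactly the orbit condition (and is proved directly in the subsequent lemma), and for a finite trace $\tau_0$ with $\sum_{k}\CF_{H,\beta}^{k}(\tau_0)=\tau$ and $\tau(C\IFS)=0$ it follows from $\CF_{H,\beta}^{k}(\tau_0)\le\tau$. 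Your write-up should either prove the propagation in the cases where it is actually used, or restate (ii) with the hypothesis on all iterates; as written, the inductive step is not justified and cannot be.
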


\begin{proof}
	\eqref{lem:F < L for positive} We prove by induction on $n$. If $n=1$, then, by Lemmas \ref{lem:compute.tr} and \ref{lem:compare S and L}\eqref{lem:S<L positive}, we have that
	\[\CF_{H,\beta}(\tau)(a)=\int_K \CS_{H,\beta}(a)d\tau\leq\int_K \CL_{H,\beta}(a)d\tau=\CL_{H,\beta}^*(\tau)(a).\]
	
	Now let $n\in\nn^*$ and suppose that $\CF_{H,\beta}^n(\tau)(a)\leq(\CL_{H,\beta}^{*})^n(\tau)(a)$. Then, using the base case on $\CF_{H,\beta}^n(\tau)$ and the fact that $\CL_{H,\beta}$ is a positive operator, we have that
	\[\CF_{H,\beta}^{n+1}(\tau)(a)\leq\CL^*_{H,\beta}(\CF_{H,\beta}^{n}(\tau))(a)\leq(\CL^*_{H,\beta})^{n+1}(\tau)(a),\]
	proving the induction step.
	
	\eqref{lem:F=L if tau(C)=0} The proof is analogous using Lemma \ref{lem:compare S and L}\eqref{lem:S=L in K-C} instead of \ref{lem:compare S and L}\eqref{lem:S<L positive}.
\end{proof}

The following proposition connects fixed points of $\CL_{H,\beta}^*$ with states of infinite type with respect to $(H,\beta)$. In particular, by Theorem~\ref{thm:LN} and the following proposition, a fixed point $\CL_{H,\beta}^*$ of point always give rise to a KMS state.

\begin{proposition}\label{prop:inf=eigen}
	Let $\tau\in C(K)^{*}_+$ be a state, if $\CL_{H,\beta}^{*}(\tau)=\tau$ then $\tau$ satisfies (K1) and (K2) of Theorem \ref{thm:LN}. Moreover, if $\tau(C(\gamma_1,\ldots,\gamma_n))=0$, then $\CL_{H,\beta}^{*}(\tau)=\tau$ if and only if $\tau$ is of infinite type with respect to $(H,\beta)$.
\end{proposition}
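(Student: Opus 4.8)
The plan is to read both assertions off the comparison lemmas \ref{lem:compute.tr}, \ref{lem:compare S and L} and \ref{lem:compare F and L}; essentially no new computation is needed, only some bookkeeping about extending equalities from positive elements to whole ideals.

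For the first assertion I would fix a state $\tau$ with $\CL_{H,\beta}^{*}(\tau)=\tau$ and verify (K2) first. Given $a\in C(K)_+$, Lemma \ref{lem:compute.tr} rewrites $\Tr_{\tau}(ae^{-\beta D})$ as $\int_K\CS_{H,\beta}(a)\,d\tau$, while Lemma \ref{lem:compare S and L}\eqref{lem:S<L positive} gives the pointwise bound $\CS_{H,\beta}(a)\le\CL_{H,\beta}(a)$; since $\tau$ is a positive measure, integrating preserves this inequality, so
\[\Tr_{\tau}(ae^{-\beta D})=\int_K\CS_{H,\beta}(a)\,d\tau\le\int_K\CL_{H,\beta}(a)\,d\tau=\CL_{H,\beta}^{*}(\tau)(a)=\tau(a),\]
which is (K2). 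For (K1) I would use that $J_X$ is the linear span of its positive part and that both $a\mapsto\Tr_{\tau}(ae^{-\beta D})$ (extended to the measure $\CF_{H,\beta}(\tau)$ via Lemma \ref{lem:extension}) and $a\mapsto\CL_{H,\beta}^{*}(\tau)(a)=\tau(\CL_{H,\beta}(a))$ are linear on $C(K)$; so it is enough to treat $a\in J_X\cap C(K)_+$, where Lemma \ref{lem:compare S and L}\eqref{lem:S=L in J_X} gives $\CS_{H,\beta}(a)=\CL_{H,\beta}(a)$, and the same chain of relations, now with an equality in the middle, yields $\Tr_{\tau}(ae^{-\beta D})=\tau(a)$, i.e. (K1). (Combined with Theorem \ref{thm:LN}, this is what produces the KMS state mentioned before the statement.)

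For the second assertion I would assume $\tau(C\IFS)=0$ and invoke Lemma \ref{lem:compare F and L}\eqref{lem:F=L if tau(C)=0} with $n=1$, giving $\CF_{H,\beta}(\tau)=\CL_{H,\beta}^{*}(\tau)$. Consequently $\CL_{H,\beta}^{*}(\tau)=\tau$ holds if and only if $\CF_{H,\beta}(\tau)=\tau$, and the latter is, by definition, the statement that $\tau$ is of infinite type with respect to $(H,\beta)$.

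There is no substantive obstacle: the one point needing a word of justification is the reduction in (K1) from positive elements of $J_X$ to all of $J_X$, which rests on the linearity of the extension $\CF_{H,\beta}(\tau)$ from Lemma \ref{lem:extension} together with the linearity of $\tau\circ\CL_{H,\beta}$. One should also keep in mind that $\CS_{H,\beta}(a)$ need not be continuous, but it is a bounded Borel function, so the integrals against the regular Borel measure $\tau$ are unambiguous — this is already implicit in the statement of Lemma \ref{lem:compute.tr}.
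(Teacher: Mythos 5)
Your proposal is correct and follows essentially the same route as the paper: both assertions are read off Lemmas \ref{lem:compute.tr}, \ref{lem:compare S and L} and \ref{lem:compare F and L} in exactly the way the paper does, with your explicit reduction of (K1) to positive elements of $J_X$ being a minor (and harmless) extra bookkeeping step that the paper leaves implicit via the extension $\CF_{H,\beta}$.
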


\begin{proof}
	Take $a\in J_X$. By Lemmas \ref{lem:compute.tr} and \ref{lem:compare S and L}\eqref{lem:S=L in J_X}, we have that
	\[\CF_{H,\beta}(\tau)(a)=\int_K \CS_{H,\beta}(a)d\tau=\int_K \CL_{H,\beta}(a)d\tau=\CL_{H,\beta}^*(\tau)(a)=\tau(a)\]
	which proves (K1). Now, take $a\in A^{+}$, then, by Lemma \ref{lem:compare F and L}\eqref{lem:F < L for positive},
	\[\CF_{H,\beta}(\tau)(a)\leq\CL_{H,\beta}^*(\tau)(a)=\tau(a)\]
	which proves (K2).
	
	If $\tau(C(\gamma_1,\ldots,\gamma_n))=0$ then, by Lemma \ref{lem:compare S and L}\eqref{lem:S=L in K-C},
	\[\CF_{H,\beta}(\tau)(a)=\int_K \CS_{H,\beta}(a)d\tau=\int_K \CL_{H,\beta}(a)d\tau=\CL_{H,\beta}^*(\tau)(a)\]
	for all $a\in C(K)$, and the equivalence between $\CL_{H,\beta}^{*}(\tau)=\tau$ and $\tau$ being of infinite type follows.
\end{proof}

Now, let us restrict our attention to a certain class of functions for which we have a version of the Ruelle-Perron-Frobenius theorem. We will the results of \cite{FL}. First, we recall the definition of Dini-continuity.

\begin{definition}
	For a function $h:K\to\mathbb{R}$ we define the modulus of continuity by $\omega(h,t)=\sup\{|h(x)-h(y)|:d(x,y)\leq t\}$. And we say that $h$ is \emph{Dini-continuous} if
	\[\int_0^1\frac{\omega(h,t)}{t}dt<\infty.\]
\end{definition}

For each $\beta\in\rn$, we let $\rho(\beta)$ be the spectral radius of $\CL_{H,\beta}$. Since $\CL_{H,\beta}$ is a positive operator, so is $\CL_{H,\beta}^{*}$ and their spectral radius coincide. We also have the following formula
\[\rho(\beta)=\lim_{n\to\infty}\|\CL_{H,\beta}^n\|^{1/n}=\lim_{n\to\infty}\|\CL_{H,\beta}^n(1)\|^{1/n}.\]

\begin{theorem}\label{thm:RPF}\cite[Theorem 1.1]{FL}
	Suppose that $\log h_j$ is Dini-continuous for every $j=1,\ldots,d$, then for each $\beta\in\rn$ there is a unique positive function $k_{\beta}\in C(K)$ and a unique state $\tau_{\beta}\in C(K)^{*}$ such that
	$$\CL_{H,\beta}(k_{\beta})=\rho(\beta)k_{\beta},\qquad\CL_{H,\beta}^{*}(\tau_{\beta})=\rho(\beta)\tau_{\beta},\qquad\tau_{\beta}(k_{\beta})=1.$$
	Moreover, for every $a\in C(K)$, $\rho(\beta)^{-n}\CL_{H,\beta}^n(a)$ converges uniformly to $\tau_\beta(a)k_{\beta}$ and for every state $\theta\in C(K)^{*}$, $\rho(\beta)^{-n}(\CL_{H,\beta}^{*})^n(\theta)$ converges to $\theta(k_\beta)\tau_{\beta}$ in the weak*-topology.
\end{theorem}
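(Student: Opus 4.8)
Since Theorem~\ref{thm:RPF} is a Ruelle--Perron--Frobenius statement for the transfer operator $\CL_{H,\beta}$, the plan is to follow the route of Fan and Lau \cite{FL}: produce the eigenmeasure by a soft fixed-point argument, build the eigenfunction from an equicontinuity estimate powered by Dini-continuity, and then extract uniqueness and the convergence statements from a contraction in the Hilbert projective metric. For the eigenmeasure, note that $\CL_{H,\beta}$ is a positive operator with $\CL_{H,\beta}(1)\geq d\min_j\min_K h_j^{-\beta}>0$. On the weak*-compact convex set $P(K)$ of Borel probability measures on $K$, the map $\mu\mapsto\CL_{H,\beta}^{*}(\mu)/\mu(\CL_{H,\beta}(1))$ is well defined (the denominator is bounded below) and weak*-continuous, so the Schauder--Tychonoff theorem gives a fixed point $\tau_\beta$; the number $\lambda_0:=\tau_\beta(\CL_{H,\beta}(1))>0$ then satisfies $\CL_{H,\beta}^{*}(\tau_\beta)=\lambda_0\tau_\beta$. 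Since $\CL_{H,\beta}^n(1)\geq 0$ and $\tau_\beta$ is a probability measure, $\|\CL_{H,\beta}^n(1)\|\geq\tau_\beta(\CL_{H,\beta}^n(1))=\lambda_0^n$, while $\lambda_0$, being an eigenvalue of $\CL_{H,\beta}^{*}$, is at most $\rho(\beta)$; the formula $\rho(\beta)=\lim_n\|\CL_{H,\beta}^n(1)\|^{1/n}$ recorded before the statement then forces $\lambda_0=\rho(\beta)$.

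For the eigenfunction, set $\CL_0:=\rho(\beta)^{-1}\CL_{H,\beta}$ and $g_n:=\CL_0^n(1)$, so $\tau_\beta(g_n)=1$ for all $n$. The crucial input is a bounded-distortion estimate: because the maps $\gamma_i$ are contractions with ratio at most some $c\in(0,1)$ and each $\log h_j$ is Dini-continuous, iterating $\CL_{H,\beta}$ yields, for $y,y'\in K$,
\[
\frac{g_n(y)}{g_n(y')}\leq\exp\!\Big(\beta\sum_{m\geq 0}\max_j\omega\big(\log h_j,\,c^{m}\,d(y,y')\big)\Big)=:e^{\Omega(d(y,y'))},
\]
where the series converges by the Dini condition and $\Omega(t)\to 0$ as $t\to 0$. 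Combined with $\tau_\beta(g_n)=1$, this bounds the $g_n$ uniformly from above and away from $0$ and makes $\{g_n\}$ equicontinuous, so by the Arzel\`a--Ascoli theorem some subsequence converges uniformly to a function $k_\beta$ with $0<\inf k_\beta\leq\sup k_\beta<\infty$. Since $\CL_0 g_n=g_{n+1}$, applying the same compactness argument to the Ces\`aro averages $\tfrac1N\sum_{n=0}^{N-1}g_n$, which obey the identical bounds, produces a genuine fixed point $\CL_0(k_\beta)=k_\beta$, i.e. $\CL_{H,\beta}(k_\beta)=\rho(\beta)k_\beta$; rescaling arranges $\tau_\beta(k_\beta)=1$.

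The remaining, and hardest, step is the spectral-gap conclusion. Following Birkhoff's theory one works with the cone $\mathcal{C}$ of strictly positive $a\in C(K)$ whose logarithmic oscillation is dominated by a fixed multiple of the modulus $\Omega$ above, endowed with the Hilbert projective metric, and one shows that $\CL_0$ maps $\mathcal{C}$ into a sub-cone on which the distortion constant is strictly smaller; this is exactly where Dini-integrability enters, since it is what makes $\sum_{m\geq 1}\max_j\omega(\log h_j,c^{m}t)\leq\lambda\,\Omega(t)$ with $\lambda<1$ for all small $t$. By Birkhoff's contraction principle $\CL_0$ is then a strict contraction on $\mathcal{C}$, which forces uniqueness of the normalized positive fixed point (hence of $k_\beta$) and exponential uniform convergence $\CL_0^n(a)\to\tau_\beta(a)k_\beta$, first for $a\in\mathcal{C}$ and then, after adding a large constant and using $\CL_0^n(1)\to k_\beta$, for arbitrary $a\in C(K)$. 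Dualizing gives $\rho(\beta)^{-n}(\CL_{H,\beta}^{*})^n(\theta)\to\theta(k_\beta)\tau_\beta$ in the weak*-topology for every state $\theta$, and applying this to a second eigenmeasure yields uniqueness of $\tau_\beta$. The main obstacle is precisely this cone-contraction step: because the potentials are only Dini-continuous rather than H\"older, the cone must be built from the abstract modulus $\Omega$, and checking that $\CL_0$ strictly improves the associated bound is the technical core of the argument.
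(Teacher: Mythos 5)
The paper does not prove this statement: it is imported verbatim from Fan and Lau \cite[Theorem 1.1]{FL}, and (as the final example of Section~\ref{sec:KMS} remarks) their proof goes through the code map $F:\Sigma_d\to K$ of Proposition~\ref{prop:CodeMap}, transferring the problem to the full shift where the classical Ruelle--Perron--Frobenius theory for potentials with summable variations applies. Your first two steps are the standard direct argument on $K$ and are fine in outline: Schauder--Tychonoff produces an eigenmeasure, and the Dini-powered bounded-distortion estimate plus Arzel\`a--Ascoli and Ces\`aro averaging produce the eigenfunction. One local slip: your derivation of $\lambda_0=\rho(\beta)$ combines two inequalities that both say $\lambda_0\leq\rho(\beta)$ (from $\|\CL_{H,\beta}^n(1)\|\geq\lambda_0^n$ you get $\rho(\beta)\geq\lambda_0$, and ``eigenvalue $\leq$ spectral radius'' gives the same), so nothing is ``forced''; the missing direction $\rho(\beta)\leq\lambda_0$ requires the distortion bound $\|\CL_{H,\beta}^n(1)\|\leq e^{\Omega(\operatorname{diam}K)}\inf\CL_{H,\beta}^n(1)\leq e^{\Omega(\operatorname{diam}K)}\lambda_0^n$, which you only establish in the following paragraph and which you implicitly assume when writing $\tau_\beta(g_n)=1$. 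That is fixable by reordering.

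The genuine gap is in the cone-contraction step, which you yourself flag as the technical core. The inequality you need, $\sum_{m\geq1}\max_j\omega(\log h_j,c^{m}t)\leq\lambda\,\Omega(t)$ with a uniform $\lambda<1$ --- equivalently $\Omega(ct)\leq\lambda\Omega(t)$ for $\Omega(t)=\sum_{m\geq0}\max_j\omega(\log h_j,c^{m}t)$ --- is \emph{not} a consequence of Dini-integrability and fails in general. For instance $\omega(t)=(\log(e/t))^{-2}$ is a Dini modulus, yet $\Omega(t)\sim C/\log(1/t)$ and $\Omega(ct)/\Omega(t)\to1$ as $t\to0$, so no $\lambda<1$ exists. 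Hence the Birkhoff cone built from $\Omega$ is not strictly contracted, and it cannot be: for merely Dini-continuous (summable-variation) potentials the Ruelle operator need not have a spectral gap, and the convergence $\rho(\beta)^{-n}\CL_{H,\beta}^n(a)\to\tau_\beta(a)k_\beta$ need not be exponential --- note that the theorem asserts only uniform convergence. Your proposal therefore claims a conclusion stronger than the theorem via a mechanism unavailable at this level of regularity, and the uniqueness and convergence assertions are left unproved. To close the argument one must replace the cone step by a Walters-type argument (uniqueness and uniform convergence extracted from the equicontinuity of $\{\CL_0^n(a)\}$ and strict positivity of the weights, with no rate) or, as Fan and Lau actually do, lift everything to $\Sigma_d$ via the code map and invoke the summable-variation Ruelle--Perron--Frobenius theorem there.
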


\begin{proposition}\label{prop:analytic}\cite[Proposition 1.4]{FL}
	Suppose that $\log h_j$ is Dini-continuous for every $j=1,\ldots,d$, then the real function $\beta\mapsto \log \rho(\beta)$ is analytic.
\end{proposition}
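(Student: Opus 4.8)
The plan is to recognize $\rho(\beta)$, for $\beta$ in a neighbourhood of an arbitrary fixed $\beta_0\in\rn$, as the eigenvalue attached to a rank-one spectral projection of $\CL_{H,\beta}$ that depends analytically on $\beta$, and then to invoke analytic perturbation theory. Three ingredients are needed: analyticity of the family $\beta\mapsto\CL_{H,\beta}$ as an operator-valued map; the fact that $\rho(\beta_0)$ is an isolated, algebraically simple eigenvalue of $\CL_{H,\beta_0}$ (a spectral gap); and a device to check that the analytic branch produced by perturbation theory is the spectral radius rather than some other eigenvalue.

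For the first ingredient I would write $\CL_{H,\beta}=\sum_{j=1}^{d}M_{(h_j\circ\gamma_j)^{-\beta}}\,C_{\gamma_j}$, where $M_g$ denotes multiplication by $g\in C(K)$ and $C_{\gamma_j}$ composition with $\gamma_j$, both bounded operators on $C(K)$ with $C_{\gamma_j}$ independent of $\beta$. Since $h_j\circ\gamma_j>1$ is a bounded continuous function, $\beta\mapsto(h_j\circ\gamma_j)^{-\beta}=\exp(-\beta\log(h_j\circ\gamma_j))$ is an entire $C(K)$-valued map (the exponential series converges in sup norm, locally uniformly in $\beta\in\cn$), and therefore $\beta\mapsto\CL_{H,\beta}$ is an entire map into the Banach algebra $\CL(C(K))$; Dini-continuity is not used here, only through Theorem~\ref{thm:RPF}. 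For the second ingredient I would appeal to the Ruelle--Perron--Frobenius machinery of \cite{FL}: by Theorem~\ref{thm:RPF}, $P_{\beta_0}(a):=\tau_{\beta_0}(a)k_{\beta_0}$ is a rank-one idempotent with $\CL_{H,\beta_0}P_{\beta_0}=P_{\beta_0}\CL_{H,\beta_0}=\rho(\beta_0)P_{\beta_0}$, and (the exponential form of) the convergence $\rho(\beta_0)^{-n}\CL_{H,\beta_0}^{n}\to P_{\beta_0}$ yields $\operatorname{spr}\!\big(\CL_{H,\beta_0}-\rho(\beta_0)P_{\beta_0}\big)<\rho(\beta_0)$; hence $\rho(\beta_0)$ is an isolated point of the spectrum of $\CL_{H,\beta_0}$ whose Riesz projection is the rank-one $P_{\beta_0}$, i.e.\ it is an algebraically simple eigenvalue and a simple pole of the resolvent. (That this works is exactly where positivity and contractivity of $\Gamma$ enter: using that the $m$-th orbits $O_m(y)$ are uniformly $\varepsilon_m$-dense in $K$ with $\varepsilon_m\to 0$, one sees directly that $\CL_{H,\beta_0}$ is irreducible and that its leading eigenvalue is simple.)

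With these in hand, analytic perturbation theory (Kato) applies: for $\beta$ in a complex neighbourhood of $\beta_0$ the Riesz projection $P_\beta=\frac{1}{2\pi i}\oint_{|\zeta-\rho(\beta_0)|=r}(\zeta-\CL_{H,\beta})^{-1}\,d\zeta$ is well defined, analytic in $\beta$, and of rank one, and the attached eigenvalue $\lambda(\beta)=\operatorname{tr}(\CL_{H,\beta}P_\beta)$ is analytic near $\beta_0$ with $\lambda(\beta_0)=\rho(\beta_0)$; alternatively one can avoid the resolvent and solve $\CL_{H,\beta}(k)=\lambda k$, $\tau_{\beta_0}(k)=1$ near $(\rho(\beta_0),k_{\beta_0})$ by the analytic implicit function theorem, the linearization $(\mu,v)\mapsto\big(\CL_{H,\beta_0}v-\rho(\beta_0)v-\mu k_{\beta_0},\ \tau_{\beta_0}(v)\big)$ being an isomorphism precisely because $\rho(\beta_0)$ is simple. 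To identify $\lambda(\beta)$ with $\rho(\beta)$, observe that for real $\beta$ near $\beta_0$ the operator $\CL_{H,\beta}$ is still positive, while $\tilde k_\beta:=P_\beta k_{\beta_0}$ is a continuous-in-$\beta$ eigenfunction for $\lambda(\beta)$ with $\tilde k_{\beta_0}=k_{\beta_0}>0$; since $\{f\in C(K):\min_K f>0\}$ is open, $\tilde k_\beta$ is strictly positive for $\beta$ close to $\beta_0$, so by the uniqueness clause of Theorem~\ref{thm:RPF} we get $\lambda(\beta)=\rho(\beta)$ there. Finally $\rho(\beta)=\CL_{H,\beta}(k_\beta)(y)/k_\beta(y)>0$ for any $y\in K$, so $\log\rho$ is defined and analytic near $\beta_0$; as $\beta_0\in\rn$ was arbitrary, $\beta\mapsto\log\rho(\beta)$ is analytic on $\rn$.

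The main obstacle is the spectral-gap step, i.e.\ the strict inequality $\operatorname{spr}\!\big(\CL_{H,\beta_0}-\rho(\beta_0)P_{\beta_0}\big)<\rho(\beta_0)$. The statement quoted in Theorem~\ref{thm:RPF} only gives strong convergence $\rho(\beta_0)^{-n}\CL_{H,\beta_0}^{n}(a)\to\tau_{\beta_0}(a)k_{\beta_0}$, which by itself does not force the spectral radius of the complementary part below $\rho(\beta_0)$; upgrading to the exponential rate that delivers the gap is where Dini-continuity is genuinely needed, via bounded-distortion estimates for the weights and the contraction of a suitable cone in the Hilbert projective metric (the heart of \cite{FL}). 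Once the gap — equivalently, that $\rho(\beta_0)$ is a simple pole of the resolvent — is granted, everything else (analyticity of $\beta\mapsto\CL_{H,\beta}$, the perturbation bookkeeping, and matching the branch with the spectral radius) is routine.
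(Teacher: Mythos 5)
First, a point of comparison: the paper does not prove this proposition at all --- it is imported verbatim from Fan--Lau as \cite[Proposition 1.4]{FL} --- so there is no internal argument to measure yours against. Your strategy (entire dependence of $\beta\mapsto\CL_{H,\beta}$, isolation and algebraic simplicity of the leading eigenvalue, Kato perturbation of the Riesz projection, and identification of the analytic branch with $\rho(\beta)$ via positivity of the perturbed eigenfunction together with an application of $\tau_\beta$ to the eigenvalue equation) is the standard and, in outline, correct route, and the last step in particular is handled cleanly.

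The genuine gap is the one you flag, but it is more serious than an unproved estimate: the spectral gap you need is in general \emph{false} on $C(K)$, which is the space on which you set up the entire argument ($\beta\mapsto\CL_{H,\beta}$ entire into $\CL(C(K))$, Riesz projection over $|\zeta-\rho(\beta_0)|=r$, etc.). The convergence $\rho(\beta_0)^{-n}\CL_{H,\beta_0}^n(a)\to\tau_{\beta_0}(a)k_{\beta_0}$ in Theorem~\ref{thm:RPF} is for each fixed $a$, at a rate governed by the modulus of continuity of $a$; it is not uniform over the unit ball of $C(K)$, and transfer-type operators are typically not quasi-compact on $C(K)$ (for the model case of the full shift the spectrum of the normalized operator on continuous functions is a full disk). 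So $\rho(\beta_0)$ need not be an isolated spectral point on $C(K)$ and the contour integral defining $P_\beta$ need not make sense there; your parenthetical irreducibility argument addresses simplicity, not isolation, and cannot substitute for this. To close the proof one must rerun the perturbation argument on a smaller Banach space, e.g.\ the Dini-continuous functions with norm $\|a\|_\infty+\int_0^1\omega(a,t)\,t^{-1}dt$, where Dini-continuity of $\log h_j$ and contractivity of the $\gamma_j$ yield a Lasota--Yorke type inequality and hence an essential spectral radius strictly below $\rho(\beta_0)$; one must then also check that $\beta\mapsto(h_j\circ\gamma_j)^{-\beta}$ is analytic into that space (true, but this is where Dini-continuity enters, not mere boundedness as in your first paragraph) and that the spectral radius on the small space coincides with $\rho(\beta)$ on $C(K)$ (both equal $\lim_n\|\CL_{H,\beta}^n(1)\|_\infty^{1/n}$ by positivity). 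With the space corrected, the rest of your argument goes through.
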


\begin{corollary}\label{cor:sp.rad}
	Suppose that $\log h_j$ is Dini-continuous for every $j=1,\ldots,d$, then the real function $\rho$ that maps $\beta$ to $\rho(\beta)$ is strictly decreasing. Moreover, $\rho(\beta_c)=1$ for a unique $\beta_c>0$
\end{corollary}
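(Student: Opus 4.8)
The plan is to deduce everything from the Ruelle--Perron--Frobenius data of Theorem~\ref{thm:RPF} together with the analyticity of $\beta\mapsto\log\rho(\beta)$ from Proposition~\ref{prop:analytic}. First I would show strict monotonicity. Since $\beta\mapsto\log\rho(\beta)$ is analytic, hence $C^1$, it suffices to show $\frac{d}{d\beta}\log\rho(\beta)<0$ for every $\beta$; equivalently $\rho'(\beta)<0$. The standard device is to differentiate the eigenvalue equation $\CL_{H,\beta}(k_\beta)=\rho(\beta)k_\beta$ and pair with $\tau_\beta$. Writing $\CL_{H,\beta}'$ for the $\beta$-derivative of the operator, which acts by $\CL_{H,\beta}'(a)(y)=-\sum_j \log(h_j(\gamma_j(y)))\,h_j^{-\beta}(\gamma_j(y))a(\gamma_j(y))$, one applies $\tau_\beta$ to $\CL_{H,\beta}'(k_\beta)+\CL_{H,\beta}(k_\beta')=\rho'(\beta)k_\beta+\rho(\beta)k_\beta'$ and uses $\CL_{H,\beta}^*(\tau_\beta)=\rho(\beta)\tau_\beta$ and $\tau_\beta(k_\beta)=1$ to cancel the $k_\beta'$ terms, obtaining $\rho'(\beta)=\tau_\beta\bigl(\CL_{H,\beta}'(k_\beta)\bigr)$. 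This equals $-\int_K\sum_j \log(h_j(\gamma_j(y)))\,h_j^{-\beta}(\gamma_j(y))k_\beta(\gamma_j(y))\,d\tau_\beta(y)$, which is strictly negative because every $h_j>1$ (so $\log h_j>0$), every $h_j^{-\beta}>0$, $k_\beta>0$, and $\tau_\beta$ is a state; one should note $\tau_\beta$ is not the zero functional, so the integral of a strictly positive continuous function against it is strictly positive. Hence $\rho$ is strictly decreasing.

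Next I would pin down the limiting behavior to get existence of $\beta_c$. As $\beta\to\infty$, since $h_j>1$ and $K$ is compact there is $c>1$ with $h_j\ge c$ on $K$ for all $j$, so $\CL_{H,\beta}(1)(y)=\sum_j h_j^{-\beta}(\gamma_j(y))\le d\,c^{-\beta}\to 0$; using $\rho(\beta)=\lim_n\|\CL_{H,\beta}^n(1)\|^{1/n}\le\|\CL_{H,\beta}(1)\|$ (submultiplicativity of the spectral radius bound applied to the positive operator $\CL_{H,\beta}$, or simply $\rho(\beta)\le\|\CL_{H,\beta}\|\le d\,c^{-\beta}$) we get $\rho(\beta)\to 0$, so in particular $\rho(\beta)<1$ for $\beta$ large. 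As $\beta\to 0^+$, $\CL_{H,0}(1)(y)=d\ge 2$, and more to the point $\CL_{H,0}^n(1)\equiv d^n$, so $\rho(0)=d\ge 2>1$; by continuity of $\rho$ (it is even analytic, being the exponential of an analytic function) and strict monotonicity, there is exactly one $\beta_c\in(0,\infty)$ with $\rho(\beta_c)=1$, and $\beta_c>0$ since $\rho(0)>1$.

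The main obstacle is the differentiation-of-the-eigenvalue argument: one needs to know that $\beta\mapsto k_\beta$ (and $\beta\mapsto\tau_\beta$) are differentiable, or at least that the eigenpair can be differentiated in a controlled way, before the pairing trick is legitimate. This is exactly the content that underlies Proposition~\ref{prop:analytic} of \cite{FL}; rather than reprove it, I would cite that $\beta\mapsto\CL_{H,\beta}$ is a (real-)analytic family of bounded operators — each coefficient $h_j^{-\beta}$ depends analytically on $\beta$ uniformly on the compact $K$ — and that $\rho(\beta)$ is a simple isolated eigenvalue by Theorem~\ref{thm:RPF}, so analytic perturbation theory (Kato) gives analytic selections $\beta\mapsto k_\beta$, $\beta\mapsto\tau_\beta$; the derivative formula $\rho'(\beta)=\tau_\beta(\CL_{H,\beta}'(k_\beta))$ then follows as above. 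Alternatively, and perhaps more cleanly for this paper, one can avoid differentiation entirely: for $\beta_1<\beta_2$ one has the strict pointwise inequality $\CL_{H,\beta_2}(a)<\CL_{H,\beta_1}(a)$ for $0\le a$, $a\ne 0$ (since $h_j^{-\beta_2}<h_j^{-\beta_1}$), and applying this to $a=k_{\beta_1}$ gives $\CL_{H,\beta_2}(k_{\beta_1})<\CL_{H,\beta_1}(k_{\beta_1})=\rho(\beta_1)k_{\beta_1}$; pairing with $\tau_{\beta_2}$ and using $\CL_{H,\beta_2}^*(\tau_{\beta_2})=\rho(\beta_2)\tau_{\beta_2}$ yields $\rho(\beta_2)\,\tau_{\beta_2}(k_{\beta_1})<\rho(\beta_1)\,\tau_{\beta_2}(k_{\beta_1})$, and since $\tau_{\beta_2}(k_{\beta_1})>0$ we conclude $\rho(\beta_2)<\rho(\beta_1)$. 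I would present this second route as the main argument and remark that it needs only the eigendata of Theorem~\ref{thm:RPF}, not analyticity.
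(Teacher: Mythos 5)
Your main argument (the second route) is correct, and it reaches strict monotonicity by a genuinely different path than the paper. The paper first establishes only \emph{weak} monotonicity from the elementary norm comparison $\|\CL_{H,\beta_1}^n(1)\|\geq\|\CL_{H,\beta_2}^n(1)\|$, observes that $\rho$ is non-constant since $\rho(0)=d\geq 2$ while $\rho(\beta)\to 0$, and then upgrades weak to strict monotonicity by invoking the analyticity of $\beta\mapsto\log\rho(\beta)$ (Proposition~\ref{prop:analytic}): an analytic weakly decreasing function that is constant on no interval must be strictly decreasing. You instead get strictness directly from the eigendata of Theorem~\ref{thm:RPF}, pairing $\CL_{H,\beta_2}(k_{\beta_1})<\rho(\beta_1)k_{\beta_1}$ with $\tau_{\beta_2}$; this uses analyticity only to supply continuity of $\rho$ for the intermediate value argument, and it localizes the comparison at a single pair $\beta_1<\beta_2$ rather than passing through a global identity-theorem argument. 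Two small caveats: your blanket claim that $\CL_{H,\beta_2}(a)<\CL_{H,\beta_1}(a)$ pointwise for all $a\geq 0$, $a\neq 0$ is not quite right (at a point $y$ where $a(\gamma_j(y))=0$ for all $j$ both sides vanish), but it is harmless since you only apply it to the strictly positive $k_{\beta_1}$; and your first route via $\rho'(\beta)=\tau_\beta(\CL_{H,\beta}'(k_\beta))$ does carry the differentiability-of-eigendata gap you identify, so it is right to demote it to a remark and lead with the comparison argument. The existence and uniqueness of $\beta_c$ is handled essentially as in the paper.
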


\begin{proof}
	For $\beta_1,\beta_2\in\rn$ such that $\beta_1<\beta_2$ and for each $n\in\nn$, we have that
	\[\|\CL_{H,\beta_1}^n\|=\|\CL_{H,\beta_1}^n(1)\|\geq\|\CL_{H,\beta_2}^n(1)\|=\|\CL_{H,\beta_2}^n\|.\]
	Hence $\rho(\beta_1)\geq\rho(\beta_2)$. Also $\lim_{\beta\to\infty} \rho(\beta)=0$, so that $\rho$ is not constant. That $\rho$ is strictly decreasing then follows from Proposition \ref{prop:analytic}.
	
	Notice that $\rho(0)=d$. Since we are assuming $d\geq 2$, if follows from the first part that $\rho(\beta)=1$ for some $\beta>0$, which unique because $\rho$ is strictly decreasing.
\end{proof}

\begin{definition}
	Suppose that $\log h_j$ is Dini-continuous for every $j=1,\ldots,d$. We call the unique $\beta_c$ such that $\rho(\beta_c)=1$, given by Corollary \ref{cor:sp.rad}, the \emph{critical inverse temperature} for $H$.
\end{definition}

Now, let us go back to the study of KMS states of the generalized gauge action $\sigma$ on $\CO_\Gamma$ and $\CT_\Gamma$. We denote by $K_{\beta}(\CT_\Gamma)$ the set of $(\sigma,\beta)$-KMS states on $\CT_\Gamma$,  $K_{\beta}(\CT_\Gamma)_f$ the subset of KMS states of finite type and $K_{\beta}(\CT_\Gamma)_i$ the subset of KMS states of infinite type. Analogously we define $K_{\beta}(\CO_\Gamma)$, $K_{\beta}(\CO_\Gamma)_f$ and $K_{\beta}(\CO_\Gamma)_i$. Due to Lemma~\ref{lem:positive.energy}, we can use Theorem~\ref{thm:LN} to describe these sets.

From now on, we will assume that $\log h_j$ is Dini-continuous for every $j=1,\ldots,d$. We will divide our analysis for $\beta\in (0,\beta_c)$, $\beta=\beta_c$ and $\beta\in(\beta_c,\infty)$. For $\beta>0$, the three conditions are equivalent respectively to $\rho(\beta)>1$, $\rho(\beta)=1$ and $\rho(\beta)<1$, which we will use interchangeably. We also consider the case $\beta=\infty$, for which the analysis is similar to that of $\beta\in(\beta_c,\infty)$, even though we do not have a Ruelle operator for $\beta=\infty$.

\begin{lemma}\label{lem:betagrande}
	Let $\beta_c\in(\beta,\infty]$. Then
	\begin{enumerate}[(i)]
		\item\label{i:noinf} there are no states on $A$ that are of infinite type with respect to $(H,\beta)$;
		\item\label{i:allfin} for every state $\tau_0$ on $C(K)$, we have that $\sum_{n=0}^{\infty}\mathcal{F}_{H,\beta}^n(\tau_0)$ converges in the weak*-topology to an element of $C(K)^*$. Moreover, the map that sends $\tau_0$ to $\sum_{n=0}^{\infty}\mathcal{F}_{H,\beta}^n(\tau_0)$ preserves convex combinations.
	\end{enumerate}
\end{lemma}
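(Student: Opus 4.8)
The plan is to exploit the spectral gap from the Ruelle--Perron--Frobenius theorem (Theorem~\ref{thm:RPF}) together with the comparison between $\CF_{H,\beta}$ and $\CL_{H,\beta}^*$ from Lemma~\ref{lem:compare F and L}. The key quantitative input is that, since $\beta\in(\beta_c,\infty]$, we have $\rho(\beta)<1$ (for finite $\beta$; for $\beta=\infty$ the operators $\CF_{H,\beta}$ vanish and both statements are trivial, so I will dispose of that case first and then assume $\beta<\infty$).

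For \eqref{i:noinf}, suppose $\tau$ is a state of infinite type, so $\CF_{H,\beta}(\tau)=\tau$, hence $\CF_{H,\beta}^n(\tau)=\tau$ for all $n$. Applying Lemma~\ref{lem:compare F and L}\eqref{lem:F < L for positive} to the constant function $1\in C(K)^+$ gives $1=\tau(1)=\CF_{H,\beta}^n(\tau)(1)\le(\CL_{H,\beta}^*)^n(\tau)(1)=\tau(\CL_{H,\beta}^n(1))$ for every $n$. But by Theorem~\ref{thm:RPF}, $\rho(\beta)^{-n}\CL_{H,\beta}^n(1)\to\tau_\beta(1)k_\beta$ uniformly, so $\|\CL_{H,\beta}^n(1)\|\le C\rho(\beta)^n$ for some constant $C$ and all large $n$; since $\|\tau\|=1$ this forces $1\le\tau(\CL_{H,\beta}^n(1))\le C\rho(\beta)^n\to 0$, a contradiction. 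Hence no state of infinite type exists.

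For \eqref{i:allfin}, let $\tau_0$ be a state on $C(K)$. By Lemma~\ref{lem:compare F and L}\eqref{lem:F < L for positive}, for $a\in C(K)^+$ and $n\ge 1$ we have $0\le\CF_{H,\beta}^n(\tau_0)(a)\le(\CL_{H,\beta}^*)^n(\tau_0)(a)=\tau_0(\CL_{H,\beta}^n(a))\le C'\rho(\beta)^n\|a\|$, again using the uniform bound $\|\CL_{H,\beta}^n(a)\|\le C'\rho(\beta)^n\|a\|$ from Theorem~\ref{thm:RPF} (valid for $n$ large, and the finitely many small $n$ are absorbed into $C'$). Since $\rho(\beta)<1$, the series $\sum_{n=0}^\infty\CF_{H,\beta}^n(\tau_0)(a)$ is dominated by a convergent geometric series for each $a\in C(K)^+$, hence converges; by linearity and the decomposition of an arbitrary $a\in C(K)$ into real and imaginary parts and then into positive and negative parts, $\sum_{n=0}^\infty\CF_{H,\beta}^n(\tau_0)(a)$ converges for every $a\in C(K)$, and the partial sums are uniformly bounded in norm by $C'/(1-\rho(\beta))$, so the limit defines a bounded linear functional, i.e.\ an element of $C(K)^*$. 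That the map $\tau_0\mapsto\sum_{n=0}^\infty\CF_{H,\beta}^n(\tau_0)$ preserves convex combinations is immediate from the linearity of each $\CF_{H,\beta}^n$ (each being a power of the linear map $\CF_{H,\beta}$) together with the continuity of the limit: if $\tau_0=\lambda\tau'+(1-\lambda)\tau''$, then termwise $\CF_{H,\beta}^n(\tau_0)=\lambda\CF_{H,\beta}^n(\tau')+(1-\lambda)\CF_{H,\beta}^n(\tau'')$, and summing preserves this.

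The main obstacle is purely bookkeeping: one must be careful that the uniform decay estimate $\|\CL_{H,\beta}^n(a)\|\le C\rho(\beta)^n\|a\|$ from Theorem~\ref{thm:RPF} holds with a single constant uniform in $a$ (it does, since $\rho(\beta)^{-n}\CL_{H,\beta}^n$ converges in operator norm to the rank-one projection $a\mapsto\tau_\beta(a)k_\beta$, so the operator norms $\rho(\beta)^{-n}\|\CL_{H,\beta}^n\|$ are bounded), and that passing from $C(K)^+$ to all of $C(K)$ does not spoil convexity-preservation. No genuinely hard step arises once the spectral radius is known to be strictly less than $1$.
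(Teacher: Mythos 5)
Your proof is correct and follows essentially the same route as the paper: dominate $\CF_{H,\beta}^n$ by $(\CL_{H,\beta}^*)^n$ via Lemma~\ref{lem:compare F and L}\eqref{lem:F < L for positive} and then use that $\rho(\beta)<1$ forces the terms to decay. The only cosmetic difference is that the paper gets the decay from the spectral-radius formula $\rho(\beta)=\lim_n\|\CL_{H,\beta}^n\|^{1/n}$ together with the root test, whereas you extract a geometric bound $\|\CL_{H,\beta}^n\|\leq C\rho(\beta)^n$ from the Ruelle--Perron--Frobenius convergence (note the cited theorem gives strong, not operator-norm, convergence, but applying it to the constant function $1$ and using positivity of $\CL_{H,\beta}$ suffices for your bound).
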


\begin{proof}
	\eqref{i:noinf} In general, there are not states of infinite type for $\beta=\infty$. Let $\beta\in(\beta_c,\infty)$ and suppose that $\tau$ is a state of infinite type with respect to $(H,\beta)$, then, by Lemma \ref{lem:compare F and L}\eqref{lem:F < L for positive},
	$$1=\tau(1)=\CF_{H,\beta}^n(\tau)(1)\leq(\CL_{H,\beta}^{*})^n(\tau)(1)=|(\CL_{H,\beta}^{*})^n(\tau)(1)|\leq\|(\CL_{H,\beta}^{*})^n\|\|\tau\|\|1\|=\|(\CL_{H,\beta}^{*})^n\|.$$
	Hence
	$$1\leq\lim_{n\to\infty}\|(\CL_{H,\beta}^{*})^n\|^{1/n}=\rho(\beta)<1$$
	which is a contradiction.
	
	\eqref{i:allfin} The result is trivial for $\beta=\infty$, so let $\beta\in(\beta_c,\infty)$. Take a non-zero element $a\in C(K)^{+}$, then, by the Lemma \ref{lem:compare F and L}\eqref{lem:F < L for positive},
	$$\sum_{n=0}^{\infty}\mathcal{F}_{H,\beta}^n(\tau_0)(a)\leq\sum_{n=0}^{\infty}(\CL_{H,\beta}^{*})^n(\tau_0)(a).$$
	Now
	$$\lim_{n\to\infty}(|(\CL_{H,\beta}^{*})^n(\tau_0)(a)|)^{1/n}\leq\lim_{n\to\infty} \|(\CL_{H,\beta}^{*})^n\|^{1/n} \|a\|^{1/n}=\rho(\beta)<1$$
	and by the root test $\sum_{n=0}^{\infty}\mathcal{F}_{H,\beta}^n(\tau_0)(a)$ converges absolutely. The fact that this convergence is absolute then implies that $\sum_{n=0}^{\infty}\mathcal{F}_{H,\beta}^n(\tau_0)$ converges in $C(K)^*$ with the weak*-topology and that the map that sends $\tau_0$ to $\sum_{n=0}^{\infty}\mathcal{F}_{H,\beta}^n(\tau_0)$ preserves convex combinations.
\end{proof}

\begin{proposition}\label{p:KMS.beta.grande}
	Let $\beta_c\in(\beta,\infty]$.
	\begin{enumerate}[(i)]
		\item\label{i:T beta grande} There is a bijection preserving extreme points between $K_\beta(\CT_\Gamma)$ and the set $S(C(K))$ of states on $C(K)$ that sends $\varphi\in K_\beta(\CT_\Gamma)$ to $(\tau(1)-\mathcal{F}_{H,\beta}(\tau)(1))^{-1}(\tau-\mathcal{F}_{H,\beta}(\tau))$, where $\tau=\varphi|_{C(K)}$. In particular, the extreme points of $K_\beta(\CO_\Gamma)$ are parametrized by $K$.
		\item\label{i:O beta grande} The above map restricts to a bijection between $K_\beta(\CO_\Gamma)$ and the set $S(C(K)/J_X)$ of states on $C(K)$ vanishing $J_X$. In particular, the extreme points of $K_\beta(\CO_\Gamma)$ are parametrized by $B\IFS$.
	\end{enumerate}
\end{proposition}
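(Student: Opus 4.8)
The plan is to combine Theorem~\ref{thm:LN} with Lemma~\ref{lem:betagrande}: since there are no states of infinite type, the Laca--Neshveyev decomposition forces every $(\sigma,\beta)$-KMS state to be of finite type, and a finite-type state is recovered from, and in turn recovers, a single positive functional. By Theorem~\ref{thm:LN}, $\varphi\mapsto\tau:=\varphi|_{C(K)}$ is a bijection of $K_\beta(\CT_\Gamma)$ onto the set of states on $C(K)$ satisfying (K2), i.e.\ $\tau-\CF_{H,\beta}(\tau)\ge 0$, and by Lemma~\ref{lem:betagrande}\eqref{i:noinf} every such $\tau$ is of finite type with respect to $(H,\beta)$, say $\tau=\sum_{n=0}^{\infty}\CF_{H,\beta}^{n}(\tau_0)$ with $\tau_0\in C(K)^*_+$. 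The first point to settle is the identity $\tau_0=\tau-\CF_{H,\beta}(\tau)$. By Lemma~\ref{lem:betagrande}\eqref{i:allfin} the total masses $\CF_{H,\beta}^{n}(\tau_0)(1)$ are summable, so $B\mapsto\sum_n\CF_{H,\beta}^{n}(\tau_0)(B)$ is a finite regular Borel measure whose pairing with $C(K)$ is, by dominated convergence, the weak* limit $\tau$; then for $a\in C(K)_+$ one uses Lemma~\ref{lem:compute.tr} in the form $\CF_{H,\beta}(\nu)(a)=\int_K\CS_{H,\beta}(a)\,d\nu$, with $\CS_{H,\beta}(a)$ a nonnegative bounded Borel function, together with monotone convergence, to interchange $\CF_{H,\beta}$ with the sum and obtain $\CF_{H,\beta}(\tau)=\sum_{n\ge 1}\CF_{H,\beta}^{n}(\tau_0)=\tau-\tau_0$. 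In particular $\tau_0$ is uniquely determined by $\tau$; moreover $\tau$ is a state and, as there are no states of infinite type, $\tau\ne\CF_{H,\beta}(\tau)$, so $\tau_0=\tau-\CF_{H,\beta}(\tau)$ is a nonzero positive functional with $\tau(1)-\CF_{H,\beta}(\tau)(1)=\tau_0(1)=\|\tau_0\|>0$. Hence the formula in the statement makes sense and produces the state $\mu:=\|\tau_0\|^{-1}\tau_0$; write $\Phi(\varphi):=\mu$.

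I would then assemble the bijection via a ``base of a cone'' argument. By Lemma~\ref{lem:compute.tr}, $\CF_{H,\beta}$ is additive and positively homogeneous on $C(K)^*_+$, so the map $\Psi\colon\varphi\mapsto\tau-\CF_{H,\beta}(\tau)$ (with $\tau=\varphi|_{C(K)}$) is affine, and it is a bijection of $K_\beta(\CT_\Gamma)$ onto the base $B_1:=\{\tau_0\in C(K)^*_+:\sum_n\CF_{H,\beta}^{n}(\tau_0)(1)=1\}$ of the cone $C(K)^*_+$: injectivity follows from injectivity of $\varphi\mapsto\varphi|_{C(K)}$ together with $\tau=\sum_n\CF_{H,\beta}^{n}(\tau_0)$, and surjectivity because for $\tau_0\in B_1$ the functional $\tau:=\sum_n\CF_{H,\beta}^{n}(\tau_0)$ is a state with $\tau-\CF_{H,\beta}(\tau)=\tau_0\ge 0$, so it satisfies (K2) and hence comes from a KMS state by Theorem~\ref{thm:LN}. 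The remaining normalization $R\colon\tau_0\mapsto\tau_0(1)^{-1}\tau_0$ maps $B_1$ bijectively onto $S(C(K))=\{\nu\in C(K)^*_+:\nu(1)=1\}$, and $\Phi=R\circ\Psi$. Since both $\tau_0\mapsto\sum_n\CF_{H,\beta}^{n}(\tau_0)(1)$ and $\tau_0\mapsto\tau_0(1)$ are linear functionals on $C(K)^*_+$ that are strictly positive off the origin, the bases $B_1$ and $S(C(K))$ have the same extreme points up to the radial rescaling $R$, namely those lying on the extreme rays of $C(K)^*_+$; hence $\Phi$ is a bijection $K_\beta(\CT_\Gamma)\to S(C(K))$ preserving extreme points. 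As $C(K)$ is unital and commutative, the extreme points of $S(C(K))$ are the point evaluations $\mathrm{ev}_x$, $x\in K$, so, through $\Phi$, the extreme points of $K_\beta(\CT_\Gamma)$ are parametrized by $K$, as claimed.

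For part~(ii), Theorem~\ref{thm:LN} gives that $\varphi\in K_\beta(\CT_\Gamma)$ lies in $K_\beta(\CO_\Gamma)$ exactly when $\tau=\varphi|_{C(K)}$ additionally satisfies (K1), i.e.\ $(\tau-\CF_{H,\beta}(\tau))(a)=0$ for all $a\in J_X$; equivalently $\Psi(\varphi)$, hence also $\mu=\Phi(\varphi)$, vanishes on $J_X$. Thus $\Phi$ restricts to a bijection of $K_\beta(\CO_\Gamma)$ onto $\{\mu\in S(C(K)):\mu|_{J_X}=0\}=S(C(K)/J_X)$, and rerunning the previous argument with $C(K)^*_+$ replaced by the subcone $\{\nu\in C(K)^*_+:\nu|_{J_X}=0\}=(C(K)/J_X)^*_+$ shows this restriction still preserves extreme points. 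Finally, the finite branch condition makes $C\IFS$, and therefore $B\IFS\subseteq\{\gamma_i(y):y\in C\IFS,\ 1\le i\le d\}$, a finite set; so by Lemma~\ref{lemma:FinBranchIdeal} $J_X$ is the ideal of functions on $K$ vanishing on $B\IFS$ and $C(K)/J_X\cong C(B\IFS)$, whose pure states are the evaluations at the finitely many points of $B\IFS$. This yields the parametrization of the extreme points of $K_\beta(\CO_\Gamma)$ by $B\IFS$.

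The step I expect to be the main obstacle is the identity $\tau_0=\tau-\CF_{H,\beta}(\tau)$ for finite-type states: $\CF_{H,\beta}$ is built from $\CS_{H,\beta}$, which need not be continuous, so $\CF_{H,\beta}$ is not visibly weak*-continuous, and interchanging it with the defining series of a finite-type state needs the prior observation that this series converges not merely weak* but as a series of positive Borel measures, after which monotone convergence applies. A secondary subtlety is that $\Phi$ itself is not affine (only $\Psi$ is), which is why the preservation of extreme points is routed through the comparison of two bases of a cone rather than checked directly.
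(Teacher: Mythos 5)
Your proposal is correct and follows the same overall route as the paper: use Lemma~\ref{lem:betagrande}\eqref{i:noinf} to rule out infinite-type states, then transport the Laca--Neshveyev bijection of Theorem~\ref{thm:LN} through the finite-type decomposition $\tau=\sum_{n}\CF_{H,\beta}^{n}(\tau_0)$ and normalize $\tau_0=\tau-\CF_{H,\beta}(\tau)$. Two sub-steps are handled differently, both to your credit. First, the paper asserts $\tau-\CF_{H,\beta}(\tau)=(\omega(1))^{-1}\tau_0$ and the inverse correspondence with a ``straightforward computations'' remark; you correctly identify that this interchange of $\CF_{H,\beta}$ with the weak*-convergent series is not automatic (since $\CS_{H,\beta}(a)$ is only Borel, $\CF_{H,\beta}$ is not visibly weak*-continuous) and repair it by observing that the series converges as a series of positive Borel measures with summable masses, after which monotone convergence applies; this fills a genuine gap in the paper's write-up. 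Second, for preservation of extreme points the paper computes explicitly how a convex combination $\varphi^1=\lambda\varphi^2+(1-\lambda)\varphi^3$ transforms into a convex combination of the $\tau_0^i$ with reweighted coefficients $\lambda c_2/c_1$, $(1-\lambda)c_3/c_1$; your cone-base argument (both $S(C(K))$ and $B_1$ are bases of $C(K)^*_+$ cut out by strictly positive additive functionals, so their extreme points are the intersections with the extreme rays, and the radial rescaling matches them) is equivalent but cleaner and makes transparent why the non-affine normalization is harmless. Your derivation that $\tau(1)-\CF_{H,\beta}(\tau)(1)>0$ (from $\tau_0\neq 0$ and positivity) also sidesteps the paper's inequality $\CL_{H,\beta}^*(\tau)(1)\le\rho(\beta)$, which is not justified as stated since the norm of $\CL_{H,\beta}$ only dominates, rather than is dominated by, its spectral radius. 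Part~(ii) is handled identically in both arguments.
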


\begin{proof}
	By Lemma~\ref{lem:betagrande}\eqref{i:noinf}, we have that $K_\beta(\CT_\Gamma)_i=K_\beta(\CO_\Gamma)_i=\emptyset$, hence we only have to deal with KMS states of finite type.
	
	\eqref{i:T beta grande} Let $\varphi\in K_\beta(\CT_\Gamma)_f$ and $\tau=\varphi|_{C(K)}$. By Lemma \ref{lem:compare F and L}\eqref{lem:F < L for positive}, we have that \begin{equation}\label{eq:difpos}\CF_{H,\beta}(\tau)(1)\leq\CL_{H,\beta}^*(\tau)(1)\leq\rho(\beta)<1=\tau(1),\end{equation} so that $\tau(1)-\CF_{H,\beta}(\tau)(1)>0$. Condition (K2) of Theorem~\ref{thm:LN} then implies that $\tau_0=(\tau(1)-\mathcal{F}_{H,\beta}(\tau)(1))^{-1}(\tau-\mathcal{F}_{H,\beta}(\tau))$ is state on $C(K)$.
	
	Let now $\tau_0\in S(C(K))$. By Lemma~\ref{lem:betagrande}\eqref{i:allfin}, $\omega=\sum_{n=0}^{\infty}\mathcal{F}_{H,\beta}^n(\tau_0)\in C(K)^*$ so that $\tau=(\omega(1))^{-1}\omega\in S(C(K))$. Notice that $\tau-\CF_{H,\beta}(\tau)=(\omega(1))^{-1}\tau_0$ so that $\tau$ satisfies condition (K2) of Theorem~\ref{thm:LN} and hence it extends to an element $\varphi\in K_\beta(\CT_\Gamma)_f$.
	
	Straightforward computations show that these constructions are one inverse of the other. Let us show that the above constructions preserve extreme points.
	
	Suppose that $\varphi^1=\lambda\varphi^2+(1-\lambda)\varphi^3$ for $\lambda\in (0,1)$ and $\varphi^1,\varphi^2,\varphi^3\in K_\beta(\CT_\Gamma)_f$. Let $\tau^1$, $\tau^2$ and $\tau^3$ be, respectively, the restriction of $\varphi^1$, $\varphi^2$ and $\varphi^3$ to $C(K)$. Also, for $i=1,2,3$, let $\tau_0^i$ be constructed from $\tau_i$ as above. Define the constants $c_1$, $c_2$ and $c_3$ by $c_i=\tau^i(1)-\CF_{H,\beta}(\tau^i)(1)$, where $i=1,2,3$. For each $i=1,2,3$, because $\varphi^i$ is of finite type, by Equation \eqref{eq:difpos}, we have that $c_i>0$. Then
	\begin{align*}\tau^1_0&=\frac{\tau^1-\CF_{H,\beta}(\tau^1)}{\tau^1(1)-\CF_{H,\beta}(\tau^1)(1)}\\
	&=\frac{\lambda(\tau^2-\CF_{H,\beta}(\tau^2))+(1-\lambda)(\tau^3-\mathcal{F}_{H,\beta}(\tau^3))}{c_1}\\
	&=\frac{\lambda c_2(\tau^2-\mathcal{F}_{H,\beta}(\tau^2))}{c_1c_2}+\frac{(1-\lambda)c_3(\tau^3-\mathcal{F}_{H,\beta}(\tau^3))}{c_1c_3}\\
	&=\frac{\lambda c_2}{c_1}\tau^2_0+\frac{(1-\lambda)c_3}{c_1}\tau^3_0.
	\end{align*}
	Notice that
	$$\frac{\lambda c_2}{c_1}+\frac{(1-\lambda)c_3}{c_1}=\frac{\lambda c_2+(1-\lambda)c_3}{c_1}=\frac{c_1}{c_1}=1$$
	so that $\tau_0^1$ is a convex combination of the elements $\tau_0^2$ and $\tau_0^3$ of $S(C(K))$. If $\tau_0^1$ is an extremal point of $\mathcal{T}(C(K)/J_X)$, then $\tau_0^1=\tau_0^2=\tau_0^3$ so that $\phi^1=\phi^2=\phi^3$ is an extremal point $K_\beta(\CT_\Gamma)_f$.
	
	Similarly, using Lemma~\ref{lem:betagrande}\eqref{i:allfin}, we see that if $\varphi$ is an extreme point of $K_\beta(\CT_\Gamma)_f$, then the corresponding $\tau_0$ is an extreme point of $S(C(K))$. For the last part of the statement, it is well known that the extreme points of $S(C(K))$ are the pure states which are given by the points of $K$.
	
	\eqref{i:O beta grande} In the above construction, it is clear that $\tau$ satisfies (K1) of Theorem~\ref{thm:LN} if and only if $\tau_0$ vanishes on $J_X$. By Lemma~\ref{lemma:FinBranchIdeal}, if that is the case $\tau_0$ has support on $B\IFS$. Since we are assuming that $B\IFS$ is finite, the extreme points of $S(C(K)/J_X)$ are exactly the delta Dirac measures $\delta_y$ for $y\in B\IFS$.
\end{proof}

For $\beta\in (0,\beta_c]$, we impose an extra condition as done in \cite[Section 6]{IKW}. We start with a few lemmas.

\begin{lemma}\label{lem:point.mass.orbit}
	Let $\tau$ be a state on $C(K)$ satisfying (K2) of Theorem~\ref{thm:LN}. If $\tau$ has a point mass at $x$ then it has point mass at $y$ for all $y\in O(x)$. 
\end{lemma}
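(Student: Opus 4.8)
The plan is to exploit condition (K2) applied to well-chosen positive functions that isolate a single preimage under the maps $\gamma_j$. Suppose $\tau$ has a point mass at $x$, say $\tau(\{x\})=m>0$, and let $y\in O(x)$. Since $O(x)=\bigcup_{n\geq 0}O_n(x)$, it suffices by induction to treat the case $y\in O_1(x)$, i.e.\ $y=\gamma_j(x)$ for some $j\in\{1,\ldots,d\}$; the general statement then follows by iterating. So fix $j$ with $y=\gamma_j(x)$ and set $e=e(y,x)=\#\{l\mid \gamma_l(x)=y\}\geq 1$.

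The key step is to feed into (K2) a positive function $a$ that is close to the indicator of $\{x\}$. More precisely, for a small neighbourhood $U$ of $x$ in $K$, choose $a\in C(K)_+$ with $0\leq a\leq 1$, $a(x)=1$, and $\supp a\subseteq U$. By Lemma~\ref{lem:compute.tr}, condition (K2) reads
\[
\int_K \CS_{H,\beta}(a)\,d\tau \;\leq\; \tau(a),
\]
where $\CS_{H,\beta}(a)(z)=\sum_{l=1}^d e(\gamma_l(z),z)^{-1}h_l^{-\beta}(\gamma_l(z))\,a(\gamma_l(z))$. The right-hand side can be made as close to $m$ as we like by shrinking $U$ (using outer regularity of $\tau$). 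For the left-hand side I want a lower bound that produces a point mass at $y$: evaluating the summand indexed by $j$ at $z=y$ gives $e(\gamma_j(y),y)^{-1}h_j^{-\beta}(\gamma_j(y))a(\gamma_j(y))$, but more to the point, for $z$ in a neighbourhood $V$ of $y$ the term $e^{-1}h_j^{-\beta}(x')\,a(x')$ with $x'=\gamma_j(z)$ is bounded below by roughly $e^{-1}h_j^{-\beta}(x)$ times $a$ evaluated near $x$, since $\gamma_j$ is continuous and $\gamma_j(y)$ need not equal $x$ — so this naive approach needs care about which preimage of points near $x$ we are tracking.

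The cleaner route, which I would actually carry out, is the dual one: instead of pushing a bump at $x$ forward, pull a bump at $y$ back. Choose $b\in C(K)_+$ supported in a small neighbourhood $V$ of $y$ with $b(y)=1$, $0\leq b\leq 1$. Then $\CS_{H,\beta}(b)$ is supported in $\gamma_j^{-1}(V)$ (plus the analogous sets for the other maps), and near $x$ we have $\CS_{H,\beta}(b)(z)\geq e(\gamma_j(z),z)^{-1}h_j^{-\beta}(\gamma_j(z))\,b(\gamma_j(z))$; as $z\to x$ this tends to a strictly positive constant $c:=e^{-1}h_j^{-\beta}(y)$ (using $h_j>1$, so $h_j^{-\beta}>0$, and continuity). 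Hence $\CS_{H,\beta}(b)\geq (c/2)\,\mathbf 1_{W}$ on a small neighbourhood $W$ of $x$ for $V$ chosen small enough, and (K2) gives
\[
\tfrac{c}{2}\,\tau(W)\;\leq\;\int_K \CS_{H,\beta}(b)\,d\tau\;\leq\;\tau(b)\;\leq\;\tau(\overline V).
\]
Since $\tau(W)\geq \tau(\{x\})=m>0$, we conclude $\tau(\overline V)\geq cm/2>0$ for every neighbourhood $V$ of $y$; letting $V$ shrink and using that $\tau$ is a regular Borel measure on the compact metric space $K$ yields $\tau(\{y\})\geq cm/2>0$. This gives the point mass at $y=\gamma_j(x)$, and induction on the orbit level completes the proof.

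The main obstacle is purely bookkeeping: ensuring that when $V$ is shrunk the set $\CS_{H,\beta}(b)^{-1}((0,\infty))$ genuinely contains a neighbourhood of $x$ with the uniform lower bound $c/2$, and that no cancellation occurs — here positivity of $b$ and of all the coefficients $e(\cdot,\cdot)^{-1}h_l^{-\beta}(\cdot)$ is exactly what saves us, so there is nothing subtle beyond continuity of the $\gamma_j$ and $h_j$ and regularity of $\tau$. Note this argument does not use (K1), so it applies to KMS states on both $\CT_\Gamma$ and $\CO_\Gamma$, and it is uniform in $\beta\in(0,\infty]$ — for $\beta=\infty$ the statement is vacuous since then $\CS_{H,\beta}=0$ and (K2) forces no point masses to propagate, but also no hypothesis "point mass at $x$" survives, consistent with the claim.
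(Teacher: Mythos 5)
Your argument is correct and is essentially the paper's own: both reduce to applying (K2), via Lemma~\ref{lem:compute.tr}, to a test function concentrated at $y=\gamma_j(x)$ and noting that $\CS_{H,\beta}$ of such a function is bounded below by a positive constant (of the order of $e(\gamma_j(x),x)^{-1}h_j^{-\beta}(\gamma_j(x))$) near $x$, then inducting along the orbit. The paper packages this more compactly as $\tau\geq\CF_{H,\beta}(\tau)\geq\tau(\{x\})\CF_{H,\beta}(\delta_x)$ together with the explicit computation $\CF_{H,\beta}(\delta_x)=\sum_{j}e(\gamma_j(x),x)^{-1}h_j^{-\beta}(\gamma_j(x))\delta_{\gamma_j(x)}$, which delivers the atom at $\gamma_j(x)$ without bump functions or regularity arguments, but your route (with any positive constant in place of your specific $c/2$, which is all that is needed) reaches the same conclusion.
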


\begin{proof}
	If $\tau$ satisfies (K2) of Theorem~\ref{thm:LN} then, $\tau\geq\CF_{H,\beta}(\tau)\geq\tau(\{x\})\CF_{H,\beta}(\delta_x)$, where $\delta_x$ is the Dirac delta at $x$. For $a\in C(K)$, we have
	$$\CF_{H,\beta}(\delta_x)(a)=\int_K \CS_{H,\beta}(a)d\delta_x=\sum_{j=1}^d \frac{h_j^{-\beta}(\gamma_j(x))}{e(\gamma_j(x),x)}a(\gamma_j(x))$$
	so that
	$$\CF_{H,\beta}(\delta_x)=\sum_{j=1}^d \frac{h_j^{-\beta}(\gamma_j(x))}{e(\gamma_j(x),x)}\delta_{\gamma_j(x)}.$$
	
	It follows that
	$$\tau(\{\gamma_j(x)\})\geq\tau(\{x\})\CF_{H,\beta}(\delta_x)(\{\gamma_j(x)\})=\tau(\{x\})\frac{h_j^{-\beta}(\gamma_j(x))}{e(\gamma_j(x),x)}$$
	so that if $\tau(\{x\})>0$ then $\tau(\{\gamma_j(x)\})>0$.
\end{proof}

\begin{lemma}
	Let $x\in K$. If $O(x)\cap C(\gamma_1,\ldots,\gamma_n)=\emptyset$ then
	\begin{equation}\label{lem:F=L delta}
		\CF_{H,\beta}^n(\delta_x)=(\CL_{H,\beta}^{*})^n(\delta_x)
	\end{equation}
	for all $n\in\mathbb{N}$.
\end{lemma}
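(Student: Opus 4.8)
The plan is to prove the identity $\CF_{H,\beta}^n(\delta_x)=(\CL_{H,\beta}^*)^n(\delta_x)$ by induction on $n$, using the hypothesis $O(x)\cap C\IFS=\emptyset$ to feed the orbit-preserving structure of the iteration through Lemma~\ref{lem:compare S and L}\eqref{lem:S=L in K-C}. First I would dispose of $n=0$ trivially. For the inductive step, the crucial observation is that when $O(x)\cap C\IFS=\emptyset$, the measure $\delta_x$ is supported at a point $x\in K\setminus C\IFS$, and more importantly every iterate of $\delta_x$ under $\CF_{H,\beta}$ (equivalently $\CL_{H,\beta}^*$) remains supported on $O(x)\subseteq K\setminus C\IFS$.

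In more detail, I would first record the explicit formula for $\CF_{H,\beta}(\delta_x)$ computed in the proof of Lemma~\ref{lem:point.mass.orbit}, namely
\[\CF_{H,\beta}(\delta_x)=\sum_{j=1}^d\frac{h_j^{-\beta}(\gamma_j(x))}{e(\gamma_j(x),x)}\delta_{\gamma_j(x)},\]
and the analogous formula for $\CL_{H,\beta}^*$, which is obtained directly from \eqref{eq:L}: for $a\in C(K)$ one has $(\CL_{H,\beta}^*(\delta_x))(a)=\CL_{H,\beta}(a)(x)=\sum_{j=1}^d h_j^{-\beta}(\gamma_j(x))a(\gamma_j(x))$, so that
\[\CL_{H,\beta}^*(\delta_x)=\sum_{j=1}^d h_j^{-\beta}(\gamma_j(x))\delta_{\gamma_j(x)}.\]
Since $x\notin C\IFS$ (because $x\in O(x)$ and $O(x)\cap C\IFS=\emptyset$), we have $e(\gamma_j(x),x)=1$ for all $j$, so these two measures agree; this is exactly Lemma~\ref{lem:compare S and L}\eqref{lem:S=L in K-C} read at the level of the adjoint evaluated at $\delta_x$. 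That settles $n=1$.

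For the inductive step, suppose $\CF_{H,\beta}^n(\delta_x)=(\CL_{H,\beta}^*)^n(\delta_x)$; call this common measure $\mu_n$. From the formulas above (applied repeatedly) $\mu_n$ is a positive linear combination of Dirac masses $\delta_z$ with $z\in O_n(x)\subseteq O(x)$, hence $\mu_n$ is supported on $O(x)\subseteq K\setminus C\IFS$, i.e. $\mu_n(C\IFS)=0$. By Lemma~\ref{lem:compare F and L}\eqref{lem:F=L if tau(C)=0}, applied with $\tau=\mu_n$, we get $\CF_{H,\beta}(\mu_n)=\CL_{H,\beta}^*(\mu_n)$, and therefore
\[\CF_{H,\beta}^{n+1}(\delta_x)=\CF_{H,\beta}(\mu_n)=\CL_{H,\beta}^*(\mu_n)=(\CL_{H,\beta}^*)^{n+1}(\delta_x),\]
completing the induction. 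The only mild subtlety—and the one place worth being careful—is the bookkeeping that $\mu_n$ really is supported on $O_n(x)$ and that $O_n(x)\subseteq O(x)$, so that the hypothesis $O(x)\cap C\IFS=\emptyset$ indeed forces $\mu_n(C\IFS)=0$; this follows at once from the definition of $O(x)=\bigcup_{n\ge0}O_n(x)$ together with the displayed formula expressing $\CF_{H,\beta}(\delta_z)$ as a combination of $\delta_{\gamma_j(z)}$. No genuine obstacle is expected here; the argument is essentially a linearity-plus-support reduction to Lemma~\ref{lem:compare F and L}\eqref{lem:F=L if tau(C)=0}.
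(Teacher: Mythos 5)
Your proof is correct and is essentially the paper's argument: the paper writes out the $n$-fold iterate in one shot as the explicit sum over words $i_1,\ldots,i_n$ of weighted Dirac masses at points of $O_n(x)$, using that $e(\gamma_i(y),y)=1$ for all $y\in O(x)$, which is exactly what your induction unrolls to. Packaging it as an induction with the support bookkeeping $\mu_n(C\IFS)=0$ and an appeal to Lemma~\ref{lem:compare F and L}\eqref{lem:F=L if tau(C)=0} is just a slightly more explicit organization of the same computation.
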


\begin{proof}
	If $O(x)\cap C(\gamma_1,\ldots,\gamma_n)=\emptyset$ then for all $y\in O(x)$ we have that $e(\gamma_i(y),y)=1$. Because of this, when we calculate both sides of \eqref{lem:F=L delta}, we obtain
	$$\sum_{i_1,\ldots,i_n=1}^d h_{i_1}^{-\beta}(\gamma_{i_1}(x))h_{i_2}^{-\beta}(\gamma_{i_1}\circ\gamma_{i_2}(x))\cdots h_{i_n}^{-\beta}(\gamma_{i_1}\circ\cdots\circ\gamma_{i_n}(x))\delta_{\gamma_{i_1}\circ\cdots\circ\gamma_{i_n}(x)},$$
	and hence $\CF_{H,\beta}^n(\delta_x)=(\CL_{H,\beta}^{*})^n(\delta_x)$.
\end{proof}

\begin{proposition}\label{p:KMS.beta.pequeno} Suppose that $\Gamma$ satisfies the escape condition and let $\beta>0$.
	\begin{enumerate}[(i)]
		\item\label{prop:beta < beta_c} If $\beta<\beta_c$ then $K_{\beta}(\CT_\Gamma)=K_{\beta}(\CO_\Gamma)=\emptyset$.
		\item\label{prop:beta = beta_c} If $\beta=\beta_c$ then there is a unique $(\sigma,\beta)$-KMS state both on $\CT_\Gamma$ and $\CO_\Gamma$, which is of infinite type and is given by the unique state $\tau\in C(K)^*$ such that $\CL_{H,\beta}^{*}(\tau)=\tau$.
	\end{enumerate}
\end{proposition}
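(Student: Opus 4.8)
The plan is to pass, via Theorem~\ref{thm:LN}, to the restriction $\tau=\varphi|_{C(K)}$ of a $(\sigma,\beta)$-KMS state $\varphi$: then $\tau$ is a state on $C(K)$ with $\CF_{H,\beta}(\tau)\le\tau$ (condition (K2)), and, since $\CF_{H,\beta}$ is positive, $\CF_{H,\beta}^n(\tau)\le\tau$ for all $n$. By Theorem~\ref{thm:LN}, KMS states on $\CT_\Gamma$ correspond to (K2)-states and those on $\CO_\Gamma$ to states satisfying (K1) \emph{and} (K2); so for (i) it is enough to rule out every (K2)-state, and for (ii) to exhibit exactly one and check that it also satisfies (K1). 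Set $G=\{z\in K\mid O(z)\cap C\IFS=\emptyset\}$; this set is forward invariant, and by the escape condition every $y\in K$ admits a finite word $u(y)$, of minimal length $m(y)$, with $\gamma_{u(y)}(y)\in G$ — chosen lexicographically least, so that $y\mapsto m(y)$ and $y\mapsto\gamma_{u(y)}(y)$ are Borel. The technical core is an \emph{escape estimate}. Since $\beta<\infty$ in both parts, let $k_\beta$ be the positive eigenfunction of $\CL_{H,\beta}$ from Theorem~\ref{thm:RPF}. Using $\CF_{H,\beta}^n(\nu)(a)=\int_K\CS_{H,\beta}^n(a)\,d\nu$ (which follows from Lemma~\ref{lem:compute.tr} and the pushforward description of $\CF_{H,\beta}$) and the fact that, by Lemma~\ref{lem:compare S and L}\eqref{lem:S=L in K-C} and forward invariance of $G$, $\CS_{H,\beta}^m(a)(z)=\CL_{H,\beta}^m(a)(z)$ for $z\in G$, one keeps in the itinerary expansion of $\CS_{H,\beta}^n(k_\beta)(y)$ only the trajectories that first run along $u(y)$ into $G$ and then move freely; bounding $1\le e\le d$ and $h_j^{-\beta}\ge q_0:=\min_{j}\inf_K h_j^{-\beta}>0$ on the first $m(y)$ steps yields
\[
\CS_{H,\beta}^n(k_\beta)(y)\ \ge\ (q_0/d)^{m(y)}\,\rho(\beta)^{\,n-m(y)}\,\min_K k_\beta\qquad(n\ge m(y)).
\]
Choosing $m_0$ with $\nu(\{m(\cdot)\le m_0\})>0$ for the nonzero measure $\nu$ at hand (possible since these sets increase to $K$) and integrating, one gets a constant $c>0$ with $\CF_{H,\beta}^n(\nu)(k_\beta)\ge c\,\rho(\beta)^{\,n-m_0}$ for all $n\ge m_0$.

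Part (i) then follows at once: with $\nu=\tau$ and $\rho(\beta)>1$ one would have $\tau(k_\beta)\ge\CF_{H,\beta}^n(\tau)(k_\beta)\ge c\,\rho(\beta)^{\,n-m_0}\to\infty$, which is absurd. Hence there is no (K2)-state and $K_\beta(\CT_\Gamma)=K_\beta(\CO_\Gamma)=\emptyset$.

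For part (ii), $\rho(\beta_c)=1$. Existence: $\CL_{H,\beta_c}^*(\tau_{\beta_c})=\tau_{\beta_c}$, so Proposition~\ref{prop:inf=eigen} gives (K1) and (K2), and Theorem~\ref{thm:LN} produces a $(\sigma,\beta_c)$-KMS state on both $\CT_\Gamma$ and $\CO_\Gamma$. Next I would show $\tau_{\beta_c}(C\IFS)=0$. If not, $\tau_{\beta_c}$ has an atom at some $c_0\in C\IFS$, hence — by Lemma~\ref{lem:point.mass.orbit} and the escape condition — also at some $y\in O(c_0)\cap G$; put $\mu=\tau_{\beta_c}|_{O(y)}$, a nonzero atomic measure on the countable set $O(y)$. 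Since $\gamma_j(O(y))\subseteq O(y)$, the measure $\CL_{H,\beta_c}^*(\mu)$ is supported on $O(y)$, while $\CL_{H,\beta_c}^*(\mu)\le\CL_{H,\beta_c}^*(\tau_{\beta_c})=\tau_{\beta_c}$; therefore $\CL_{H,\beta_c}^*(\mu)\le\mu$, so $(\CL_{H,\beta_c}^*)^n(\mu)$ decreases, and since it converges to $\mu(k_{\beta_c})\tau_{\beta_c}$ by Theorem~\ref{thm:RPF} we obtain $\mu(k_{\beta_c})\tau_{\beta_c}\le\mu$ with $\mu(k_{\beta_c})>0$. This forces $\tau_{\beta_c}$ to be supported on $O(y)\subseteq K\setminus C\IFS$, contradicting the atom at $c_0$. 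With $\tau_{\beta_c}(C\IFS)=0$, Proposition~\ref{prop:inf=eigen} shows the KMS state above is of infinite type.

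For uniqueness, let $\tau$ be any (K2)-state; by the decomposition of a KMS state into finite and infinite types it suffices to treat those two cases. There are no finite-type KMS states: if $\tau=\sum_{n\ge0}\CF_{H,\beta_c}^n(\tau_0)$ weak* with $\tau_0\ne0$ a finite trace, the escape estimate with $\nu=\tau_0$ (and $\rho(\beta_c)=1$) gives $\CF_{H,\beta_c}^n(\tau_0)(k_{\beta_c})\ge c>0$ for all large $n$, so $\sum_n\CF_{H,\beta_c}^n(\tau_0)(k_{\beta_c})=\infty$, contradicting convergence. Hence $\tau=\CF_{H,\beta_c}(\tau)$ is of infinite type, so $\tau\le\CL_{H,\beta_c}^*(\tau)$ by Lemma~\ref{lem:compare S and L}\eqref{lem:S<L positive}, whence $\tau\le(\CL_{H,\beta_c}^*)^n(\tau)$ for all $n$; letting $n\to\infty$ via Theorem~\ref{thm:RPF} gives $\tau\le\tau(k_{\beta_c})\tau_{\beta_c}$, so that $\tau(C\IFS)\le\tau(k_{\beta_c})\tau_{\beta_c}(C\IFS)=0$. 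By Lemma~\ref{lem:compare F and L}\eqref{lem:F=L if tau(C)=0} this gives $\CF_{H,\beta_c}(\tau)=\CL_{H,\beta_c}^*(\tau)$, hence $\tau=\CL_{H,\beta_c}^*(\tau)$, and the uniqueness clause of Theorem~\ref{thm:RPF} forces $\tau=\tau_{\beta_c}$. Theorem~\ref{thm:LN} then gives uniqueness of the KMS state on $\CT_\Gamma$, and also on $\CO_\Gamma$ since $\tau_{\beta_c}$ satisfies (K1). I expect the escape estimate — making precise the itinerary bookkeeping for $\CS_{H,\beta}^n$ around the escape word, together with the Borel selection $y\mapsto u(y)$ — and, in (ii), the argument that $\tau_{\beta_c}$ puts no mass on $C\IFS$ via the decreasing sequence $(\CL_{H,\beta_c}^*)^n(\mu)$, to be the main difficulties; the remaining steps are routine applications of Theorems~\ref{thm:LN} and~\ref{thm:RPF} and Proposition~\ref{prop:inf=eigen}.
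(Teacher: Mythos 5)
Your proof is correct, and while it follows the paper's overall strategy (reduce to (K2)-states on $C(K)$ via Theorem~\ref{thm:LN}, then play the eigenfunction $k_\beta$ against the divergence of $\rho(\beta)^n$), the central technical device is genuinely different. The paper first establishes $\tau(C\IFS)=0$ by propagating an atom from $C\IFS$ into the escape set with Lemma~\ref{lem:point.mass.orbit} and blowing up $\CF_{H,\beta}^n(\delta_x)(k_\beta)=\rho(\beta)^nk_\beta(x)$, and only then splits into finite and infinite type, using $\CF_{H,\beta}^n=(\CL_{H,\beta}^*)^n$ away from $C\IFS$. Your uniform escape estimate $\CF_{H,\beta}^n(\nu)(k_\beta)\geq c\,\rho(\beta)^{n-m_0}$, valid for every nonzero positive measure, eliminates all (K2)-states at once when $\rho(\beta)>1$ and all finite-type states when $\rho(\beta)=1$, with no preliminary reduction to $\tau(C\IFS)=0$. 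This buys a cleaner treatment of the finite-type case at $\beta=\beta_c$: the paper there invokes Equation \eqref{eq:DivSeriesFinType}, whose derivation requires $\tau_0(C\IFS)=0$, something the paper only obtains from the $\rho(\beta)>1$ blow-up; your estimate sidesteps that issue. The price is the extra bookkeeping you flag ($\CF_{H,\beta}^n(\nu)(a)=\int\CS_{H,\beta}^n(a)\,d\nu$ for Borel iterates via the pushforward description, Borel selection of the escape word), all of which checks out since $C\IFS$ is finite and the escape sets $\{m\le m_0\}$ are Borel and increase to $K$. Conversely, for $\tau(C\IFS)=0$ at $\beta=\beta_c$ the paper is shorter: for an infinite-type $\tau$ it integrates the nonnegative function $\CL_{H,\beta}(k_\beta)-\CS_{H,\beta}(k_\beta)$ against $\tau$, gets $0$, and reads off $\tau(C\IFS)=0$ directly; your two-stage argument (the decreasing sequence $(\CL_{H,\beta_c}^*)^n(\mu)$ for $\mu=\tau_{\beta_c}|_{O(y)}$, then $\tau\le\tau(k_{\beta_c})\tau_{\beta_c}$) is longer but sound, and could be replaced wholesale by the paper's identity. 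The remaining steps (existence via Proposition~\ref{prop:inf=eigen}, uniqueness via Theorem~\ref{thm:RPF}) coincide with the paper's.
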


\begin{proof}
	\eqref{prop:beta < beta_c} Recall that $\beta<\beta_c$ is equivalent to $\rho(\beta)>1$. Fix $\tau$ satisfying (K2) of Theorem~\ref{thm:LN}. Let us first show that $\tau(C(\gamma_1,\ldots,\gamma_n))=0$. As $C\IFS$ is finite, if we suppose that $\tau(C(\gamma_1,\ldots,\gamma_n))>0$ then $\tau$ would have point mass at a point $y\in C$. Using the escape condition, take $x\in O(y)$ such that $O(x)\cap C(\gamma_1,\ldots,\gamma_n)=\emptyset$. By Lemma \ref{lem:point.mass.orbit}, we have that $\tau(\{x\})>0$. Let $k:=k_\beta$ be given as in Theorem \ref{thm:RPF}, then
	\begin{align*}
	\tau(k)\geq\tau(\{x\})\CF_{H,\beta}^n(\delta_x)(k)&=\tau(\{x\})(\CL_{H,\beta}^{*})^n(\delta_x)(k)=\tau(\{x\})\delta_x(\CL_{H,\beta}^n(k))\\
	&=\tau(\{x\})\delta_x(\rho(\beta)^n k)=\rho(\beta)^n \tau(\{x\}) k(x) \xrightarrow{n\to\infty} \infty
	\end{align*}
	which is a contradiction.
	
	Now, if $\tau$ is of infinite type with respect to $(H,\beta)$ then, by Proposition \ref{prop:inf=eigen}, $\CL_{H,\beta}^{*}(\tau)=\tau$. Let us see that this gives a contradiction. Let $\tau_{\beta}$ and $k_{\beta}$ as in Theorem \ref{thm:RPF}, then by this same theorem, $\rho(\beta)^{-n}\tau=\rho(\beta)^{-n}(\CL_{H,\beta}^{*})^n(\tau)$ converges to $\tau(k_\beta)\tau_\beta$ in the weak*-topology. This implies that $\tau(k_\beta)=0$. On the other hand, because $k_\beta>0$ and $K$ is compact, there exists a real number $c>0$ such that $k_\beta\geq c$ and hence $\tau(k_\beta)\geq c>0$ arriving at contradiction.
	
	If $\tau$ is of finite type with respect to $(H,\beta)$ then $\tau=\sum_{n=0}^{\infty}\CF_{H,\beta}^n(\tau_0)$ for a finite trace $\tau_0$. In fact $\tau_0\in C(K)^*_+$ since we are assuming that $\tau$ satisfies (K2). Observe that $\tau_0(C\IFS)=0$, so by Lemma \ref{lem:compare F and L}\eqref{lem:F=L if tau(C)=0}, $\CF_{H,\beta}^n(\tau_0)=(\CL_{H,\beta}^{*})^n(\tau_0)$ for all $n$. Now, applying $\tau$ in $k_\beta$, we have
	\begin{equation}
	\begin{split} \label{eq:DivSeriesFinType}
		\tau(k_\beta)&=\sum_{n=0}^{\infty}\CF_{H,\beta}^n(\tau_0)(k_\beta)=\sum_{n=0}^{\infty}(\CL_{H,\beta}^{*})^n(\tau_0)(k_\beta)=\sum_{n=0}^{\infty}\tau_0(\CL_{H,\beta}^n(k_\beta))\\
		&= \sum_{n=0}^{\infty}\tau_0(\rho(\beta)^n k_\beta)=\tau_0(k_\beta)\sum_{n=0}^{\infty}\rho(\beta)^n=\infty 
	\end{split}
	\end{equation}
	so that we don't have a convergence in the weak*-topology, which is a contradiction.
	
	\eqref{prop:beta = beta_c} Now, we recall that $\beta=\beta_c$ is equivalent to $\rho(\beta)=1$. We first notice that Equation \eqref{eq:DivSeriesFinType} is also valid for $\rho(\beta)=1$ so that we do not have KMS states of finite type in this case. By Proposition \ref{prop:inf=eigen} and Theorem \ref{thm:LN}, if $\tau$ satisfies $\CL_{H,\beta}^{*}(\tau)=\tau$ then it extends us a KMS state, which is necessarily of infinite type, both on $\CT_\Gamma$ and $\CO_\Gamma$.
		
	Finally, we have to show that the restriction $\tau=\varphi|_{C(K)}$ of a KMS state of infinite type $\varphi$ satisfies $\CL_{H,\beta}^{*}(\tau)=\tau$. This follows from Proposition \ref{prop:inf=eigen} once we show that $\tau(C\IFS)=0$. For $k_\beta$ the eigenfunction of $\CL_{H,\beta}$, we have
	\begin{align*}
		0&=\tau(k_\beta)-\CF_{H,\beta}(\tau)(k_\beta)=\tau(k_\beta-\CS_{H,\beta}(k_\beta))=\tau(\mathcal{L}_{H,\beta}(k_\beta)-\CS_{H,\beta}(k_\beta))\\
		&=\int_K\sum_{j=1}^d\left(1-\frac{1}{e(\gamma_j(x),x)}\right)h_j^{-\beta}(\gamma_j(x))k_\beta(\gamma_j(x))d\tau(x)
	\end{align*}
	which implies that $\tau(C\IFS)=0$ because $C\IFS$ is finite and $h_jk_\beta>0$ for all $j=1,\ldots,d$. Hence $\CL_{H,\beta}^{*}(\tau)=\tau$. The uniqueness of such $\tau$ is given by Theorem \ref{thm:RPF}.
\end{proof}

We join the partial results to state the main theorem of this paper.

\begin{theorem}\label{thm:KMS}
	Let $\Gamma=\IFS$ be a contractive iterated function system with attractor $K$ satisfying the finite branches condition, where $d\geq 2$. Let $H=\{h_1,\ldots,h_d\}$ be a family of continuous functions on $K$ such that $h_j>1$ and $\log h_j$ is Dini-continuous for every $j=1,\ldots,d$. Suppose that $H$ is compatible and let $\sigma$ be the generalized action on $\CO_\Gamma$ and $\CT_\Gamma$ given by $H$. Let $\beta_c$ be the inverse critical temperature for $H$.
	\begin{enumerate}[(i)]
		\item If $\beta\in (\beta_c,\infty]$, then $K_\beta(\CO_X)_i=K_\beta(\CT_X)_i=\emptyset$. Moreover there is a one-to-one correspondence between the extreme points of $K_\beta(\CO_X)$ and the points of $B(\gamma_1,\ldots,\gamma_d)$ and one-to-one correspondence between the extreme points of $K_\beta(\CT_X)$ and the elements of $K$.
	\end{enumerate}
	Assuming also that $\Gamma$ satisfies the escape condition:
	\begin{enumerate}[(i)]\setcounter{enumi}{1}
		\item If $\beta\in(0,\beta_c)$ then $K_\beta(\CO_X)=K_\beta(\CT_X)=\emptyset$.
		\item If $\beta=\beta_c$ then there is a unique $(\sigma,\beta)$-KMS state both on $\CO_X$ and $\CT_X$, which is of infinite type and is given by the unique state $\tau\in C(K)^*$ such that $\CL_{H,\beta}^{*}(\tau)=\tau$.
	\end{enumerate}
\end{theorem}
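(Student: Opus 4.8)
The plan is to assemble Theorem~\ref{thm:KMS} directly from Propositions~\ref{p:KMS.beta.grande} and \ref{p:KMS.beta.pequeno}, which between them already treat the three temperature regimes; no new computation is required beyond recording which earlier result supplies which clause. Throughout I would first note that the hypotheses of the theorem ($\Gamma$ contractive, finite branch condition, $d\geq 2$, $H$ compatible with $h_j>1$ and $\log h_j$ Dini-continuous) are precisely what is needed to invoke the machinery set up earlier: compatibility and the finite branch condition make Proposition~\ref{prop:infinitesimal.generator} and Lemma~\ref{lemma:FinBranchIdeal} applicable; $h_j>1$ gives the positive energy condition (Lemma~\ref{lem:positive.energy}), hence access to Theorem~\ref{thm:LN}; and Dini-continuity of $\log h_j$ makes $\rho$ strictly decreasing with a unique zero-crossing at $\beta_c$ (Corollary~\ref{cor:sp.rad}) and provides the Ruelle--Perron--Frobenius input (Theorem~\ref{thm:RPF}).

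For part~(i), with $\beta\in(\beta_c,\infty]$ — equivalently $\rho(\beta)<1$ — I would apply Lemma~\ref{lem:betagrande}\eqref{i:noinf} to conclude $K_\beta(\CO_\Gamma)_i=K_\beta(\CT_\Gamma)_i=\emptyset$, so every KMS state is of finite type. Then Proposition~\ref{p:KMS.beta.grande}\eqref{i:T beta grande} gives an extreme-point-preserving bijection between $K_\beta(\CT_\Gamma)$ and $S(C(K))$, whose extreme points are the pure states, i.e.\ the point evaluations indexed by $K$; and Proposition~\ref{p:KMS.beta.grande}\eqref{i:O beta grande} restricts this to a bijection between $K_\beta(\CO_\Gamma)$ and the states on $C(K)$ vanishing on $J_X$. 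By Lemma~\ref{lemma:FinBranchIdeal} such states are supported on $B\IFS$, which is finite under the finite branch condition, so its extreme points are exactly the Dirac masses $\delta_y$, $y\in B\IFS$. This yields the claimed parametrizations.

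Parts~(ii) and~(iii), under the additional escape condition, are then nothing more than Proposition~\ref{p:KMS.beta.pequeno}\eqref{prop:beta < beta_c} and \eqref{prop:beta = beta_c} respectively, after recalling that $\beta<\beta_c\iff\rho(\beta)>1$ and $\beta=\beta_c\iff\rho(\beta)=1$. I do not expect a genuine obstacle here: the content has been proved in the preceding sections, and the only care needed is bookkeeping — matching the notation $\CO_X,\CT_X$ of the statement with $\CO_\Gamma,\CT_\Gamma$, and confirming that the blanket hypothesis "$\log h_j$ Dini-continuous" stated just before the analysis of the three cases is in force so that $\beta_c$ is well-defined. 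If anything is mildly delicate it is making explicit, in part~(iii), that a KMS state of infinite type restricts on $C(K)$ to an eigenmeasure of $\CL_{H,\beta_c}^{*}$ (which is where the argument $\tau(C\IFS)=0$ in Proposition~\ref{p:KMS.beta.pequeno} is used), but that too is already carried out and only needs to be cited.
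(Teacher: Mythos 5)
Your proposal is correct and matches the paper's proof, which likewise derives the theorem directly from Propositions~\ref{p:KMS.beta.grande} and \ref{p:KMS.beta.pequeno}; your additional bookkeeping about which hypotheses feed which earlier result is accurate but not strictly needed.
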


\begin{proof}
	The theorem follows from Proposition~\ref{p:KMS.beta.grande} and \ref{p:KMS.beta.pequeno}.
\end{proof}

\begin{example}[Gauge action]
	For an iterated function system satisfying the conditions of Theorem \ref{thm:KMS}, if we define $h_j(x)=e$ for all $x\in K$ and all $j=1,\ldots,n$, then $h_j>1$ and $\log h_j$ is Dini-continuous for every $j=1,\ldots,d$. This means that Theorem~\ref{thm:KMS} is a direct generalization of \cite[Theorem 6.6]{IKW} dealing with KMS states for the gauge acion on $\CO_\Gamma$. In this case $\beta_c=\log d$. In \cite{IKW}, they do not explore KMS states on $\CT_\Gamma$. Applying Theorem~\ \ref{thm:KMS}, for $\log d$, again we a unique KMS state on $\CT_\Gamma$. And for $\beta > \log d$, the KMS states are parametrized by the points of $K$.
\end{example}

\begin{example}[Tent map]
	Let $K=[0,1]$ and consider the maps $\gamma_1(y)=1/2y$ and $\gamma_2(y)=1-1/2y$. Then $\Gamma=(\gamma_1,\gamma_2)$ is an iterated function system such that $C(\gamma_1,\gamma_2)=\{1\}$ and $B(\gamma_1,\gamma_2)=\{1/2\}$. As observed in \cite[Example 6.8]{IKW}, this system satisfies the escape condition because $O(1)\cap C(\gamma_1,\gamma_2)$. If we let $H=\{h_1,h_2\}$ such that $h_j>1$ and $\log h_j$ is Dini-continuous for every $j=1,2$, and $h_1(1/2)=h_2(1/2)$, we can apply Theorem \ref{thm:KMS}.
	
	Since $\gamma_1$ and $\gamma_2$ are the inverse branches of the tent map, $\CO_\Gamma$ can be written as an Exel's crossed product by endomorphism \cite[Theorem~3.22]{Gil1}. Because $h_1(1/2)=h_2(1/2)$, we can define a function $h\in C(K)$ by applying $h_1$ on $[0,1/2]$ and $h_2$ on $[1/2,1]$. We could then try to apply \cite[Theorem 9.6]{E2}, however, because of the branched point a key hypothesis of \cite[Theorem 9.6]{E2} is not satisfied, namely, the corresponding conditional expectation is not of finite type. In a sense, \cite[Theorem 9.6]{E2} does not detect KMS states arising from branched points.
\end{example}

\begin{example}[Graph separation condition]
	If the iterated function system $\Gamma$ satisfies de graph separation condition, then $\CO_\Gamma$ is isomorphic the Cuntz algebra $\CO_d$ \cite[Proposition 4.1]{KW1}. Usually, the study of KMS states on Cuntz algebras is related with measures on the full shift $\Sigma_d$ \cite[Section 4.2]{IK}. In our case, we case use the code map of Proposition \ref{prop:CodeMap} to see $C(K)$ as a subalgebra of $\CO_d$ \cite[Proposition~3.17]{Gil1}, so that KMS states on $\CO_d$ will be given by measures on $K$, which can be for instance the unit interval \cite[Example~4.3]{KW1} or the Sierpiński gasket \cite[Example~4.5]{KW1}. In fact, the proof of the Ruelle-Perron-Frobenius given on \cite{FL} relies on the code map and relates measures of $K$ and $\Sigma_d$.
\end{example}

\bibliographystyle{abbrv}
\bibliography{ref}

\end{document}